\theoremstyle{plain}
\newtheorem{thm}{Theorem}[section]
\newtheorem{cor}[thm]{Corollary}
\newtheorem{lem}[thm]{Lemma}
\newtheorem{prop}[thm]{Proposition}
\newtheorem{rem}[thm]{Remark}
\newtheorem{defi}[thm]{Definition}
\def\sqr#1#2{{\vcenter{\vbox{\hrule height.#2pt
              \hbox{\vrule width.#2pt height#1pt \kern#1pt \vrule
width.#2pt}
              \hrule height.#2pt}}}}
\def\be{\begin{equation}}
\def\ee{\end{equation}}
\def\ga{{\gamma}}
\def\ep{{\epsilon}}
\def\Sp{{\mathrm {Sp}}}
\def\lb{\label}
\def\ga{{\gamma}}
\def\R{{\Bbb R}}
\def\<{{\langle}}
\def\>{{\rangle}}
\def\no{\noindent}
\def\bs{\bigskip}
\def\dim{\hbox{\rm dim$\,$}}
\def\({\Big (}
\def\){\Big )}
\def\[{\Big[}
\def\]{\Big]}
\def\be{\begin{equation}}
\def\bel{\begin{equation}\label}
\def\ee{\end{equation}}
\def\bea{\begin{eqnarray}}
\def\eea{\end{eqnarray}}
\def\bt{\begin{theorem}}
\def\et{\end{theorem}}
\def\bc{\begin{corollary}}
\def\ec{\end{corollary}}
\def\bl{\begin{lemma}}
\def\el{\end{lemma}}
\def\bp{\begin{proposition}}
\def\ep{\end{proposition}}
\def\br{\begin{remark}}
\def\er{\end{remark}}
\def\ba{\begin{array}}
\def\ea{\end{array}}
\def\bd{\begin{definition}}
\def\ed{\end{definition}}
\begin{document}

\title{\bf Quantitative Linear Stability Analysis of Elliptic Relative Equilibria in the Planar $N$-Body Problem}
\author{Xijun Hu\thanks{E-mail:xjhu@sdu.edu.cn}
\quad Yuwei Ou\thanks{E-mail:ywou@sdu.edu.cn}
\quad Jiexin Sun \thanks{E-mail:202120273@mail.sdu.edu.cn.}
\\ \\
School of Mathematics, Shandong University
Jinan, Shandong 250100\\
The People's Republic of China
}
\date{}
\maketitle
\begin{abstract}
An elliptic relative equilibrium (ERE) is a special solution of the planar $N$-body problem generated by a central configuration. Its linear stability depends on the eccentricity $e$ and the masses of the bodies. However, for $e>0$, the variational equations become non-autonomous and highly complex, particularly near $e=1$, where the system exhibits a singularity. This complicates the stability analysis as $e$ approaches one, making it challenging to derive a rigorous quantitative estimate for the stable region across $e\in[0,1)$.
In this work, we address this problem. Using trace formulas for the non-degenerate Hamiltonian system of EREs, we establish an upper bound ensuring non-degeneracy for all $e\in[0,1)$. As key applications, we provide explicit stability estimates for the Lagrange, Euler, and regular $(1+n)$-gon EREs over the full range of eccentricity.
\end{abstract}

\bs

\no{\bf AMS Subject Classification:} 37J25,  70F10,   37J45, 53D12

\bs

\no{\bf Key Words: $N$-body problem, linear stability, elliptic relative equilibrium, Maslov-type index, Trace formula}

\tableofcontents


\subsection*{Notation}
\small{
The following notations will be used without further comments throughout the paper.
\begin{itemize}
\item  We denote the real number set, complex number set, the non-negative integer
set and the unit circle by $\mathbb{R}$, $\mathbb{C}$, $\mathbb{N}$ and $\mathbb{U}$ respectively.

\item This derivative symbol $'$ means $\frac{d}{dt}$ and $\cdot$ means $\frac{d}{d\theta}$ throughout the paper.
\item Let $I_j$ be the identity matrix on $\mathbb{R}^j$ and
$J_{2j}=\left( \begin{array}{cccc}0_j& -I_j \\
                                  I_j& 0_j \end{array}\right)$, $\mathbb{J}_n=diag(J_2,...,J_2)_{2n\times2n}$. For simplicity, sometimes we omit the sub-indices of $I$ and $\mathbb{J}$,
but can be easily found out through the context.
\item Given a function $f: \mathbb{R}^{k} \to \mathbb{R}$ and matrix $C$, $\nabla f$ represents the gradient of $f$ with respect to the Euclidean inner product expressed as a column vector and
$D^2 f$ denote the Hessian of $f$, $C^{\mathcal{T}}$ denote the transpose matrix of $C$.
\item We denote by $GL(\R^{2n})$ the invertible matrix group, by $\mathcal{S}(2n)$ the symmetric matrix group, by $O(2n)$ the orthogonal matrix group in $\mathbb{R}^{2n}$, and by
$$   \Sp(2n)=\{M\in GL(\R^{2n}),  M^TJM=J\}  $$
the symplectic group.
\item As in \cite{L02}, for $M_1=\left( \begin{array}{cccc}A_1& A_2 \\
A_3 & A_4 \end{array}\right)$,  $M_2=\left( \begin{array}{cccc}B_1& B_2 \\
B_3 & B_4 \end{array}\right)$, the symplectic sum $\diamond$ is defined by
\bea  M_1\diamond M_2=\left( \begin{array}{cccccccc}A_1& 0 &A_2 & 0 \\
                                                     0 & B_1& 0  & B_2 \\
                                                    A_3& 0 &A_4 & 0 \\
                                                     0 & B_3& 0  & B_4 \end{array}\right).\nonumber   \eea
\item In what follows we write $A\geq B$ for two linear symmetric
operators $A$ and $B$, if $A-B\geq 0$, i.e. $A-B$ possesses only non-negative eigenvalues and
write $A>B$, if $A-B>0$, i.e. $A-B$ possesses only positive eigenvalues.
\end{itemize}
}
\section{Introduction and main results}
Consider $N$ particles with masses $m_1,\cdots,m_N$. Let $q=(q_1,\cdots,q_N)\in \mathbb{R}^{2N}$ be the position vector in the configuration space and $p=(p_1,\cdots,p_N)\in \mathbb{R}^{2N}$ be the momentum vector. We will consider the situation with configuration space
$$
\Lambda=\{x=(x_1,\cdots,x_N)\in\mathbb{R}^{2N}\setminus\triangle:
\sum_{i=1}^Nm_ix_i=0 \},$$
where $\triangle=\{x\in\mathbb{R}^{2N}:\exists\, i\neq j,x_i=x_j \}$ is the collision set.

Let
\bea U(q)=\sum_{1\leq i< j\leq N} \frac{m_im_j}{\|q_i-q_j\|} \nonumber \eea
be the negative potential function defined on $\Lambda$, and the Newton's equations are
\bea m_iq''_i(t)=\frac{\partial
U}{\partial q_i}(q_1,...,q_N),\quad i=1,\cdots,N. \label{eq1.1} \eea
The corresponding Hamiltonian system of (\ref{eq1.1}) has the form
\bea\label{eq H}
x'(t)=J_{4N}\nabla H(x)
\eea
with $x=(p,q)^{\mathcal{T}}\in \mathbb{R}^{4N}$, $J_{4N}=\left(\begin{array}{cc}
    O_{2N} & -I_{2N} \\
    I_{2N} & O_{2N}
\end{array}\right)$ and Hamiltonian functional
\bea
H(p,q)=\sum_{j=1}^N\frac{||p_{j}||^2}{2m_{j}}-U(q),\nonumber
\eea
where $I_{2N}$ is the $2N\times 2N$ identical matrix.
For a periodic solution $x(t)$ of (\ref{eq H}), the corresponding fundamental solution matrix $\xi$ satisfies the linearized Hamiltonian system at $x$
\bea\label{ga}
\xi'(t)=J_{4N}D^{2}H(x(t))\xi(t),\,\ \xi(0)=I_{4N},
\eea

A planar central configuration of $n$ particles with center of mass at original point is formed by a  $n$-position vector $a=(a_1,...,a_n)\in \mathbb{R}^{2n}$ which satisfies
\bea -\lambda \mathcal{M}a=\nabla U(a), \label{CC}\eea
for constant $\lambda=U(a)/\cal{I}(a)>0$, where $\mathcal{M}=diag(m_{1},m_{1},m_{2},m_{2}\ldots,m_{n},m_{n})$,
$\cal{I}(a)=\sum m_j\|a_j\|^2$ is the moment of inertia. Let $\Sigma=\{x\in \Lambda : \cal{I}(x)=1\}$, that $a=(a_{1},\cdots,a_{n})$ satisfies (\ref{CC}) implies that $a$ is a critical point of $U|_{\Sigma}$. It is well known that a planar central configuration of the $n$-body problem gives rise to a solution of
(\ref{eq H}) where each particle moves on a specific Keplerian orbit while the totality of the particles move on a homographic motion. More precisely,
the homographic solution generated by the central configuration $a$ is
\bea\label{ERE}
x(t)=r(t)\mathfrak{R}(\theta(t))a,\nonumber
\eea
where
$$
r(t)=\frac{\Omega^2/\lambda}{1+e\cos\theta(t)}, \ \ r^{2}(t)\theta'(t)=\Omega,
$$
and $\mathfrak{R}(\theta(t))=diag(R(\theta(t)),\ldots, R(\theta(t)))$, $R(\theta(t))=\left( \begin{array}{cc} \cos \theta(t) & -\sin \theta(t)\\
\sin \theta(t) & \cos \theta(t)\end{array}\right)$, $\Omega\neq0$ is the angular momentum.
If the Keplerian orbit
is elliptic then the solution is an equilibrium in pulsating coordinates so we call this solution an elliptic
relative equilibrium (ERE for short), and a relative equilibrium (RE for short) in case $e=0$ (cf. \cite{MS05}).
%

In order to study the linear stability of the ERE, Meyer and Schmidt in \cite{MS05} introduced a useful linear
transformation which reduces system (\ref{ga}) into two parts symplectically, one is corresponding to the Keplerian motion, and the other called essential part $B(\theta)$ is
closely related to the linear stability of the ERE. Let $\ga(\theta)$ be the fundamental solution of essential part, that is
\bea \dot{\ga}(\theta)=JB(\theta)\ga(\theta),  \quad  \ga(0)=I_{2n}. \label{ga3}
\eea
The monodromy matrix $\ga(2\pi)$ is called spectrally stable if all eigenvalues of $\ga(2\pi)$ belong to the
unit circle $\mathbb{U}$ in the complex plane $\mathbb{C}$. $\ga(2\pi)$ is called linearly stable if it is spectrally stable and semi-simple.
While $\ga(2\pi)$ is called hyperbolic if no eigenvalue of $\ga(2\pi)$ is on $\mathbb{U}$. The ERE is called spectrally stable
(linearly stable, hyperbolic, resp.) if the monodromy matrix $\ga(2\pi)$ is spectrally stable(linearly stable, hyperbolic, resp.).

There are many famous and interesting EREs in planar $N$-body problem which have been studied for a long time. We will focus on their linear stability. Lagrange solution in planar 3-body problem is found by Lagrange in 1772, which forms a equilateral triangle all the time (See Figure \ref{fig:Lagrange}).
\begin{figure}[H]
    \centering
    \includegraphics[width=5.4cm]{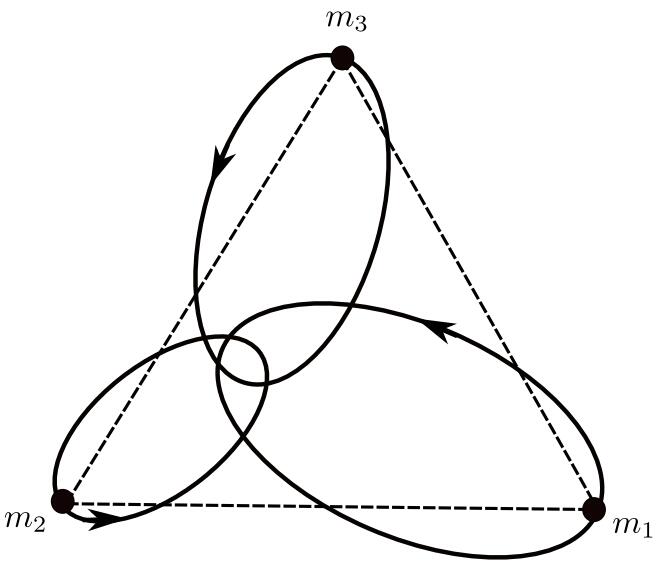}
    \caption{The Lagrange solution in planar 3-body problem.}
    \label{fig:Lagrange}
\end{figure}
The study on the stability of Lagrange solution has a long history. From Gascheau \cite{G43} in 1843 for circle Lagrange solution to Danby\ \cite{D64} in 1964 for elliptic case, the stability of Lagrange solution can be described by two parameters, mass parameter
\bea
\beta_{L}=\frac{27(m_1m_2+m_1m_3+m_2m_3)}{(m_1+m_2+m_3)^2} \in [0,9]\nonumber
\eea
and eccentricity $e\in [0,1)$.

Euler solution is another type long-historical ERE in planar 3-body problem discovered by Euler in 1767, which keeps collinear all the time (See Figure \ref{fig:Euler} ).
\begin{figure}[H]
    \centering
    \includegraphics[width=6.8cm]{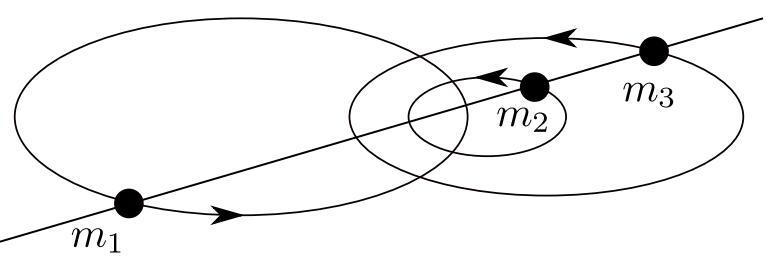}
    \caption{The Euler solution in planar 3-body problem.}
    \label{fig:Euler}
\end{figure}
The stability of Euler solution can be described by
\bea
\beta_{E}=\frac{m_1(3\mu^2+3\mu+1)+m_3\mu^2(\mu^2+3\mu+3)}{\mu^2+m_2[(\mu+1)^2(\mu^2+1)-\mu^2]} \in [0,7]\nonumber
\eea
and eccentricity $e\in [0,1)$, where $\mu$ is the unique positive solution of the Euler quintic polynomial equation, decided by the Euler configuration. The details of Euler solution can be found in \cite{ZL17}.

For $N>3$, there is a kind of ERE with nice symmetry called the regular $(1+n)$-gon central configuration solution, which has $n$ unit masses at the
vertices of a regular $n$-gon and a body of mass $m$ at the center (See Figure \ref{fig:1+n-gon} ). The stability of the regular $(1+n)$-gon can be described directly by
\bea
\beta_{M}=\frac{1}{m}\in (0,\infty),\nonumber
\eea
and eccentricity $e\in [0,1)$. The research of the linear stability of the regular $(1+n)$-gon ERE with $e=0$ was first started by J.C.Maxwell in his study on the stability of
Saturn's rings (cf. \cite{M83}).
\begin{figure}[H]
    \centering
    \includegraphics[width=5.4cm]{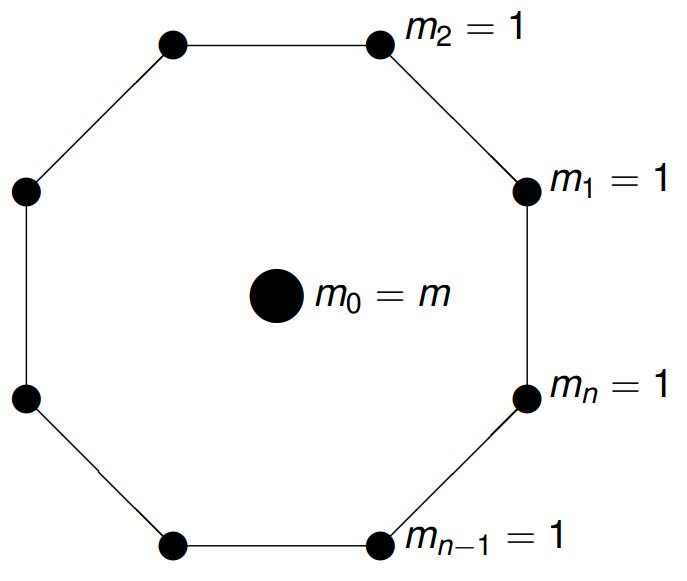}
    \caption{The regular $(1+n)$-gon solution in planar $N$-body problem.}
    \label{fig:1+n-gon}
\end{figure}

For Lagrange solution and Euler solution, the $4$-dimensional essential part  $B(\theta)=B_{\beta,e}(\theta)$ depends on the mass parameter $\beta=\beta_{L}, \beta_{E}$ and the eccentricity $e\in[0,1)$. As in \cite{MS05} the parameter plane $(\beta,e)$ can be partitioned into several parameter regions with the following notations:

\vspace{5pt}
\ $\bullet$\ \  Elliptic-Elliptic (EE): If $\gamma_{\beta,e}(2\pi)$ possesses eigenvalues only in $\mathbb{U}\setminus\mathbb{R}$;

\vspace{3pt}
\ $\bullet$\ \  Elliptic-Hyperbolic (EH): If $\gamma_{\beta,e}(2\pi)$ possesses eigenvalues both in $\mathbb{U}\setminus\mathbb{R}$ and $\mathbb{R}\setminus \{0,\pm 1\}$;

\vspace{3pt}
\ $\bullet$\ \  Hyperbolic-Hyperbolic (HH): If $\gamma_{\beta,e}(2\pi)$ possesses eigenvalues only in $\mathbb{R}\setminus\{0,\pm 1\}$;

\vspace{3pt}
\ $\bullet$\ \  Complex-Saddle (CS): If $\gamma_{\beta,e}(2\pi)$ possesses eigenvalues only in $\mathbb{C}\setminus(\mathbb{U}\cup \mathbb{R})$.

\vspace{5pt}
In \cite{MSS06}, Mart\'{\i}nez, Sam\`{a} and Sim\'{o} obtained the complete bifurcation diagrams numerically to describe the parameter regions of Lagrange solution and Euler solution and beautiful figures were drawn there for the full $(\beta,e)$ range. Hu and Sun firstly introduced Maslov-type index into the study of stability for Lagrange orbit in \cite{HS10}. Later in \cite{HLS14}, Hu, Long and Sun further developed this method and gave a complete analytically description of the bifurcation diagrams of \cite{MSS06} for Lagrange orbit. Zhou and Long in \cite{ZL17} study the stability of Euler solution with the similar method. Hu and Ou in \cite{HO16} develop the collision index to study the bifurcation diagram of Euler solution for the limit case (i.e $e\rightarrow1$).

For the regular $(1+n)$-gon ERE, $B(\theta)=B_{\beta,e}(\theta)$ is $(4n-4)$-dimensional and it can be decomposed into following form,
$$
B(\theta)=\hat{B}_1(\theta) \diamond\cdots\diamond \hat{B}_{[\frac{n}{2}]}(\theta).
$$
The expression of $\hat{B}_i(\theta), 1\leq i\leq{[\frac{n}{2}]}$ is given in (\ref{2.13}), it depends on mass parameter $\beta=\beta_{M}=1/m$ and eccentricity $e\in[0,1)$.
In \cite{M83}. Maxwell first proved that for $n\geq3$, it is linearly stable for sufficiently large $m$. But, Moeckel \cite{M92} found a mistake
in the calculation of Maxwell, then he corrected Maxwell's results and showed that the regular $(1+n)$-gon is linearly stable for sufficiently
large $m$ only when $n\geq7$. For $3\leq n\leq 6$, no matter how large the center mass $m$ is, it's not stable. Further, for $n\geq 7$, Roberts found a value $h_n$ which is proportional to $n^3$, and the regular $(1+n)$-gon is stable if and only if $m>h_n$ (cf. \cite{R98}). For other related works, please refer to \cite{VK07} and reference therein.
For the case $e>0$, the linear stability of the regular $(1+n)$-gon was first studied by Hu, Long and Ou in \cite{HLO20}, they showed that for $n\geq 8$
and any eccentricity $e\in[0,1)$, the regular $(1+n)$-gon ERE is linearly stable when the central mass $m$ is large enough. Later in \cite{OS22}, Ou and Sun also proved the regular $(1+7)$-gon is linearly stable when the central mass $m$
is large enough.

All the stability results above are the qualitative analysis. How to give a quantitative estimation for the stable region
of ERE through the rigorous analytical methods is a challenging problem. A useful method to estimate the parameter region of ERE is the trace formula. Hu, Ou and Wang in \cite{HOW15} and \cite{HOW19} developed the trace formula for Hamiltonian system and firstly used it to study the stability of elliptic relative equilibria quantitatively. They estimated the EE region for elliptic Lagrange solution and EH region for Euler solution with eccentricity $e$ less than some $e_0<0.34$.
But how to obtain the stable region over the full range $e\in[0,1)$ is still an open and difficult problem. Since near $e=1$, where the system exhibits a singularity, this complicates the stability analysis as $e$ approaches one, making it challenging to derive a rigorous quantitative estimate for the stable region
across $e\in[0,1)$.

In the present paper, we address this problem.
Our first observation is that when $\beta_{L}=\beta_{E}=0$, the essential part (\ref{ga3}) of the Lagrange solution and Euler solution can be reduced into
\bea
B(\theta)=B_{Kep}(e, \theta)=\left( \begin{array}{cccc} I_{2} & -J_{2} \\
J_{2} & I_{2}-\frac{\mathcal{R}_{kep}}{1+e\cos\theta}
\end{array}\right),\ \ \mathcal{R}_{kep}=\left( \begin{array}{cccc} 3 & 0 \\
0 & 0
\end{array}\right),\ \ \theta\in[0,2\pi]. \label{Kep}
\eea
in central configuration coordinate. Also, for the regular $(1+n)$-gon ERE, when
$\beta_{M}=0$ (\,i.e $m=+\infty$), $\hat{B}_i(\theta), 1\leq i\leq [\frac{n}{2}]$ which is given in (\ref{2.13}) can be decomposed into the form (\ref{Kep}).
In Section \ref{Sec2}, we will see that (\ref{Kep}) is precisely the representation of the linearized Kepler system in central configuration coordinate.
Therefore, the Lagrange solution, Euler solution and the regular (1 + n)-gon solution can be regarded as a perturbation of the linearized Kepler system.
Based on this observation, it's important to study the linear stability of the following general $2\mathfrak{n}$-dimensional linear Hamiltonian system
\bea\label{1.9e}
\dot{z}(\theta)=J_{2\mathfrak{n}}(B(\theta)+\sigma D(\theta))z(\theta), \quad z(0)=Sz(T),
\eea
where $S\in \Sp(2\mathfrak{n})\cap O(2\mathfrak{n}), D(\theta)\in\mathcal{S}(2\mathfrak{n}), \sigma\in\mathbb{C}$. In our application, we take $B(\theta)=B_{Kep}(e, \theta)$, the system (\ref{1.9e}) can be regarded as a perturbation of system ($\ref{Kep}$).
The system (\ref{1.9e}) is called non-degenerate, if it has only trivial zero solution. When the system is non-degenerate for $\sigma=0$, it's nature
to ask whether we can give an estimation for the upper bound of $|\sigma |$ such that the non-degeneracy preserves. This is closely related to the stability of system (\ref{1.9e}).
In our present paper, by the trace formula,
we will estimate the relative Morse index and Maslov-type index which is important in the study of the stability. Moreover, we can give an estimation for the upper bound such that the non-degeneracy of system (\ref{1.9e}) preserves over the full range $e\in[0,1)$. Consequently, some new stability regions for the Lagrange solution, Euler solution and the regular $(1+n)$-gon ERE are given. Precisely, We obtain the following main theorems.

Denote by $A|_{E_{\mathfrak{n}}(-I)}=-J_{2\mathfrak{n}}\frac{d}{dt}$ with domain
$$
E_{\mathfrak{n}}(-I)=\left\{z\in W^{1,2}([0,T];\mathbb{C}^{2\mathfrak{n}})\ |\ z(0)=-z(T) \right\}.
$$
where $B, D$ are bounded linear operators defined by $(Bz)(t) = B(t)z(t), (Dz)(t) = D(t)z(t)$ on $E_{\mathfrak{n}}(-I)$. Then $A|_{E_{\mathfrak{n}}(-I)}$ is a
self-adjoint operator with compact resolvent, moreover for $\sigma\in \rho(A)$, the resolvent
set of $A|_{E_{\mathfrak{n}}(-I)}$, $(A|_{E_{\mathfrak{n}}(-I)}-\sigma I_{2\mathfrak{n}})^{-1}$ is Hilbert-Schmidt. Our first theorem gives an estimation
of the relative Morse index $\mathcal{I}(A|_{E_{2}(-I)}-B_{Kep}, A|_{E_{2}(-I)}-B_{Kep}-\sigma D)$ and the Maslov-type index $i_{\omega}(\gamma)$ with $\omega=-1$. The relative Morse index and Maslov-type index  will be briefly introduced in Section \ref{sec4.1}.
%
%
%

\begin{thm}\label{thm1.1}
        If $D(A|_{E_{2}(-I_{2})}-B_{Kep})^{-1}$ has only real eigenvalues on $E_{2}(-I_{2})$, then for $(\sigma,e)$ satisfies
    \bea
|\sigma|<\frac{1}{\sqrt{f(e)}}, \nonumber
    \eea
we have $\mathcal{I}(A|_{E_{2}(-I_{2})}-B_{Kep},A|_{E_{2}(-I_{2})}-B_{Kep}-\sigma D)=0$ and $i_{-1}(\gamma_{\sigma,e})=2$, where $f(e)$ defined by (\ref{eq4.19}) is a specific smooth function with respect to $e\in [0,1)$.
Moreover, if for such $(\sigma,e)$, $i_1(\gamma_{\sigma,e})=0$ holds, then $\gamma_{\sigma,e}(T)$ is linear stable and
$$
\gamma_{\sigma,e}(T)\approx R(\vartheta_{1})\diamond R(\vartheta_{2})\ \ \text{with}\ \ \vartheta_{1}, \vartheta_{2}\in(\pi,2\pi),
$$
where $\approx$ denotes the symplectic similarity.
\end{thm}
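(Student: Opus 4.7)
The plan is to reduce Theorem \ref{thm1.1} to an eigenvalue estimate for the compact operator $K := D\bigl(A|_{E_{2}(-I_{2})}-B_{Kep}\bigr)^{-1}$ on $E_{2}(-I_{2})$, and then translate this into index information via the trace formula of \cite{HOW15,HOW19}. Since $(A|_{E_{2}(-I_{2})}-B_{Kep})^{-1}$ is Hilbert-Schmidt, so is $K$, and $\mathrm{Tr}(K^{2})$ is well-defined; the trace formula will evaluate it explicitly as $\mathrm{Tr}(K^{2})=f(e)$, where $f(e)$ is the smooth function written down in (\ref{eq4.19}). I would carry out this computation by first constructing the Green's kernel of $A|_{E_{2}(-I_{2})}-B_{Kep}$ from two linearly independent solutions of the linearised Kepler system (\ref{Kep}) in central-configuration coordinates, and then integrating this kernel against itself contracted with $D$.

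Once this identity is in hand, the non-degeneracy and index conclusions will follow from a spectral-flow argument. Under the reality hypothesis the eigenvalues $\{\lambda_{k}\}$ of $K$ are real, so Lidskii's theorem gives $\sum_{k}\lambda_{k}^{2}=\mathrm{Tr}(K^{2})=f(e)$, which forces $\max_{k}|\lambda_{k}|\le\sqrt{f(e)}$. Hence $|\sigma|<1/\sqrt{f(e)}$ keeps $-1$ outside the spectrum of $\sigma K$, so $A|_{E_{2}(-I_{2})}-B_{Kep}-t\sigma D$ is non-degenerate for every $t\in[0,1]$. No eigenvalue of this continuous family crosses zero, so by definition the spectral flow vanishes, i.e.\ $\mathcal{I}(A|_{E_{2}(-I_{2})}-B_{Kep},\,A|_{E_{2}(-I_{2})}-B_{Kep}-\sigma D)=0$. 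I would then apply the standard identification between the relative Morse index on $E_{2}(-I_{2})$ and the difference of Maslov-type $(-1)$-indices (cf.\ \cite{L02}) to conclude $i_{-1}(\gamma_{\sigma,e})=i_{-1}(\gamma_{0,e})$, and evaluate the $(-1)$-index of the unperturbed Kepler monodromy via its explicit fundamental matrix to obtain $i_{-1}(\gamma_{Kep})=2$.

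For the linear-stability conclusion, I would combine the extra hypothesis $i_{1}(\gamma_{\sigma,e})=0$ with $i_{-1}(\gamma_{\sigma,e})=2$ to pin down the conjugacy class of the monodromy in $\mathrm{Sp}(4)$. Using the splitting-number calculus of \cite{L02}, the four possibilities EE, EH, HH, CS recalled in the Introduction correspond to distinct $(i_{1},i_{-1})$ signatures; the pair $(0,2)$ is compatible only with the EE case, and moreover it forces both rotation angles into $(\pi,2\pi)$, because blocks with angles in $(0,\pi)$ would drop $i_{-1}$ below $2$. The principal obstacle is the first paragraph: producing a closed-form $f(e)$ that remains finite and sharp as $e\to 1^{-}$, where $B_{Kep}$ is singular through the factor $1/(1+e\cos\theta)$ and purely perturbative estimates fail. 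A secondary subtlety is verifying the relative-Morse-to-Maslov identification and Lidskii's identity in the Hilbert-Schmidt (rather than trace-class) setting that is needed here.
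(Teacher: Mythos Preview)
Your overall strategy --- bound the spectral radius of $K = D(A|_{E_2(-I)} - B_{Kep})^{-1}$ by $\sqrt{\mathrm{Tr}(K^2)}$, deduce non-degeneracy along the homotopy, and convert to Maslov-type index --- is the same skeleton the paper uses. However, there is a genuine gap: the function $f(e)$ in (\ref{eq4.19}) is \emph{not} $\mathrm{Tr}(K^2)$ on the full anti-periodic space $E_2(-I)$. The paper first exploits the $\mathcal{N}$-reversible symmetry of $B_{Kep}$ (Section~\ref{sec2}) to split $E_2(-I) = E_2^+(-I) \oplus E_2^-(-I)$ into two $K$-invariant subspaces, each isomorphic to a Lagrangian boundary-value problem on the half-period $[0,\pi]$. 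One then sets $f_\pm(e) = \mathrm{Tr}\bigl(\mathcal{F}^2|_{\tilde{E}^\pm_2(-I)}\bigr)$ separately and defines $f(e) = \max\{f_+(e), f_-(e)\}$. Working directly on $E_2(-I)$ as you propose would yield $\mathrm{Tr}(K^2) = f_+(e) + f_-(e)$, hence only the strictly weaker bound $|\sigma| < 1/\sqrt{f_+ + f_-}$, which does not prove the theorem as stated.

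The $\mathbb{Z}_2$ decomposition is also what makes the explicit computation tractable. On each piece $\tilde{E}^\pm_2(-I)$ the paper applies the Lagrangian-boundary trace formula of \cite{HOW19} (Theorem~\ref{th3.1} here), which reduces $\mathrm{Tr}(\mathcal{F}^2)$ to $\mathrm{Tr}(G_{1,\pm}^2) - 2\,\mathrm{Tr}(G_{2,\pm})$ for finite $4\times 4$ matrices built from the explicit Kepler fundamental solution $\gamma_{Kep}$ (Theorem~\ref{FunKep}) and the Lagrangian frames $Z_{0,\pm}, Z_{1,\pm}$; this is where the closed formulas (\ref{eq4.14})--(\ref{eq4.18}) come from. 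Your Green's-kernel route on the full space would have to reproduce this machinery and would still land on the sum rather than the max. The remaining ingredients you list --- Proposition~\ref{prop3.6} for the relative-Morse/Maslov identification, Lemma~\ref{lem4.2.1} (from \cite{HS10}) for $i_{-1}(\gamma_{0,e}) = 2$, and the normal-form argument of Theorem~\ref{thm3.5} for linear stability --- match the paper. (Minor slip: non-degeneracy requires $1$, not $-1$, to lie outside the spectrum of $\sigma K$.)
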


As applications, we will give specific formulas for the stable regions of the Lagrange solution and Euler solution.
\begin{thm}\label{thm1.2}
    Lagrange solution is linearly stable if
    \bea
\beta_{L}<9-(3-\frac{1}{\sqrt{f_{L}(e)}})^2,\  \forall e\in [0,1),\nonumber
    \eea
and Euler solution is elliptic-hyperbolic if
    \bea
\beta_{E}<\frac{1}{2\sqrt{f_{L}(e)}},\  \forall e\in [0,1),\nonumber
    \eea
    where $f_{L}(e)$ is an explicit function with respect to $e$, defined by (\ref{eqflag}).
    The following figure gives the estimation derived using these theorems.
\begin{figure}[H]
    \centering
    \begin{subfigure}[b]{0.41\textwidth}
        \includegraphics[height=7cm,width=\textwidth]{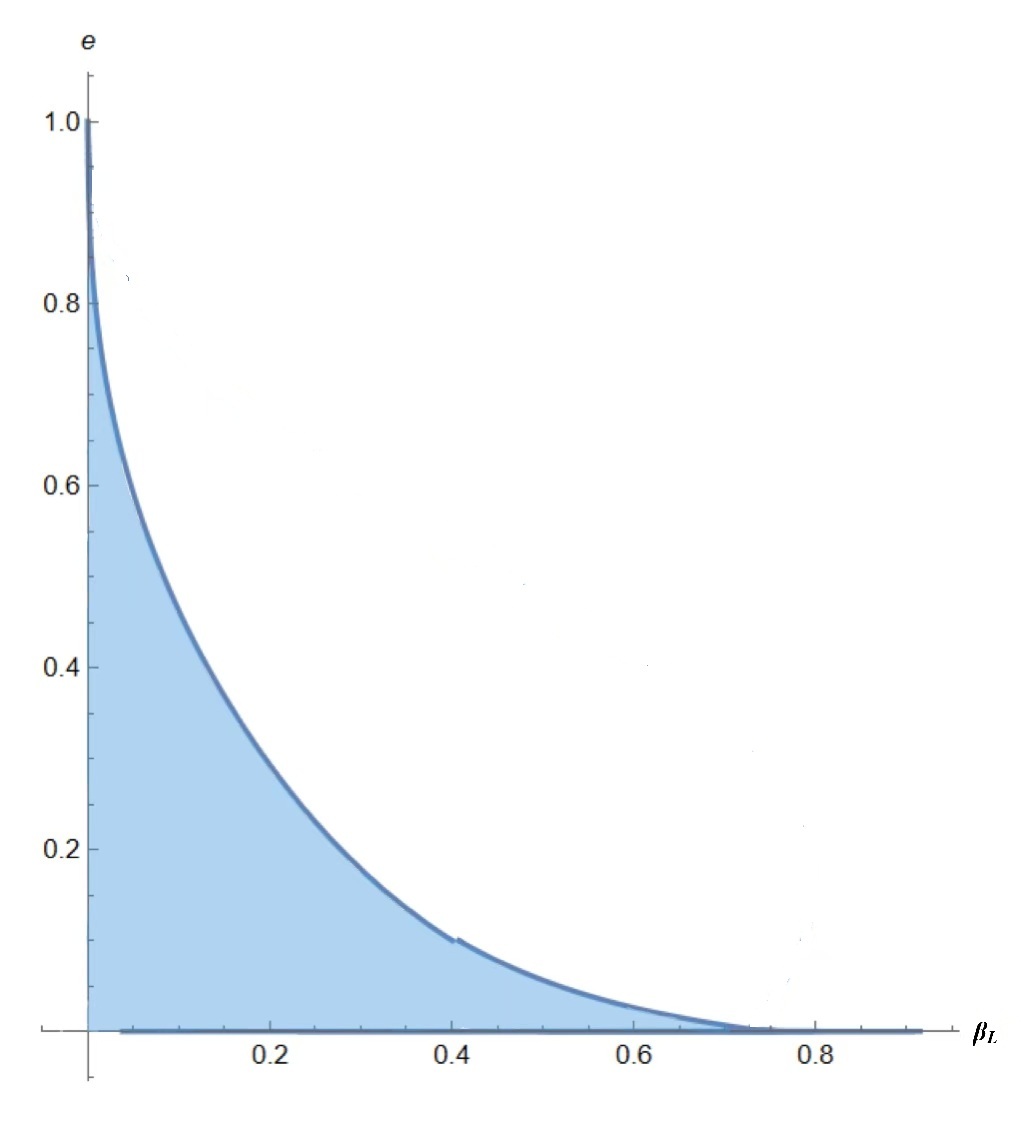}
        \caption{Estimation of the linear stability region for Lagrange solution (blue area).}
        \label{fig:sub1}
    \end{subfigure}
    \hspace{0.02\textwidth}
    \begin{subfigure}[b]{0.42\textwidth}
        \includegraphics[width=\textwidth]{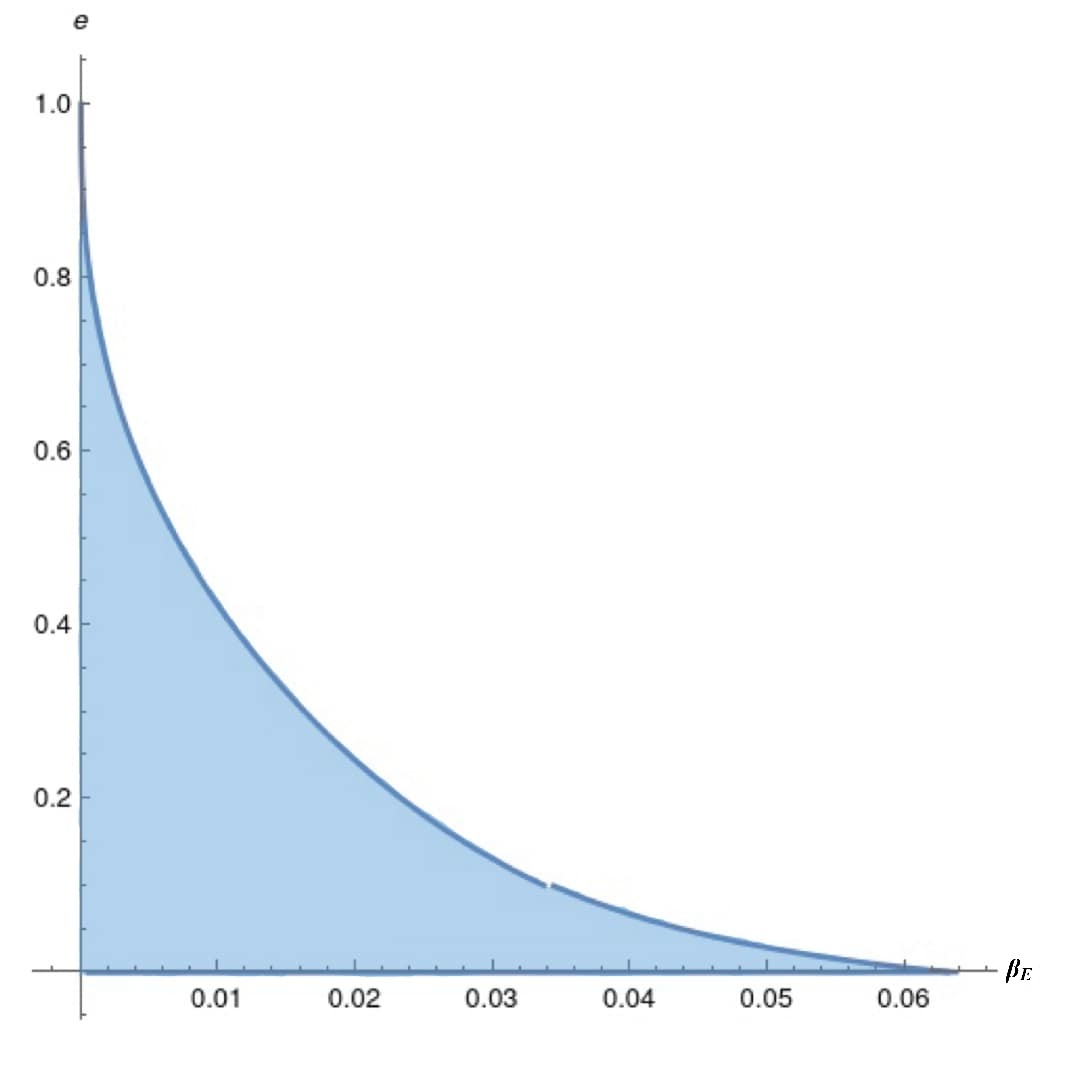}
        \caption{Estimation of the elliptic-hyperbolic region for Euler solution (blue area).}
        \label{fig:sub2}
    \end{subfigure}
    \label{fig:total}
\end{figure}
\end{thm}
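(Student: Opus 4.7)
The plan is to apply Theorem \ref{thm1.1} after expressing both the Lagrange and Euler essential parts as perturbations $B_{Kep}(e,\theta)+\sigma D(\theta)$ of the linearized Kepler system in (\ref{Kep}). The first step is to identify the correct scalar perturbation parameter in each case. As the introduction notes, when $\beta_L=0$ the Lagrange essential part reduces to $B_{Kep}$; diagonalizing the $\beta_L$-dependent block and comparing with $\mathcal{R}_{kep}$ suggests the substitution $\sigma_L:=3-\sqrt{9-\beta_L}$, which is a diffeomorphism from $\beta_L\in[0,9]$ onto $\sigma_L\in[0,3]$ with inverse $\beta_L=6\sigma_L-\sigma_L^2$. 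A direct rearrangement then converts the bound $|\sigma_L|<1/\sqrt{f_L(e)}$ of Theorem \ref{thm1.1} into the stated inequality $\beta_L<9-(3-1/\sqrt{f_L(e)})^2$. For Euler a parallel computation yields the linear relation $\sigma_E=2\beta_E$, so the same bound becomes $\beta_E<1/(2\sqrt{f_L(e)})$.

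The second step is to verify the hypothesis of Theorem \ref{thm1.1}, namely that $D(A|_{E_2(-I_2)}-B_{Kep})^{-1}$ has only real eigenvalues on $E_2(-I_2)$. In both the Lagrange and Euler cases the relevant $D$ is a constant symmetric matrix of low rank, so the reality of the spectrum reduces to a scalar condition on the resolvent kernel of $A|_{E_2(-I_2)}-B_{Kep}$; I expect this to follow from reversibility of the Kepler flow under $\theta\mapsto-\theta$ combined with self-adjointness of $A|_{E_2(-I_2)}-B_{Kep}$. Once verified, Theorem \ref{thm1.1} immediately delivers $\mathcal{I}(A-B_{Kep},A-B_{Kep}-\sigma D)=0$ and $i_{-1}(\gamma_{\sigma,e})=2$ throughout the parameter regions in question.

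The third step converts these index outputs into the two stability conclusions. For Lagrange, Theorem \ref{thm1.1} gives linear stability once one further checks $i_1(\gamma_{\sigma_L,e})=0$. I would establish this by continuation in $(\beta_L,e)$ from the circular case $e=0$, where the computation of $i_1$ is classical (see \cite{HS10,HLS14}); the relative Morse index estimate from the previous step prevents any spectral crossing through the eigenvalue $1$ while $(\beta_L,e)$ varies inside the described region, so $i_1$ stays zero and linear stability follows with both rotation angles in $(\pi,2\pi)$. For Euler the desired classification is EH. Here the Maslov-type indices $i_1$ and $i_{-1}$ together pin down the eigenvalue configuration: combining the output of Theorem \ref{thm1.1} with an $i_1$ computation that, in contrast to the Lagrange case, is nonzero forces one pair of eigenvalues of $\gamma_{\sigma_E,e}(2\pi)$ into $\mathbb{U}\setminus\mathbb{R}$ and the other pair into $\mathbb{R}\setminus\{0,\pm1\}$.

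The main obstacle I anticipate is the quantitative construction of the explicit function $f_L(e)$ of (\ref{eqflag}) uniformly in $e\in[0,1)$. Since $B_{Kep}$ contains the factor $1/(1+e\cos\theta)$, which becomes nearly singular as $e\to 1$, producing a Hilbert-Schmidt-type bound on the resolvent of $A|_{E_2(-I_2)}-B_{Kep}$ that remains finite up to $e=1$ requires very careful use of the explicit resolvent of the Kepler operator together with the perturbation structure of $D$. This is precisely where the trace-formula machinery of \cite{HOW15,HOW19} together with the non-degeneracy bound in Theorem \ref{thm1.1} does the heavy lifting, and where the explicit shape of $f_L(e)$ emerges.
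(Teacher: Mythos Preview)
Your overall architecture for the Lagrange part---write $B_L=B_{Kep}+\sigma_L D_L$ with $\sigma_L=3-\sqrt{9-\beta_L}$, apply Theorem~\ref{thm1.1}, and combine $i_{-1}=2$ with $i_1=0$---matches the paper. But two of your steps have real gaps.

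First, your verification that $D(A|_{E_2(-I)}-B_{Kep})^{-1}$ has real spectrum is too loose. The perturbation $D_L(\theta)=\mathrm{diag}\{O_2,\tilde N/(2(1+e\cos\theta))\}$ is \emph{indefinite} (eigenvalues of $\tilde N$ are $\pm1$), so neither self-adjointness of $A-B_{Kep}$ nor time-reversibility by itself forces $D_L(A-B_{Kep})^{-1}$ to have real eigenvalues. The paper's Lemma~\ref{lem5.1} uses a specific trick: the Sturm--Liouville operator $\mathcal A_L(e,9,\theta)=\mathcal A_{Kep}-3\mathcal R_1$ is known from \cite{HLS14} to be \emph{positive} for all $e$, so by Proposition~\ref{prop2.1} the operator $\mathcal R_1\mathcal A_L(e,9,\theta)^{-1}$ has real spectrum $\{\lambda_j\}$; a shift then gives that $\mathcal R_1\mathcal A_{Kep}^{-1}$ has spectrum $\{\lambda_j/(1+3\lambda_j)\}\subset\mathbb R$, and Proposition~\ref{propltoh} transfers this to $\mathcal F(B_{Kep},D_L;E_2(-I))$. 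Your sketch does not supply this positive-operator-plus-shift argument. Also, $i_1(\gamma_{\beta,e})=0$ is simply quoted from \cite{HLS14} for all $(\beta,e)\in[0,9)\times[0,1)$; you do not need (and do not get) $i_1$ information from Theorem~\ref{thm1.1}.

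Second, your Euler argument is incorrect as stated. The Euler perturbation $Q_E(\theta)=\mathrm{diag}\{O_2,\frac{1}{1+e\cos\theta}\mathrm{diag}(-2\beta_E,\beta_E)\}$ is \emph{not} a scalar multiple of $D_L$ (the diagonal ratio is $-2{:}1$, not $1{:}{-1}$), so there is no parallel substitution $\sigma_E=2\beta_E$ that lands you on the same $f_L(e)$. The paper instead uses a comparison: with $D_E=-D_L$ one has $Q_E\le 2\beta_E D_E$, and monotonicity of the relative Morse index (from \cite{ZL17}) reduces non-degeneracy of $A-B_E$ on $E_2(-I)$ to that of $A-B_{Kep}-2\beta_E D_E$. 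Since $Tr(\mathcal F^2)$ is even in $D$, $f_{E,\pm}=f_{L,\pm}$, yielding the bound $\beta_E<1/(2\sqrt{f_L(e)})$. Finally, the EH conclusion is not obtained from an $i_1$ computation as you suggest; the paper invokes the ordering of the $\pm1$-degenerate curves $0,\Theta_1^-,\Theta_1^+,\Gamma_1,\ldots$ from \cite{ZL17}, and the trace bound places the first $-1$-degenerate curve $\Theta_1^-$ to the right of $\{(1/(2\sqrt{f_L(e)}),e)\}$, which is exactly the region where \cite{ZL17} shows the Euler solution is elliptic-hyperbolic.
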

For the $N$-body ERE with $N>3$, the dimension of the system is higher. In many cases, this system can be controlled by a series of simplified systems, which we refer to as $(\alpha,\eta)$-type system, as follows,
$$
\dot{\gamma}_{\alpha,\eta,e}(\theta)=JB_{\alpha,\eta}\gamma_{\alpha,\eta,e}(\theta),\quad \gamma_{\alpha,\eta,e}(0)=I_4,
$$
where
$$
B_{\alpha,\eta}(\theta)=\left(\begin{array}{cc}
    I_2 & -J_2 \\
    J_2 & I_2-\frac{R_{\alpha,\eta}}{1+e\cos\theta}
\end{array}\right),\ \ R_{\alpha,\eta}=\alpha I_2+\eta \tilde{N},
$$
with $\alpha\geq 1$, $\eta\geq 0$ and $\tilde{N}=\left(\begin{array}{cc}
    1 & 0 \\
    0 & -1
\end{array}\right)$.
For this system, we obtain following theorem.
\begin{thm}\label{thm1.5}
    For the $(\alpha,\eta)$-type system (\ref{eq5.25}) with $\alpha\geq1,\eta\geq0$, let $\gamma_{\alpha,\eta,e}(\theta)$ be the fundamental solution, then for $(\alpha,\eta,e)$ satisfies
    \bea
|\zeta(\alpha,\eta)|<\frac{1}{\sqrt{\tilde{f}(e)}},\nonumber
    \eea
we have $i_{-1}(\gamma_{\alpha,\eta,e})=2$, where $\zeta(\alpha,\eta)=\max\{|3-(\alpha+\eta)|,\ |\eta-\alpha|\}$ and $\tilde{f}(e)$ is defined in (\ref{f-}) is a specific smooth function with respect to $e\in [0,1)$. Moreover, if for such $(\alpha,\eta, e)$, $i_1(\gamma_{\alpha,\eta,e})=0$ holds, then $\gamma_{\alpha,\eta,e}(T)$ is linearly stable.
\end{thm}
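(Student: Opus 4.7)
The plan is to reduce Theorem \ref{thm1.5} to a (variant of) Theorem \ref{thm1.1} applied to the $(\alpha,\eta)$-family, after identifying the correct ``perturbation from Kepler'' direction. The key algebraic fact is that $R_{\alpha,\eta} - \mathcal{R}_{kep} = \mathrm{diag}(\alpha+\eta-3,\ \alpha-\eta)$, so one can write $B_{\alpha,\eta}(\theta) = B_{Kep}(e,\theta) + P_{\alpha,\eta}(\theta)$ with
\[
P_{\alpha,\eta}(\theta) \;=\; -\frac{1}{1+e\cos\theta}\,
\begin{pmatrix} 0_{2} & 0_{2} \\ 0_{2} & \mathrm{diag}(\alpha+\eta-3,\ \alpha-\eta)\end{pmatrix};
\]
the two natural scalar parameters $\xi_{1}=\alpha+\eta-3$ and $\xi_{2}=\alpha-\eta$ are both bounded in modulus by $\zeta(\alpha,\eta)$, so $P_{\alpha,\eta}$ is a perturbation of size $\zeta(\alpha,\eta)$ from the Kepler reference $\alpha=\eta=3/2$ (which indeed lies in the admissible region $\alpha\ge 1$, $\eta\ge 0$).

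First I would verify the hypothesis of Theorem \ref{thm1.1}, namely that the auxiliary product $P_{\alpha,\eta}\,(A|_{E_{2}(-I_{2})}-B_{Kep})^{-1}$ has only real eigenvalues. Since $A|_{E_{2}(-I_{2})}-B_{Kep}$ is self-adjoint and invertible and $P_{\alpha,\eta}$ is a symmetric, position-block multiplication operator, I conjugate by $|A|_{E_{2}(-I_{2})}-B_{Kep}|^{-1/2}$ to produce a self-adjoint operator with the same spectrum, yielding reality. Handling the two independent directions $D_{1}=-\mathrm{diag}(0,0,1,0)/(1+e\cos\theta)$ and $D_{2}=-\mathrm{diag}(0,0,0,1)/(1+e\cos\theta)$ jointly, the spectral radius of $P_{\alpha,\eta}\,(A-B_{Kep})^{-1}$ is bounded above by $\zeta(\alpha,\eta)\cdot\sqrt{\tilde f(e)}$, where $\tilde f(e)$ is the squared Hilbert--Schmidt norm of the Kepler resolvent kernel extracted from the trace formula, i.e.\ precisely the function written out in (\ref{f-}).

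Second, under the hypothesis $\zeta(\alpha,\eta)<1/\sqrt{\tilde f(e)}$ this spectral radius is strictly less than $1$, so $A|_{E_{2}(-I_{2})}-B_{\alpha,\eta}$ is invertible; the same estimate applied to the straight-line homotopy $s\mapsto B_{Kep} + s\,P_{\alpha,\eta}$, $s\in[0,1]$, yields non-degeneracy along the entire path. Homotopy invariance of the Maslov-type index under non-degenerate deformations then gives
\[
i_{-1}(\gamma_{\alpha,\eta,e}) \;=\; i_{-1}(\gamma_{Kep,e}) \;=\; 2,
\]
where the last equality is the Kepler case of Theorem \ref{thm1.1} (the specialization $\sigma=0$).

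Finally, for the linear stability assertion, the additional hypothesis $i_{1}(\gamma_{\alpha,\eta,e})=0$, together with the non-degeneracies at $\omega=\pm 1$ already established, rules out both $+1$ and $-1$ as eigenvalues of $\gamma_{\alpha,\eta,e}(T)$. Combined with $i_{-1}=2$ in dimension $4$ and Long's $\Sp(4)$ classification of symplectic paths, this forces the elliptic-elliptic normal form $R(\vartheta_{1})\diamond R(\vartheta_{2})$, which is semi-simple with all eigenvalues on $\dbU$. The main obstacle is producing an explicit $\tilde f(e)$ in (\ref{f-}) that is uniformly finite on $[0,1)$: the weight $1/(1+e\cos\theta)$ becomes singular at $\theta=\pi$ as $e\to 1$, so the trace-formula computation must be done carefully to keep the Hilbert--Schmidt norm of the Kepler resolvent integrable up to the boundary of the eccentricity range, and one must check that the two-parameter nature of $(\alpha,\eta)$ does not degrade the one-parameter bound of Theorem \ref{thm1.1} by more than the scalar factor $\zeta(\alpha,\eta)$.
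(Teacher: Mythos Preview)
Your decomposition $B_{\alpha,\eta}=B_{Kep}+P_{\alpha,\eta}$ and the homotopy strategy are correct, but the step verifying the hypothesis of Theorem~\ref{thm1.1} has a genuine gap. Conjugation by $|A-B_{Kep}|^{-1/2}$ does not produce a self-adjoint operator: since $i_{-1}(\gamma_{Kep})=2$ the operator $A|_{E_2(-I)}-B_{Kep}$ is \emph{indefinite}, so its polar sign factor is non-trivial and need not commute with $P_{\alpha,\eta}$. Nor can one fall back on Corollary~\ref{thm3.4}, because $P_{\alpha,\eta}$ is itself indefinite whenever $3-(\alpha+\eta)$ and $\eta-\alpha$ have opposite signs (e.g.\ $\alpha=1,\eta=0$ gives the block $\mathrm{diag}(2,-1)/(1+e\cos\theta)$). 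Without reality of the eigenvalues, the inequality $Tr(\mathcal F^2)<1$ no longer controls the spectral radius, so Theorem~\ref{thm1.1} is not available for $P_{\alpha,\eta}$ as written.

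The paper sidesteps this by never applying Theorem~\ref{thm1.1} to the indefinite $P_{\alpha,\eta}$. It first treats the \emph{semi-definite} direction $\tilde D=\mathrm{diag}(0_2,I_2)/(1+e\cos\theta)$, where Corollary~\ref{cor4.4} applies directly and yields $\mathcal I(A-B_{Kep},\,A-B_{Kep}\mp\zeta\tilde D)=0$ for all $|\zeta|<1/\sqrt{\tilde f(e)}$ (Theorem~\ref{thm5.6}). Then the pointwise sandwich $-\zeta(\alpha,\eta)\tilde D\le Q_{\alpha,\eta}\le\zeta(\alpha,\eta)\tilde D$ together with the monotonicity Proposition~\ref{prop3.2}(3) squeezes $\mathcal I(A-B_{Kep},\,A-B_{\alpha,\eta})=0$, hence $i_{-1}(\gamma_{\alpha,\eta,e})=i_{-1}(\gamma_{Kep})=2$. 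Your spectral-radius route can in fact be salvaged---write $P_{\alpha,\eta}=\Lambda\tilde D$ with a constant diagonal $\Lambda$ of norm $\zeta$, note that $\tilde f(e)=\|\tilde D^{1/2}(A-B_{Kep})^{-1}\tilde D^{1/2}\|_{HS}^2$ on each $\tilde E^\pm$, and bound the spectral radius of $\Lambda K$ by $\zeta\sqrt{\tilde f(e)}$---but the paper's monotonicity argument is shorter and never needs reality for the indefinite perturbation at all.
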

As an application of above theorem, we give a stability result for the regular $(1+n)$-gon ERE.
\begin{thm}\label{thm1.4}
   For $n\geq 9$, the regular $(1+n)$-gon central configuration with center mass $m$ and eccentricity $e$ is linearly stable if
   \bea
(m,e)\in \mathcal{EE}_{(1+n)}, \nonumber
   \eea
   where $\mathcal{EE}_{(1+n)}$ is an explicit region in plane $(m,e)$ determined by $n$, defined by (\ref{EE1+n}).
\end{thm}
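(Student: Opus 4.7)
The strategy is to reduce Theorem~\ref{thm1.4} to a block-by-block application of Theorem~\ref{thm1.5}, exploiting the symplectic decomposition
$$ B(\theta) = \hat{B}_1(\theta) \diamond \cdots \diamond \hat{B}_{[n/2]}(\theta) $$
of the essential part of the regular $(1+n)$-gon monodromy. In the limit $m\to\infty$ (equivalently $\beta_M=0$) each block $\hat{B}_i$ collapses to the Kepler block $B_{Kep}$, which in the notation of Theorem~\ref{thm1.5} corresponds to the pair $(\alpha,\eta)=(3/2,3/2)$ and hence to $\zeta=0$. For finite $m$ each block continues to have the $(\alpha,\eta)$-type form of Theorem~\ref{thm1.5} with parameters $(\alpha_i(m),\eta_i(m))$ that depart from $(3/2,3/2)$ by an amount controlled by $1/m$ and by the $i$-th eigenvalue contribution coming from the trigonometric coefficients of the regular $n$-gon.

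First I would read off these $(\alpha_i(m),\eta_i(m))$ explicitly from the formula for $\hat{B}_i(\theta)$ and check that $\alpha_i(m)\geq 1$ and $\eta_i(m)\geq 0$ hold simultaneously for every $1\leq i\leq [n/2]$ on the admissible range of $m$. This is where the hypothesis $n\geq 9$ is expected to appear: for smaller $n$ one of the blocks violates $\alpha_i\geq 1$ and thereby falls outside the scope of Theorem~\ref{thm1.5}. Setting $\zeta_i(m):=\zeta(\alpha_i(m),\eta_i(m))=\max\{|3-(\alpha_i+\eta_i)|,\,|\eta_i-\alpha_i|\}$, the region $\mathcal{EE}_{(1+n)}$ is then the set of $(m,e)$ on which
$$ \zeta_i(m)\ <\ \frac{1}{\sqrt{\tilde{f}(e)}}\qquad\text{for every }1\leq i\leq [n/2]. $$
Applying Theorem~\ref{thm1.5} block by block yields $i_{-1}(\gamma_{\alpha_i,\eta_i,e})=2$ and non-degeneracy at $\omega=-1$ for every $i$ throughout $\mathcal{EE}_{(1+n)}$.

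To upgrade this non-degeneracy to linear stability I still need to verify the remaining hypothesis $i_1(\gamma_{\alpha_i,\eta_i,e})=0$ of Theorem~\ref{thm1.5} for each block. I would argue by continuation in $e$: the Moeckel--Roberts criterion recalled in the introduction asserts that for $n\geq 7$ and sufficiently large $m$ the relative equilibrium $e=0$ is linearly stable, which forces $i_1=0$ on the slice $e=0$. Since an index change for $i_1$ requires the monodromy to become degenerate at $\omega=1$, and the $\zeta_i$-bound defining $\mathcal{EE}_{(1+n)}$ rules out such degeneracies (the $\omega=1$ analogue of the $\omega=-1$ estimate in Theorem~\ref{thm1.5} follows from the same trace-formula mechanism underlying Theorem~\ref{thm1.1}), the equality $i_1=0$ persists on every connected component of $\mathcal{EE}_{(1+n)}$ that meets the slice $e=0$. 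Combining $i_{-1}=2$ and $i_1=0$ block by block and taking the symplectic sum of the monodromies then delivers the linear stability of the full $(1+n)$-gon ERE.

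The principal obstacle is the propagation of $i_1=0$ from $e=0$ to $e$ close to $1$: the singular behaviour of the variational system at $e=1$ is precisely the difficulty flagged in the introduction, and it is the feature that Theorem~\ref{thm1.5} is designed to handle through the bound $1/\sqrt{\tilde{f}(e)}$, which stays finite on the whole interval $[0,1)$. A secondary bookkeeping difficulty is checking that $\mathcal{EE}_{(1+n)}$ is nonempty and coincides with the explicit description used in the statement; this amounts to uniformly controlling the largest $\zeta_i(m)$ over $1\leq i\leq[n/2]$, and direct estimation of the trigonometric coefficients of the regular $n$-gon is what eventually delivers the stated threshold $n\geq 9$.
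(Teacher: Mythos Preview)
Your overall architecture is right --- decompose into blocks, get $i_{-1}=2$ per block via Theorem~\ref{thm1.5}, combine with $i_1=0$, and sum --- but there are two genuine gaps.

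First, the blocks $\hat{B}_l(\theta)$ are \emph{not} of $(\alpha,\eta)$-type for finite $m$. For $1\leq l\leq[\tfrac{n-1}{2}]$ the matrix $\mathcal{U}(l)$ in (\ref{2.17})--(\ref{gon-1}) is $4\times 4$, so $R_l$ is $4\times 4$ and $\hat{B}_l$ is an $8$-dimensional Hamiltonian block, not a $4$-dimensional one. The paper does not diagonalize these blocks; instead it sandwiches each $R_l$ between two direct sums of $(\alpha,\eta)$-type matrices,
\[
R_{\check{\alpha}_+,\check{\eta}_+}\oplus R_{\check{\alpha}_-,\check{\eta}_-}\ \leq\ R_l\ \leq\ R_{\hat{\alpha}_+,\hat{\eta}_+}\oplus R_{\hat{\alpha}_-,\hat{\eta}_-},
\]
and then uses the monotonicity of the relative Morse index (Proposition~\ref{prop3.2}(3)) to control $i_{-1}$ for the genuine block by the $(\alpha,\eta)$-type bounds. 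This is why the definition of $\mathcal{ST}_n$ involves four regions $\check{\mathcal{ST}}^\pm_{n,l},\hat{\mathcal{ST}}^\pm_{n,l}$ per $l$, not just one.

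Second, your mechanism for $i_1=0$ cannot work as stated. The trace formula requires $A-B_{Kep}$ to be invertible on the relevant domain, but by Lemma~\ref{lem4.2.1} the Kepler system has $\nu_1(\gamma_{0,e})=3$, so $A|_{E_2(I)}-B_{Kep}$ is degenerate and there is no $\omega=1$ analogue of Theorem~\ref{thm1.1}. The paper obtains $i_1=0$ by a completely different route: Lemma~\ref{lem5.10}, taken from \cite{HLO20}, proves directly that the Sturm--Liouville operator $\mathcal{A}(\mathcal{R},e)$ is positive on $\mathbb{D}_{n-1}(1)$ whenever $n\geq 9$ and $m>2Q_{\max}(n)$, which forces $i_1=0$ via Lemma~\ref{4.1}. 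This is where the restriction $n\geq 9$ actually enters (not through $\alpha_i\geq 1$), and it is why $\mathcal{EE}_{(1+n)}$ is defined as the intersection $\mathcal{ST}_n\cap\big((2Q_{\max}(n),\infty)\times[0,1)\big)$ rather than by the $\zeta$-inequalities alone.
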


This paper is organized as following: In Section $2$, we introduce the $\mathbb{Z}_2$ decomposition of the $\mathcal{N}$-reversible Hamiltonian system, the trace formula with Lagrangian boundary condition and the index theory. Based on the $\mathbb{Z}_2$ decomposition and trace formula, we obtain a useful stability criteria in Theorem \ref{thm3.3}. In Section $3$, we first state the Meyer-Schmidt reduction for ERE which is useful in our study of the linear stability. Further investigation reveals that the linearized systems of many important EREs can be regarded as perturbed systems derived from the linearized Kepler problem, this motivates us to calculate the fundamental solution of linear system at Keplerian orbit. At the end of this section, we present the proof of  our main Theorem \ref{thm1.1} which give a quantitative analysis to the stable region of ERE via the stability criteria in Section $2$. In Section $4$, we apply the results in Section $3$ to Lagrange solution and Euler solution to estimate their stable region or elliptic-hyperbolic region. Moreover, for the $N$-body ERE with $N>3$, the system has high dimension, we introduce the $(\alpha,\eta)$-type system which is useful in the stability analysis for high dimension. In particular we give the stable region for the regular $(1+n)$-gon solution. In the last Section $5$, we give the details of some complex calculations in Section $4$.

\section{Trace formula, index theory and stability criteria}

In this section, we will first introduce the $\mathbb{Z}_2$ decomposition of the $\mathcal{N}$-reversible Hamiltonian system. We will see that, for the $\mathcal{N}$-reversible system, it is natural to obtain the Lagrangian boundary conditions. Secondly, we will briefly introduce the trace formula with Lagrangian boundary condition, the relative Morse index and
Maslov-type index theory. By combining $\mathbb{Z}_2$ decomposition and the trace formula, a useful stability criterion can be derived, as stated in Theorem \ref{thm3.3}.

\subsection{$\mathbb{Z}_2$ decomposition of the $\mathcal{N}$-reversible Hamiltonian system}\label{sec2}

The $\mathcal{N}$-reversible symmetry comes from the reversible system, more details can
be found in \cite{D77}.
In the study of the periodic solutions in the $N$-body problem, in many cases, the $\mathcal{N}$-reversible symmetry naturally appears. More precisely,
its Hamiltonian function $H(x)$ has the symmetry $H(\mathcal{N}x)=H(x)$ for some anti-symplectic orthogonal matrix $\mathcal{N}$ (\,i.e.,
$\mathcal{N}J=-J\mathcal{N},\mathcal{N} \in O(2n)$\,) with $\mathcal{N}^2=I$ , and we are interested in studying the periodic solution $x(t)$ satisfies
\bea
x'(t)=J\nabla H(x), x(t)=\mathcal{N}x(t), x(T+t)=x(t).\label{N-Hilm}
\eea
Because of the symmetry, the
linearized Hamiltonian equation of (\ref{N-Hilm}) at periodic solution $x(t)$ also has reversible symmetry
with respect to $\mathcal{N}$,
\bea
z'(t)=JB(t)z(t),\ \ B(t)=D^{2}H(x(t)),\ \ \text{with}\ \ \mathcal{N}B(t)=B(T-t)\mathcal{N}.\label{N-Hilm2}
\eea
In particular, in our study of the linear stability of the ERE, it will be shown in Section \ref{Sec2} that
\bea
B(\theta)=\left( \begin{array}{cccc} I_{k} & -\mathbb{J}_{k/2} \\
\mathbb{J}_{k/2} & I_{k}-\frac{\mathcal{R}}{1+e\cos\theta}
\end{array}\right),\quad \theta\in[0,2\pi], \nonumber \eea
where $k=2N-4$ and $e$ is the eccentricity, $\mathcal{R}=I_{k}+\mathcal{D}$ with
$\mathcal{D}=\frac{1}{\lambda}\mathbb{A}^\mathcal{T}D^2U(a)\mathbb{A}\big|_{w\in\mathbb{R}^{k}}$,
      and $\lambda=\frac{U(a)}{I(a)}$ which depends on the central configuration $a$.
For ERE, the useful case is
\bea
\mathcal{N}=diag\{\tilde{N},-\tilde{N}\}, \tilde{N}=\mathrm{diag}\{I,-I\},\label{Sp-symm}
\eea
Then, the $\mathcal{N}$-reversible symmetry (\ref{N-Hilm2}) with (\ref{Sp-symm}) for ERE is equivalent to
\bea
\tilde{N}\mathcal{D}\tilde{N}=\mathcal{D}.\label{symm2}
\eea
\begin{rem}
One can check that the Lagrange solution and Euler solution always satisfy the symmetry (\ref{symm2}), see (\ref{eq5.2}) in Section $4.1$ and (\ref{BE}) in
Section $4.2$. Hence, we can use the $\mathbb{Z}_{2}$ decomposition to these planar $3$-body EREs directly. However,  not all the EREs satisfy the symmetry (\ref{symm2}),
for example, the regular $(1+n)$-gon solution, see (\ref{2.17}) in Section $4.4$, so we can not use the $\mathbb{Z}_{2}$ decomposition directly. Instead, we introduce
a simple $(\alpha,\eta)$-type system (\ref{apl-eta}) which has $\mathbb{Z}_{2}$ decomposition, and use it to control the regular $(1+n)$-gon system and estimate the stable region.
More details can be found in Section $4.3$.
\end{rem}
In general, for the $2\mathfrak{n}$-dimensional linear Hamiltonian system with the symmetry (\ref{N-Hilm2}), we can do the $\mathbb{Z}_2$ decomposition  in the following.
Recall that the operator
$A|_{E_{\mathfrak{n}}(S)}=-J\frac{d}{dt}$ with domain
$$
E_{\mathfrak{n}}(S)=\left\{z\in W^{1,2}([0,T];\mathbb{C}^{2\mathfrak{n}})\ |\ z(0)=Sz(T) \right\}.
$$
Let $S$ satisfy
$
\mathcal{N}S^\mathcal{T}=S\mathcal{N}.
$
Define
$$
g: E_{\mathfrak{n}}(S) \rightarrow E_{\mathfrak{n}}(S)
\ \
\text{by}\ \
z(t) \mapsto \mathcal{N}z(T-t). \nonumber
$$
Obviously we have $(g^2z)(t)=z(t)$, and $g$ generates a $\mathbb{Z}_2$ group action on $E_{\mathfrak{n}}(S)$. Then we have
$$E_{\mathfrak{n}}(S)=\ker(g-id)\oplus \ker(g+id).$$
Denote by $E_{\mathfrak{n}}^\pm(S)=\ker(g\pm id)$, where $id$ is the identity mapping. For $\forall z\in E_{\mathfrak{n}}^+(S)$, we have $gz=z$, i.e.\ $\mathcal{N}z(T-t)=z(t)$, which implies that
\be
\mathcal{N}z(\frac{T}{2})=z(\frac{T}{2}),\quad S\mathcal{N}z(0)=z(0).\nonumber
\ee
Similar, for $\forall z\in E_{\mathfrak{n}}^-(S)$, we have
\be
\mathcal{N}z(\frac{T}{2})=-z(\frac{T}{2}),\quad S\mathcal{N}z(0)=-z(0).\nonumber
\ee
Denoted by $V^+(S\mathcal{N})$ and $V^-(S\mathcal{N})$ the positive and negative definite subspaces of $S\mathcal{N}$ respectively, obviously they are Lagrangian subspaces of $\mathbb{R}^{2\mathfrak{n}}$. Let
\bea
 \tilde{E}_{\mathfrak{n}}^{\pm}(S)=\{z\in W^{1,2}([0,\frac{T}{2}];\mathbb{C}^{2\mathfrak{n}})\, |\, z(0)\in V^{\pm}(S\mathcal{N}), z(\frac{T}{2})\in V^{\pm}(\mathcal{N})\}, \nonumber
\eea
then $E_{\mathfrak{n}}^\pm(S)$ are isomorphic to $\tilde{E}_{\mathfrak{n}}^\pm(S)$. Especially, when $S=-I_{\mathfrak{n}}$ and $\mathcal{N}=diag\{\tilde{N},-\tilde{N}\}$
with $\tilde{N}=\mathrm{diag}\{I_{\mathfrak{n}/2},-I_{\mathfrak{n}/2}\}$, we have
\be
\begin{aligned}
\tilde{E}_{\mathfrak{n}}^+(-I)=\{z\in W^{1,2}([0,\frac{T}{2}];\ \mathbb{C}^{2\mathfrak{n}})\  |\  z(0)\in V^+(-\mathcal{N}),\ z(\frac{T}{2})\in V^+(\mathcal{N})\}, \\
\tilde{E}_{\mathfrak{n}}^-(-I)=\{z\in W^{1,2}([0,\frac{T}{2}];\ \mathbb{C}^{2\mathfrak{n}})\  |\  z(0)\in V^-(-\mathcal{N}),\ z(\frac{T}{2})\in V^-(\mathcal{N})\}. \nonumber
\end{aligned}
\ee
If $B(t)$  satisfies (\ref{N-Hilm2}), that is $\mathcal{N}B(t)\mathcal{N}=B(T-t)$ holds, then $(A-B)(gz)=g((A-B)z)$ and we have
\be
\begin{aligned}
g\left(\left(A-B\right)\left(gz\right)\right)=\left(A-B\right)\left(gz\right),\  \forall z\in E_{\mathfrak{n}}^+(S), \ \
g\left(\left(A-B\right)\left(gz\right)\right)=-\left(A-B\right)\left(gz\right),\  \forall z\in E_{\mathfrak{n}}^-(S), \nonumber
\end{aligned}
\ee
which means that $A-B$ is invariant on $E_{\mathfrak{n}}^{\pm}(S)$, then we have
\be
(A-B)|_{E_{\mathfrak{n}}(S)}=(A-B)|_{E_{\mathfrak{n}}^+(S)}\oplus (A-B)|_{E_{\mathfrak{n}}^-(S)} \nonumber
\ee
and hence
$$
\ker\left(\left(A-B\right)|_{E_{\mathfrak{n}}(S)}\right)=\ker\left(\left(A-B\right)|_{E_{\mathfrak{n}}^+(S)}\right)\oplus \ker\left(\left(A-B\right)|_{E_{\mathfrak{n}}^-(S)}\right).
$$
Since $E_{\mathfrak{n}}^\pm(S)$ are isomorphic to $\tilde{E}_{\mathfrak{n}}^\pm(S)$, we have
$$
\dim\ker\left(\left(A-B\right)|_{E_{\mathfrak{n}}^\pm(S)}\right)=\dim\ker\left(\left(A-B\right)|_{\tilde{E}_{\mathfrak{n}}^\pm(S)}\right).
$$
From the above analysis, the linear Hamiltonian system with $S$-periodic boundary conditions are closely related to the Lagrangian boundary conditions.
When it has the $\mathcal{N}$-reversible symmetry, the space $E_{\mathfrak{n}}(S)$ can be decomposed into subspaces $\tilde{E}_{\mathfrak{n}}^{\pm}(S)$ which have the
Lagrangian boundary condition. In the next sections, we will study the Hamiltonian system with Lagrangian boundary condition and it's trace formula and index theory,
which is the main tool in our study of the stability problem in ERE.

\subsection{Trace formula with Lagrangian boundary condition}


Consider the standard symplectic space $(\mathbb{C}^{2\mathfrak{n}}, \omega)$, where the standard symplectic structure $\omega(x,y)$ in $\mathbb{C}^{2\mathfrak{n}}$ is given by
$$
\omega(x,y)=<J_{2\mathfrak{n}}x,y>,
$$
where $<\cdot,\cdot>$ is the standard Hermitian inner product. A Lagrangian subspace $V$ of $(\mathbb{C}^{2\mathfrak{n}},\omega)$ is an isotropic subspace of dimension $\mathfrak{n}$, that is, for
any $x, y\in V$, $\omega(x, y)=0$. Denote by $Lag(\mathbb{C}^{2\mathfrak{n}},\omega_{n})$ the set of Lagrangian subspaces of $\mathbb{C}^{2\mathfrak{n}}$.
Suppose $V\in Lag(\mathbb{C}^{2\mathfrak{n}},\omega_{n})$, a Lagrangian frame for $V$ is a linear map $Z: \mathbb{C}^{\mathfrak{n}}\rightarrow \mathbb{C}^{2\mathfrak{n}}$ whose image is $V$.
Let $V_{0},\ V_{1}$ be two Lagrangian subspaces of $\mathbb{C}^{2\mathfrak{n}}$, $B,D\in C([0,T],\mathcal{S}(2\mathfrak{n}))$. The eigenvalue problem of linear Hamiltonian system with Lagrangian boundary condition is to find $\sigma\in \mathbb{C}$ such that
\be{\label{eq3.1}}
z'(t)=J(B(t)+\sigma D(t))z(t),
\ \
z(0)\in V_{0},\ z(T)\in V_{1},
\ee
has non-trivial solutions in domain
\be
E_\mathfrak{n}(V_{0},V_{1})=\{z\in W^{1,2}([0,T];\mathbb{C}^{2\mathfrak{n}})\ \ |\ \ z(0)\in V_{0}, z(T)\in V_{1}\}.\nonumber
\ee
 Let $B,D$ be the operatoers on $E(V_{0},V_{1})$, and $(Bz)(t)=B(t)z(t)$, $(Dz)(t)=D(t)z(t)$. Then (\ref{eq3.1}) can be written as $Az=Bz+\sigma Dz$ on $E_\mathfrak{n}(V_{0},V_{1})$. In the case $A|_{E_\mathfrak{n}(V_{0},V_{1})}-B$ is non-degenerate, it can be written as $D(A|_{E_\mathfrak{n}(V_{0},V_{1})}-B)^{-1}z=\sigma^{-1}z$.

Let
\be
\mathcal{F}(B,D;E_{\mathfrak{n}}(V_{0},V_{1}))=D(A|_{E_{\mathfrak{n}}(V_{0},V_{1})}-B)^{-1}, \nonumber
\ee
for convenience, denote by $\mathcal{F}=\mathcal{F}(B,D)=\mathcal{F}(B,D;E_{\mathfrak{n}}(V_{0},V_{1}))$, if there is no confusion.
Then the eigenvalue problem of (\ref{eq3.1}) can be transformed to the normal eigenvalue problem of operator $\mathcal{F}$ on $E_\mathfrak{n}(V_{0},V_{1})$.

In \cite{HOW19}, they give the trace formula to calculate the trace of $\mathcal{F}^{m}$, following the notations in \cite{HOW19},
let $\gamma_{\sigma}(t)$ be the fundamental solution of (\ref{eq3.1})
and
\bea\label{eq3.3}
P=(Z_0,\gamma_0^{-1}(T)Z_1), \quad Q_d=(Z_0,O_{2\mathfrak{n}\times \mathfrak{n}}),\ \ G_j=P^{-1}M_jQ_d,
\eea
where $Z_0$ and $Z_1$ are frames of $V_0$ and $V_1$, respectively, and
\bea\label{eq3.5}
M_j=\int_{0}^{T}J\hat{D}(t_1)\int_{0}^{t_1}J\hat{D}(t_2)\cdots \int_{0}^{t_j-1}J\hat{D}(t_j) dt_j\cdots dt_2dt_1,\ \ \hat{D}(t)=\gamma_{0}^\mathcal{T}(t)D(t)\gamma_0(t),
\eea
Then we can describe the trace formula in the following.
\begin{thm}\label{th3.1}(See Theorem 3.8 of \cite{HOW19})
    With the above notations, we have that
    \be
   Tr(\mathcal{F}^m)=m \sum_{k=1}^m\frac{(-1)^k}{k}(\sum_{j_1+\cdots +j_k=m}Tr(G_{j_1}\cdots G_{j_k})).\nonumber
   \ee
In particular, for $m=1,2$, we have
\be\label{2orTrace}
Tr(\mathcal{F})=-Tr(G_1),\quad Tr(\mathcal{F}^2)=Tr(G_1^2)-2Tr(G_2).
\ee
\end{thm}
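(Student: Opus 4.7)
The plan is to identify the eigenvalues of $\mathcal{F}$ with the zeros of an explicit characteristic function in $\sigma$, and then extract $\mathrm{Tr}(\mathcal{F}^m)$ from a $\log\det$ expansion. First, any nontrivial $z\in E_{\mathfrak{n}}(V_0,V_1)$ solving $(A-B-\sigma D)z=0$ must have the form $z(t)=\gamma_\sigma(t)Z_0 a$ for some $a\in\mathbb{C}^{\mathfrak{n}}$, and the terminal condition $z(T)\in V_1$ makes the eigenvalue problem equivalent to $\det(Z_0,\gamma_\sigma^{-1}(T)Z_1)=0$. The entire task is thus to expand this determinant as a power series in $\sigma$.

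To get the expansion I gauge-transform $\gamma_\sigma(t)=\gamma_0(t)\Phi(t)$. Direct substitution and the symplectic identity $\gamma_0^{-1}J=J\gamma_0^{\mathcal{T}}$ show that $\Phi$ satisfies $\Phi'=\sigma J\hat{D}\Phi$ with $\hat{D}=\gamma_0^{\mathcal{T}}D\gamma_0$, so Picard iteration gives the Dyson series $\Phi(T)=\sum_{j\geq 0}\sigma^j M_j$ with $M_0=I$ and $M_j$ as in (\ref{eq3.5}). Because $J\hat{D}$ is infinitesimally symplectic, $\Phi(T)\in\mathrm{Sp}(2\mathfrak{n})$ and $\det\Phi(T)=1$; hence setting $W=\gamma_0^{-1}(T)Z_1$,
\begin{equation}
\det\bigl(Z_0,\gamma_\sigma^{-1}(T)Z_1\bigr)=\det\bigl(Z_0,\Phi^{-1}(T)W\bigr)=\det\bigl(\Phi(T)Z_0,W\bigr).\nonumber
\end{equation}
Expanding $\Phi(T)Z_0=Z_0+\sum_{j\geq 1}\sigma^j M_jZ_0$ and using the block identity $(M_jZ_0,0)=M_jQ_d$ together with multilinearity of the determinant,
\begin{equation}
\det\bigl(\Phi(T)Z_0,W\bigr)=\det P\cdot\det\!\Bigl(I+\sum_{j\geq 1}\sigma^j G_j\Bigr)=:\det P\cdot f(\sigma),\nonumber
\end{equation}
so the eigenvalues $\sigma_\ell$ of the boundary-value problem coincide with the zeros of $f$.

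For the trace formula itself I invoke Newton-type identities. On one hand, $f(\sigma)=\prod_\ell(1-\sigma/\sigma_\ell)$ gives $\log f(\sigma)=-\sum_{m\geq 1}\frac{\sigma^m}{m}\mathrm{Tr}(\mathcal{F}^m)$, since the eigenvalues of $\mathcal{F}$ are precisely the $\sigma_\ell^{-1}$. On the other hand, writing $H(\sigma)=\sum_{j\geq 1}\sigma^jG_j$ and using $\log\det(I+H)=\sum_{k\geq 1}\frac{(-1)^{k+1}}{k}\mathrm{Tr}\,H^k$,
\begin{equation}
\log f(\sigma)=\sum_{k\geq 1}\frac{(-1)^{k+1}}{k}\sum_{j_1,\dots,j_k\geq 1}\sigma^{j_1+\cdots+j_k}\mathrm{Tr}(G_{j_1}\cdots G_{j_k}).\nonumber
\end{equation}
Equating coefficients of $\sigma^m$ gives the stated identity. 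The specializations $m=1$ (only $k=1$, $j_1=1$) and $m=2$ (the contributions $k=1,j_1=2$ and $k=2,j_1=j_2=1$) then reproduce (\ref{2orTrace}) by inspection.

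The delicate point to discharge is analytic, not combinatorial: $\mathcal{F}$ is only Hilbert-Schmidt, so the infinite product for $f$ and the identification $\mathrm{Tr}(\mathcal{F}^m)=\sum_\ell\sigma_\ell^{-m}$ must be justified by regularization. The cleanest route is to compare $f$ with the Carleman $2$-regularized determinant $\det{}_2(I-\sigma\mathcal{F})$: the two differ only by a factor $\exp(\sigma\,\mathrm{Tr}\mathcal{F})$, which perturbs only the $\sigma^1$ coefficient, so the expansion above is rigorous for $m\geq 2$, and the $m=1$ case is recovered separately via a direct residue computation using the meromorphic structure of $(A-B-\sigma D)^{-1}$. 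Beyond this regularization step, the argument is purely combinatorial bookkeeping inside the Dyson expansion.
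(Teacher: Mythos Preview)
The paper does not prove this theorem; it simply quotes it as Theorem 3.8 of \cite{HOW19} and uses only the $m=2$ case downstream. So there is no in-paper proof to compare against.

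That said, your argument is correct and is essentially the standard derivation one finds in the Hill-type/trace-formula literature (and in \cite{HOW19} itself): reduce the Lagrangian BVP to the finite-dimensional characteristic equation $\det(Z_0,\gamma_\sigma^{-1}(T)Z_1)=0$, gauge away $\gamma_0$ via $\gamma_\sigma=\gamma_0\Phi$ so that $\Phi'=\sigma J\hat D\,\Phi$ gives the Dyson series $\Phi(T)=\sum_j\sigma^jM_j$, rewrite the characteristic function as $\det P\cdot\det(I+\sum_j\sigma^jG_j)$, and match the $\log\det$ expansion against $-\sum_m\tfrac{\sigma^m}{m}\mathrm{Tr}(\mathcal F^m)$. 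The algebraic steps (in particular $(M_jZ_0,0)=M_jQ_d$ and $\gamma_0^{-1}J=J\gamma_0^{\mathcal T}$) are exactly the ones that make the $G_j$ of (\ref{eq3.3}) appear.

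Your last paragraph correctly flags the only nontrivial analytic point: $\mathcal F$ is Hilbert--Schmidt but not trace class, so the product $\prod_\ell(1-\sigma/\sigma_\ell)$ and the identity $\mathrm{Tr}(\mathcal F^m)=\sum_\ell\sigma_\ell^{-m}$ need the $\det_2$ regularization. This matches the paper's own caveat immediately after the theorem statement, where (\ref{eq3.7}) is asserted only for $m\geq 2$. Your suggested fix via the Carleman determinant is the right one and is precisely how \cite{HOW15,HOW19} handle it.
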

For $m\geq 2$, $\mathcal{F}$ is trace class operator and have
\be\label{eq3.7}
Tr(\mathcal{F}^m)=\sum_j\frac{1}{\lambda_j^m},
\ee
where $\lambda_j$'s are nonzero eigenvalues of the system (\ref{eq3.1}), and each $\lambda_j$ appears as many times as its multiplicity. More details of the trace formula can be found in \cite{HOW19}.

Now we turn to the view of Lagrangian system. Consider the Sturm-Liouville system
\bea\label{eqst3.8}
-(\mathcal{P} y'+\mathcal{Q} y)^{'}+\mathcal{Q}^\mathcal{T} y'+\left(\mathcal{R}_0+\sigma \mathcal{R}_1\right) y=0,
\eea
where $\mathcal{Q}$ is a continuous path of $\mathfrak{n} \times \mathfrak{n}$ real matrices, and $\mathcal{P}, \mathcal{R}_0, \mathcal{R}_1$ are continuous paths of $\mathfrak{n} \times \mathfrak{n}$ real symmetric matrices on $[0, T]$, $\sigma\in\mathbb{C}$. Instead of Legendre convexity condition, we assume that for any $t \in[0, T], \mathcal{P}(t)$ is invertible. Set $x=\mathcal{P} y'+\mathcal{Q} y,\ z=(x, y)^\mathcal{T}$, and the boundary condition is given by
$$
z(0) \in V_0, \quad z(T) \in V_1 .
$$ Then (\ref{eqst3.8}) corresponds to the Hamiltonian system
\bea
z'=J B_\sigma(t) z, \quad z(0) \in V_0, \quad z(T) \in V_1,\nonumber
\eea
with
\bea
B_\sigma(t)=\left(\begin{array}{cc}
\mathcal{P}^{-1}(t) & -\mathcal{P}^{-1}(t) \mathcal{Q}(t) \\
-\mathcal{Q}(t)^\mathcal{T} \mathcal{P}^{-1}(t) & \mathcal{Q}(t)^\mathcal{T} \mathcal{P}^{-1}(t) \mathcal{Q}(t)-\mathcal{R}(t)-\sigma \mathcal{R}_1(t)
\end{array}\right) .\nonumber
\eea

Denote by
\bea
\mathcal{A}_0=-\frac{d}{d t}\left(\mathcal{P} \frac{d}{d t}+\mathcal{Q}\right)+\mathcal{Q}^\mathcal{T} \frac{d}{d t}+\mathcal{R}_0,\nonumber
\eea
which is a self-adjoint operator on $L^2\left([0, T], \mathbb{C}^{\mathfrak{n}}\right)$ with domain
$$
\mathbb{D}_\mathfrak{n}(V_0, V_1)=\left\{y \in W^{2,2}\left([0, T]; \mathbb{C}^\mathfrak{n}\right), z(0) \in V_0, z(T) \in V_1\right\}.
$$
Then the relationship between eigenvalues of  Lagrangian system and Hamiltonian system can be stated as the following Proposition. Denote by $\Pi (\cdot)$ the set of eigenvalues of an operator.
\begin{prop}\label{propltoh}(See Corollary 3.9 of \cite{HOW19})
    Under the notations above,
\bea
\operatorname{det}\left(I+\mathcal{F}\left(B_0, D, E_{\mathfrak{n}}(V_{0}, V_{1})\right)\right)=\operatorname{det}\left(I+\mathcal{R}_1 \mathcal{A}_0^{-1}|_{D_{\mathfrak{n}}(V_{0},V_{1})}\right),\nonumber
\eea
consequently, $\Pi\left(\mathcal{F}\left(B_0, D, E_{\mathfrak{n}}(V_{0}, V_{1})\right)\right)=\Pi\left(\mathcal{R}_1 \mathcal{A}_0^{-1}|_{D_{\mathfrak{n}}(V_{0}, V_{1})}\right)$ with the same multiplicity.
\end{prop}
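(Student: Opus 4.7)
The plan is to establish the proposition by constructing an explicit bijection between eigenfunctions (and more generally generalized eigenfunctions) of the Sturm--Liouville operator pencil and those of the Hamiltonian operator pencil, and then upgrading the induced spectral identity to the Fredholm determinant identity via Lidskii's formula.

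\textbf{Step 1 (Legendre-type transform).} Given $y \in \mathbb{D}_\mathfrak{n}(V_0, V_1)$, set
\begin{equation}
    x = \mathcal{P} y' + \mathcal{Q} y, \qquad z = (x, y)^{\mathcal{T}}. \nonumber
\end{equation}
Using the invertibility of $\mathcal{P}(t)$ and the explicit block form of $B_\sigma$ displayed above the proposition, a direct computation (exactly the same as the one that reduces the Hamiltonian block system back to the SL equation) shows that $y$ satisfies $\mathcal{A}_0 y + \sigma \mathcal{R}_1 y = 0$ if and only if $z$ satisfies $z' = J B_\sigma(t) z$. Because the second coordinate of $z$ equals $y$ and the first coordinate is a linear function of $y, y'$ alone, the boundary conditions $z(0) \in V_0$, $z(T) \in V_1$ transport verbatim under the bijection, so we get a one-to-one correspondence between the two solution spaces.

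\textbf{Step 2 (spectral matching).} Write $B_\sigma = B_0 + \sigma D$ with $D = -\mathrm{diag}(0, \mathcal{R}_1)$. Then $\sigma \ne 0$ is a Hamiltonian eigenvalue iff $\sigma^{-1}$ is a nonzero eigenvalue of $\mathcal{F}(B_0, D, E_\mathfrak{n}(V_0, V_1)) = D(A|_{E_\mathfrak{n}(V_0, V_1)} - B_0)^{-1}$. Simultaneously, the Sturm--Liouville equation rewrites as $\mathcal{A}_0^{-1} \mathcal{R}_1 y = -\sigma^{-1} y$, so $\sigma \ne 0$ is an SL eigenvalue iff $-\sigma^{-1}$ is a nonzero eigenvalue of $\mathcal{A}_0^{-1} \mathcal{R}_1$; the latter has the same nonzero spectrum (with multiplicities) as $\mathcal{R}_1 \mathcal{A}_0^{-1}$ by the standard $AB$/$BA$ identity. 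Tracking the sign of $D$ (which exactly cancels the minus sign above) then yields $\Pi(\mathcal{F}) = \Pi(\mathcal{R}_1 \mathcal{A}_0^{-1})$ on nonzero eigenvalues.

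\textbf{Step 3 (determinant identity).} Both $\mathcal{F}$ and $\mathcal{R}_1 \mathcal{A}_0^{-1}$ are compact (in fact Hilbert--Schmidt under the standing hypotheses), so each has a purely discrete nonzero spectrum and, by Lidskii's theorem,
\begin{equation}
    \det(I + K) = \prod_j \bigl(1 + \lambda_j(K)\bigr), \nonumber
\end{equation}
where the product runs over nonzero eigenvalues counted with algebraic multiplicity. To pass from Step~2 to the claimed determinant identity I must therefore upgrade the bijection of eigenvectors to an isomorphism of generalized eigenspaces. This is done by iterating Step~1: the Legendre transform is a linear isomorphism, and a short induction shows it sends $\ker\bigl((\mathcal{A}_0 + \sigma \mathcal{R}_1)^k\bigr)$ bijectively onto $\ker\bigl((A - B_\sigma)^k\bigr)$ for every $k \ge 1$, giving equal algebraic multiplicities. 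Plugging into Lidskii's formula yields $\det(I + \mathcal{F}) = \det(I + \mathcal{R}_1 \mathcal{A}_0^{-1}|_{\mathbb{D}_\mathfrak{n}(V_0, V_1)})$.

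\textbf{Main obstacle.} The cleanest part is Step~1, which is essentially the derivation that produced $B_\sigma$ in the first place. The delicate step is Step~3: spectral equality alone is not enough, so I have to verify that the Legendre transform interacts correctly with the resolvent expansion at each nonzero eigenvalue, i.e. that the Jordan structures of the two compact operators at corresponding eigenvalues coincide. Once the bijection between generalized eigenspaces is set up carefully (using the invertibility of $\mathcal{P}$ to invert $y \mapsto (\mathcal{P}y' + \mathcal{Q}y, y)^{\mathcal{T}}$ at each generalized level), the identity follows cleanly.
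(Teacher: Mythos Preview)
The paper does not prove this proposition; it merely cites Corollary~3.9 of \cite{HOW19}. So the comparison is to the argument there, which (as is standard for Hill-type identities) exploits the block structure of $D$ rather than a Lidskii-type spectral matching.

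Your Steps~1 and~2 are fine: the Legendre correspondence $y\mapsto z=(\mathcal{P}y'+\mathcal{Q}y,\,y)^{\mathcal T}$ is exactly what produces $B_\sigma$, and it gives a bijection of kernels at each parameter value $\sigma$.

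The genuine gap is in Step~3. You assert that $\mathcal{F}$ and $\mathcal{R}_1\mathcal{A}_0^{-1}$ are Hilbert--Schmidt and then invoke Lidskii's formula $\det(I+K)=\prod_j(1+\lambda_j(K))$. But Lidskii's theorem (and the unregularized Fredholm determinant itself) requires $K$ to be \emph{trace class}. Here $(A-B_0)^{-1}$ is the resolvent of a first-order operator, so its singular values decay only like $n^{-1}$; multiplying by the bounded operator $D$ does not improve this, and $\mathcal{F}=D(A-B_0)^{-1}$ is in general only Hilbert--Schmidt, not trace class. The paper itself notes that $\mathcal{F}^m$ is trace class only for $m\ge 2$. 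So neither $\det(I+\mathcal{F})$ in the naive sense nor the product formula is available, and your argument as written does not close.

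What makes the identity work is the special form $D=\mathrm{diag}(0,\,-\mathcal{R}_1)$. In the $(x,y)$-splitting this forces $\mathcal{F}$ to be block lower-triangular: its range sits in $\{0\}\times L^2$, so
\[
I+\mathcal{F}=\begin{pmatrix} I & 0\\ C & I+E\end{pmatrix},
\]
and one checks directly from your Step~1 computation (solve the first Hamiltonian equation for $x$ and substitute) that the $(y,y)$-block $E$ is, up to sign, $\mathcal{R}_1\mathcal{A}_0^{-1}$. Since $\mathcal{A}_0$ is second order, $E$ \emph{is} trace class, its Fredholm determinant is well-defined, and the block-triangular structure gives $\det(I+\mathcal{F})=\det(I+E)$ outright. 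This route bypasses both the trace-class obstruction and the delicate matching of Jordan structures that you flag as the main obstacle; neither is actually needed.
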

\begin{rem}\label{detm}
Proposition \ref{propltoh} is also true for the $S$-periodic boundary condition, see \cite{HOW15}.
\end{rem}

\subsection{Index theory and stability criteria via trace formula}\label{sec4.1}

Let $\{\mathcal{O}(s), s\in [0,1]\}$ be a continuous path of self-adjoint Fredholm operators on a Hilbert space $\mathcal{H}$. The spectral flow of path $\{\mathcal{O}(s), s\in [0,1]\}$ denoted by $\mathrm{Sf}(\{\mathcal{O}(s), s\in [0,1]\})$ counts the net change in the number of negative eigenvalues of $\mathcal{O}(s)$ as $s$ goes from $0$ to $1$, where the enumeration follows from the rule that each negative eigenvalue crossing to the positive axis contributes $+1$ and each positive eigenvalue crossing to the negative axis contributes $-1$, and for each crossing, the multiplicity of eigenvalue is counted.

For Hamiltonian systems, let $\mathcal{O}(s)=A-B_s$, $s\in [0,1]$, where $B_s\in C([0,T],\mathcal{S}(2\mathfrak{n}))$. We can define the relative Morse index of $A-B_0$ and $A-B_1$ as
\be
\mathcal{I}(A-B_0,A-B_1)=-\mathrm{Sf}(\{A-B_s,s\in [0,1]\}).
\ee

It can be roughly understood as measuring the dimension difference of maximal negative definite subspaces of $A-B_0$ and $A-B_1$ as $s$ goes form $0$ to $1$. Usually, the dimensions might be infinite, but $\mathcal{I}(A-B_0,A-B_1)$ could be finite.

 We list some fundamental properties of relative Morse index here, for details please refer to \cite{HOW15}.
\begin{prop}\label{prop3.2}
    (1) For $B_0$, $B_1$, $B_2$, then
    \be
\mathcal{I}(A-B_0,A-B_1)+\mathcal{I}(A-B_1,A-B_2)=\mathcal{I}(A-B_0,A-B_2);\nonumber
    \ee
    (2) Let $D=B_1-B_0$ and $B_s=B_0+sD$, let $\kappa =\{s_0\in [0,1] | \ker(A-B_{s_0})\neq 0\}$, then
    \be
\mathcal{I}(A-B_0,A-B_1)\leq \sum_{s_0\in\kappa} \dim(\ker(A-B_{s_0})); \nonumber
    \ee
    (3) Suppose $D_1\leq D\leq D_2$, then
    \be
\mathcal{I}(A-B,A-B-D_1)\leq \mathcal{I}(A-B,A-B-D)\leq \mathcal{I}(A-B,A-B-D_2). \nonumber
    \ee
\end{prop}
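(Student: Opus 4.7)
The plan is to derive all three properties from the definition
$\mathcal{I}(A-B_0,A-B_1) = -\mathrm{Sf}(\{A-B_s,\ s\in[0,1]\})$
together with the standard toolbox for the spectral flow of a continuous path of self-adjoint Fredholm operators with compact resolvent: concatenation-additivity, homotopy invariance with fixed endpoints, and a sign rule at each eigenvalue crossing. The family $\{A-B_s\}$ is of this type whenever $s\mapsto B_s\in C([0,T],\mathcal{S}(2\mathfrak{n}))$ varies continuously, because $B$ acts as a bounded multiplication operator while $A$ has compact resolvent on $E_{\mathfrak{n}}(S)$, so $A-B_s$ is a continuous path of self-adjoint Fredholm operators.

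For (1), I would take any continuous path $\beta_1$ from $A-B_0$ to $A-B_1$ and any continuous path $\beta_2$ from $A-B_1$ to $A-B_2$ (for example the two affine paths), concatenate them to obtain a path $\beta_1\ast\beta_2$ from $A-B_0$ to $A-B_2$, and then apply concatenation-additivity
\[
\mathrm{Sf}(\beta_1\ast\beta_2) = \mathrm{Sf}(\beta_1)+\mathrm{Sf}(\beta_2).
\]
By homotopy invariance with fixed endpoints, $\mathrm{Sf}(\beta_1\ast\beta_2)$ coincides with the spectral flow along the affine path from $A-B_0$ to $A-B_2$. Multiplying through by $-1$ yields the claimed additive identity.

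For (2), along the affine family $\{A-B_s\}_{s\in[0,1]}$ with $B_s=B_0+sD$, zero eigenvalues occur precisely at the set $\kappa$, which is finite because $s\mapsto A-B_s$ is a real-analytic path of Fredholm operators on a compact interval. At each crossing $s_0\in\kappa$ the net signed contribution to the spectral flow is bounded in absolute value by $\dim\ker(A-B_{s_0})$, hence
\[
\bigl|\mathrm{Sf}(\{A-B_s\})\bigr| \le \sum_{s_0\in\kappa}\dim\ker(A-B_{s_0}).
\]
If the crossings are not regular in the sense of the crossing form, a generic small perturbation of the path restores regularity without altering either side of the inequality, by homotopy invariance, so the general case follows. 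This gives the stated bound on $\mathcal{I}(A-B_0,A-B_1)$.

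For (3), the decisive ingredient is monotonicity: along the affine path $s\mapsto A-B-D_1-s(D-D_1)$ with $D-D_1\ge 0$, the family is non-increasing as a quadratic form, so by min--max each eigenvalue is a non-increasing function of $s$ and every crossing carries the positive-to-negative sign. Hence $\mathrm{Sf}\le 0$ and $\mathcal{I}(A-B-D_1, A-B-D)\ge 0$. Combining with (1) via
\[
\mathcal{I}(A-B, A-B-D) = \mathcal{I}(A-B, A-B-D_1)+\mathcal{I}(A-B-D_1, A-B-D)
\]
gives the left inequality, and the same argument applied to $D_2-D\ge 0$ gives the right one. I expect the main obstacle to be in (2), where one must make the crossing-form counting watertight even when the crossings along the affine path are non-regular; this is where the perturbation-and-homotopy-invariance step enters and must be combined carefully with the finiteness of $\kappa$. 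Once that is in place, (1) and (3) are direct.
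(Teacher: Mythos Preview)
The paper does not prove this proposition; it merely lists the three properties and refers the reader to \cite{HOW15} for details. Your argument is the standard one and is correct in outline: additivity and homotopy invariance of spectral flow give (1), the local crossing bound gives (2), and monotonicity of eigenvalues under a non-negative perturbation gives (3).

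One small point in your treatment of (2): the sentence ``a generic small perturbation of the path restores regularity without altering either side of the inequality'' is not quite right as stated. Homotopy invariance preserves the left-hand side $\mathcal{I}(A-B_0,A-B_1)$, but the right-hand side $\sum_{s_0\in\kappa}\dim\ker(A-B_{s_0})$ is \emph{not} a homotopy invariant of the path---perturbing can create or destroy tangential crossings and change this sum. The clean argument avoids perturbation entirely: since $s\mapsto A-B_0-sD$ is a real-analytic family of self-adjoint operators with compact resolvent, Kato's theory gives finitely many real-analytic eigenvalue branches near zero, the crossing set $\kappa$ is finite, and at each $s_0\in\kappa$ the local contribution to the spectral flow lies in $\{-k,\dots,k\}$ with $k=\dim\ker(A-B_{s_0})$. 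Summing gives $|\mathrm{Sf}|\le\sum_{s_0\in\kappa}\dim\ker(A-B_{s_0})$ directly, which is what you need.
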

On the other hand, Maslov-type index theory serves as an important tool for studying stability.  
We give a briefly review the Maslov-type index,  the detail  could be  found  in \cite{L02}.
Let $\Sp(2\mathfrak{n})$ be the set of $2\mathfrak{n}\times2\mathfrak{n}$ real symplectic matrix. For $\tau>0$ we are interested in paths in $\Sp(2\mathfrak{n})$:
\begin{equation} \label{sym path}
\mathcal{P}_{\tau}(2\mathfrak{n})=\left\{\gamma \in C([0, \tau], \operatorname{Sp}(2\mathfrak{n})) \mid \gamma(0)=I_{2\mathfrak{n}}\right\}. \nonumber
\end{equation}
For any $\omega\in\mathbb{U}$, the following  $\omega$-degenerate hypersurface of codimension one in
 $\Sp(2\mathfrak{n})$ is defined  \cite{L02}:
$$\Sp(2\mathfrak{n})_\omega^0=\{M\in \Sp(2\mathfrak{n}) | \det(M-\omega I_{2\mathfrak{n}})=0 \}.   $$
Moreover, the $\omega$-regular set of $\Sp(2\mathfrak{n})$ is defined by $\Sp(2\mathfrak{n})^*_{2\mathfrak{n}}=\Sp(2n)\setminus \Sp(2\mathfrak{n})_\omega^0$.

For $M\in \Sp(2\mathfrak{n})_\omega^0$, we define a co-orientation of  $\Sp(2\mathfrak{n})_\omega^0$
at $M$ by the positive direction $\frac{d}{dt}Me^{tJ}|_{t=0}$ of the path $Me^{tJ}$ with $|t|$ sufficiently small.
Now will give the definition of $\omega$-index \cite{L02}.

\begin{defi}\label{def:Maslov-type index} For $\omega\in \mathbb{U}$, $\gamma\in \mathcal{P}_{\tau}(2\mathfrak{n})$, \textbf{the $\omega$-index} of $\gamma$  can be defined as
$$i_\omega(\gamma)= \begin{cases}[e^{-\epsilon J}\gamma: \Sp(2\mathfrak{n})_\omega^0]-\mathfrak{n}, & \text { if } \omega=1 \\ [e^{-\epsilon J}\gamma: \Sp(2\mathfrak{n})_\omega^0], & \text { if } \omega\neq1.\end{cases}
 $$ For $\epsilon>0$ small enough, where $[\cdot : \cdot]$ is the intersection number. We also denote the nullity of $\gamma$ by
 $$\nu_\omega(\gamma)=\dim_{\mathbb{C}}\ker_{\mathbb{C}}(\gamma(\tau)-\omega I).$$
 \end{defi}
The following symplectic matrices were introduced as basic normal forms:
\begin{equation}\label{basic normal forms}
D(\lambda)=\left(\begin{array}{cc}
\lambda & 0 \\
0 & \lambda^{-1}
\end{array}\right),\ \
N_{1}(\lambda, a)=\left(\begin{array}{cc}
\lambda & a \\
0 & \lambda
\end{array}\right), \ \
R(\vartheta)=\left(\begin{array}{cc}
\cos \vartheta & -\sin \vartheta \\
\sin \vartheta & \cos \vartheta
\end{array}\right),\nonumber
\end{equation}
\begin{equation}\label{basic normal forms2}
N_{2}(e^{\sqrt{-1}\vartheta},b)=\left(\begin{array}{cc}
R(\vartheta) & b \\
0 & R(\vartheta)
\end{array}\right),\ \ \text{with} \ \
b=\left(\begin{array}{cc}
b_{1} & b_{2} \\
b_{3} & b_{4}
\end{array}\right),
\nonumber
\end{equation}
where $\lambda\in \mathbb{R}\setminus\{0\}$, $\vartheta\in(0,\pi)\cup(\pi,2\pi)$, $a=\pm1,0$ and $b_{i}\in\mathbb{R}, b_{1}\neq b_{2}$.

Let $\Omega_{0}(M)$ be the path-connected component containing $M=\gamma(\tau)$ of the set
$$
\Omega(M)=\{L\in\Sp(2\mathfrak{n})\,|\,\sigma(N)\cap\mathbb{U}=\sigma(M)\cap\mathbb{U}, v_{\lambda}(L)=v_{\lambda}(M), \forall \lambda\in\sigma(M)\cap\mathbb{U}\}
$$
where $\sigma(\cdot)$ denotes the spectrum of a matrix, that is the set of its total eigenvalues.  Here $\Omega_{0}(M)$
is called the homotopy component of $M$ in $\Sp(2\mathfrak{n})$. For a continuous family of paths $\ga_{s}(t)$ with
$(s,t)\in[0,1]\times[0,T], \ga_{s}(T)\in \Omega_{0}(\ga_{0}(T))$, then $i_\omega(\ga_s)$ is independent of $s$.
Then any $M\in\Sp(2\mathfrak{n})$ can be connected to $L$ in $\Omega_{0}(M)$, we denote it briefly as the symplectic similarity, where
$L=M_{1}\diamond \cdots \diamond M_{j}$
with $M_{i}, i =1,...,j$ in basic normal form.
The following Theorem is a stability criterion obtained through index theory.
\begin{thm}(See (9.3.3) on page 24 of \cite{L02} with $\omega=-1$)\label{thm3.5}
Let $\gamma(t)\in C([0,T],\mathrm{Sp}(2\mathfrak{n}))$ be a path of fundamental solution of linear Hamiltonian system, then
when $|i_{1}(\ga)-i_{-1}(\ga)|=\mathfrak{n}$, $\ga(T)$ is spectral stable. Moreover, if $i_{-1}(\ga)=\mathfrak{n}$, $i_{1}(\ga)=0$, then $\ga(T)$ is linear stable and
$\ga(T)$ is symplectic similar to $R(\vartheta_1)\diamond R(\vartheta_2)\diamond\cdots\diamond R(\vartheta_n)\ \ \text{for some}\ \ \vartheta_{i}\in(\pi,2\pi)$.
\end{thm}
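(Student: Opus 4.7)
The statement is a standard consequence of Long's Maslov-type index theory, and the plan is to reduce to a block-by-block analysis on the basic normal form decomposition of the endpoint $M := \gamma(T)$. Since the $\omega$-index is invariant under homotopies within the component $\Omega_0(M)$, one may assume $M$ is already a symplectic sum of basic normal forms $M_1 \diamond \cdots \diamond M_k$, and simultaneously deform $\gamma$ into a symplectic sum $\gamma_1 \diamond \cdots \diamond \gamma_k$ of paths ending at these blocks. The symplectic additivity of the $\omega$-index then gives
$$
i_\omega(\gamma) = \sum_{j=1}^k i_\omega(\gamma_j), \qquad \omega \in \mathbb{U},
$$
which reduces the global problem to bounding, block by block, the contribution $i_{-1}(\gamma_j) - i_1(\gamma_j)$.

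Next I would compute these per-block contributions using the intersection-number definition of $i_\omega$ together with Long's splitting numbers $S^{\pm}_{M}(\omega)$. The outcome of the case analysis is that each $2$-dimensional basic block contributes at most $1$ to the difference $i_{-1} - i_1$: hyperbolic blocks $D(\lambda)$ and complex-saddle blocks $N_2(e^{\sqrt{-1}\vartheta}, b)$ contribute $0$; the elliptic block $R(\vartheta)$ contributes $0$ for $\vartheta \in (0,\pi)$ and $1$ for $\vartheta \in (\pi,2\pi)$; and the Jordan blocks $N_1(\pm 1, a)$ contribute at most $1$ but, crucially, never in a way compatible with simultaneously saturating the lower bound on $i_1$ and the upper bound on $i_{-1}$. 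Summing these contributions yields the universal inequality $|i_{-1}(\gamma) - i_1(\gamma)| \leq \mathfrak{n}$.

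Finally I would read off the conclusions from the saturation analysis. When $|i_{-1}(\gamma) - i_1(\gamma)| = \mathfrak{n}$, every basic block must saturate its per-block bound, which forces every $M_j$ to have eigenvalues on $\mathbb{U}$; in particular $M$ has spectrum inside $\mathbb{U}$, so $\gamma(T)$ is spectrally stable. In the sharper case $i_{-1}(\gamma) = \mathfrak{n}$ and $i_1(\gamma) = 0$, only the blocks $R(\vartheta_j)$ with $\vartheta_j \in (\pi,2\pi)$ can realise the saturation, so $M$ is semisimple and
$$
\gamma(T) \;\approx\; R(\vartheta_1) \diamond R(\vartheta_2) \diamond \cdots \diamond R(\vartheta_\mathfrak{n}), \qquad \vartheta_j \in (\pi, 2\pi),
$$
which yields linear stability and the claimed normal form.

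The main obstacle is the treatment of the $N_1(\pm 1, a)$ blocks: without extra constraints, a non-semisimple Jordan block at $\pm 1$ can carry a nonzero contribution to $i_{-1} - i_1$ while leaving $M$ only spectrally (not linearly) stable. Ruling this out at the saturation step requires the precise values of $S^{\pm}_M(1)$ and $S^{\pm}_M(-1)$ for Jordan blocks, together with the observation that any non-trivial such block consumes at least one unit of $i_1$ (resp.\ leaves $i_{-1}$ strictly below $\mathfrak{n}$). Once this delicate bookkeeping is in place, the remainder of the argument is the routine additive summation above.
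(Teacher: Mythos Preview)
The paper does not supply its own proof of this theorem: it is quoted directly from Long's monograph \cite{L02} (the reference ``(9.3.3)'' in the statement), so there is no in-paper argument to compare against. Your outline---pass to the basic normal form decomposition of $M=\gamma(T)$ via homotopy invariance on $\Omega_0(M)$, use symplectic additivity of $i_\omega$, and run a block-by-block saturation analysis with splitting numbers---is exactly the machinery Long develops, so your approach coincides with the cited one; your caution about the Jordan blocks $N_1(\pm1,a)$ is well placed, since that is precisely where the exact values of $S^\pm_M(\pm1)$ are needed to exclude non-semisimple blocks in the linear-stability case.
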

Usually, $i_{\omega}(\gamma)$ is not easy to calculate, but we have the following Proposition, which establishes the connection between the relative
Morse index and the Maslov-type index.
\begin{prop}\label{prop3.6}
    Let $\gamma$ be the fundamental solution of (\ref{1.9e}), then
\bea
\mathcal{I}(A|_{E_\mathfrak{n}(S)},A|_{E_\mathfrak{n}(S)}-B)=\left\{\begin{aligned}
    &i_1(\gamma)+\mathfrak{n}, \quad S=I,\\
    &i_{-1}(\gamma), \quad\quad  S=-I.
\end{aligned}\right. \nonumber
\eea
\end{prop}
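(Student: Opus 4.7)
The plan is a homotopy-and-base-case argument. Consider the linear path $B_{s} := sB$ for $s \in [0,1]$, and let $\gamma_{s}(t)$ be the fundamental solution of $z'(t) = s J B(t) z(t)$, so that $\gamma_{0} \equiv I_{2\mathfrak{n}}$ and $\gamma_{1} = \gamma$. By definition of the relative Morse index,
\[
\mathcal{I}\bigl(A|_{E_\mathfrak{n}(S)},\, A|_{E_\mathfrak{n}(S)} - B\bigr) \;=\; -\,\mathrm{Sf}\bigl(\{A|_{E_\mathfrak{n}(S)} - sB\}_{s\in[0,1]}\bigr).
\]
Setting $\omega := 1$ when $S = I$ and $\omega := -1$ when $S = -I$, any nontrivial element $z \in \ker(A|_{E_\mathfrak{n}(S)} - s_{0} B)$ corresponds, via $z(T) = \gamma_{s_{0}}(T) z(0)$ together with the boundary condition $z(0) = S z(T)$, to an eigenvector of $\gamma_{s_{0}}(T)$ with eigenvalue $\omega$. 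This yields a canonical bijection of kernels and identifies the crossings of the operator path with the intersections of $\gamma_{s}$ with the $\omega$-degenerate hypersurface $\Sp(2\mathfrak{n})_{\omega}^{0}$.

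The core step is to show that at every regular crossing $s_{0}$, the contribution to the spectral flow and the contribution to $i_{\omega}(\gamma_{s})$ agree up to sign. On the operator side, the jump of $\mathrm{Sf}$ is the signature of the quadratic form $z \mapsto -\langle Bz, z\rangle_{L^{2}}$ on $\ker(A - s_{0}B)$. On the symplectic side, differentiating $\gamma_{s}' = sJB\gamma_{s}$ in $s$ and invoking the crossing-form description of the Maslov-type index from \cite{L02} shows that the jump of $i_{\omega}(\gamma_{s})$ at $s_{0}$ is the signature of an explicit quadratic form on $\ker(\gamma_{s_{0}}(T) - \omega I)$; a direct computation identifies this form, under the bijection $z \leftrightarrow z(0)$, with $\langle Bz, z\rangle_{L^{2}}$. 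Reducing to generic simple crossings by a small perturbation of $B$ within the homotopy class then gives $-\mathrm{Sf} = i_{\omega}(\gamma_{1}) - i_{\omega}(\gamma_{0})$.

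It remains to evaluate the base case. At $s = 0$, the constant path $\gamma_{0} \equiv I_{2\mathfrak{n}}$ plugged into Definition \ref{def:Maslov-type index} gives $i_{1}(\gamma_{0}) = -\mathfrak{n}$ (the $-\mathfrak{n}$ normalization in the $\omega = 1$ case is precisely what produces the additive $\mathfrak{n}$ in the conclusion) and $i_{-1}(\gamma_{0}) = 0$. Since $\mathcal{I}(A|_{E_\mathfrak{n}(S)}, A|_{E_\mathfrak{n}(S)}) = 0$ trivially, combining the above yields $\mathcal{I} = i_{1}(\gamma) + \mathfrak{n}$ when $S = I$ and $\mathcal{I} = i_{-1}(\gamma)$ when $S = -I$. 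The main obstacle is the crossing-form matching in the middle step: careful bookkeeping of signs and conventions, together with the standard perturbation trick for non-transverse crossings, is the delicate part, and one must separately keep track of the shift from Definition \ref{def:Maslov-type index} in the $\omega = 1$ case.
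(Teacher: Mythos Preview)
The paper does not give its own proof of this proposition; it is stated without argument, as a known fact from the index-theory literature (the surrounding text directs the reader to \cite{HOW15} for details on the relative Morse index, and the Maslov-type index machinery is imported from \cite{L02}). So there is no paper proof to compare against directly.

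That said, your homotopy-and-crossing-form outline is exactly the standard derivation of this identity and is correct as a sketch. A couple of remarks. First, the base-case values $i_{1}(\gamma_{0})=-\mathfrak{n}$ and $i_{-1}(\gamma_{0})=0$ for the constant path at $I_{2\mathfrak{n}}$ follow from Definition~\ref{def:Maslov-type index} precisely as you say, since the perturbed point $e^{-\epsilon J}$ avoids both degenerate hypersurfaces. Second, your ``crossing-form matching'' step is the genuinely nontrivial part: one must check that the crossing form of the operator family $s\mapsto A-sB$ on $\ker(A-s_{0}B)$ coincides (up to the expected sign) with the $\omega$-crossing form of the symplectic-path family $s\mapsto \gamma_{s}$ on $\ker(\gamma_{s_{0}}(T)-\omega I)$. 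This is done in detail in \cite{HOW15} and in Long's book \cite{L02}; your description of the mechanism (differentiate the fundamental-solution equation in $s$, identify kernels via $z\leftrightarrow z(0)$, reduce to regular crossings by perturbation) is the right one, but in a fully rigorous write-up you would either carry it out or cite it. With those caveats, your argument is sound and matches the approach underlying the references the paper relies on.
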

Now, we can give the following important stability criteria via the trace formula, this is useful in our applications.
\begin{thm}\label{thm3.3}
    Suppose $A-B$ is non-degenerate on its domain $E$ and $D(A-B)^{-1}$ has only real eigenvalues on $E$. If  $Tr(\mathcal{F}^2(B,D;E))<1$, then $A-B-\sigma D$ is non-degenerate on $E$ and
    \be
\mathcal{I}(A-B,A-B-\sigma D)=0,\ \ \text{for}\ \ \sigma\in[0,1]. \nonumber
    \ee
In particular, for $E=E_{\mathfrak{n}}(\pm I)$, we have
     \bea
i_1(\gamma_{\sigma})=i_1(\gamma_0),\quad i_{-1}(\gamma_{\sigma})=i_{-1}(\gamma_0). \nonumber
     \eea
     Morover, if $|i_1(\gamma_0)-i_{-1}(\gamma_0)|=\mathfrak{n}$ holds, then $\gamma_{\sigma}(T)$ is spectrally stable and if
     $i_1(\gamma_0)=0, i_{-1}(\gamma_0)=\mathfrak{n}$, then $\gamma_{\sigma}(T)$ is linearly stable and
$
\gamma_{\sigma}(T)\approx R(\vartheta_1)\diamond R(\vartheta_2)\diamond\cdots\diamond R(\vartheta_n)$ for some  $\vartheta_{i}\in(0,\pi)$.
\end{thm}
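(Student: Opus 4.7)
The plan is to exploit the hypothesis $Tr(\mathcal{F}^2)<1$ to prevent any generalized eigenvalue of the spectral problem $Az=Bz+\sigma Dz$ from entering $[0,1]$, and then to transport this non-degeneracy into invariance of the Maslov-type indices through the relative Morse index.

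First I would recall from (\ref{eq3.7}) the identity
\[
Tr(\mathcal{F}^{2}) \;=\; \sum_{j}\lambda_{j}^{-2},
\]
where the $\lambda_{j}$ are the nonzero generalized eigenvalues of (\ref{eq3.1}); equivalently, the $\lambda_{j}^{-1}$ are the nonzero eigenvalues of $\mathcal{F}=D(A-B)^{-1}$. By the reality assumption each $\lambda_{j}\in\mathbb{R}\setminus\{0\}$, so every summand $\lambda_{j}^{-2}$ is strictly positive. The inequality $\sum_{j}\lambda_{j}^{-2}<1$ then forces $\lambda_{j}^{-2}<1$, i.e.\ $|\lambda_{j}|>1$, for every $j$. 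In particular $[0,1]$ contains no generalized eigenvalue, and $A-B-\sigma D$ remains non-degenerate throughout $\sigma\in[0,1]$.

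Next, because $\{A-B-sD\}_{s\in[0,1]}$ is a continuous path of self-adjoint Fredholm operators with trivial kernel for every $s$, its spectral flow vanishes; by the definition of the relative Morse index this yields $\mathcal{I}(A-B,\,A-B-\sigma D)=0$ for all $\sigma\in[0,1]$. Specialising to $E=E_{\mathfrak{n}}(\pm I)$ and invoking the additivity in Proposition \ref{prop3.2}(1),
\[
\mathcal{I}(A,A-B-\sigma D)=\mathcal{I}(A,A-B)+\mathcal{I}(A-B,A-B-\sigma D)=\mathcal{I}(A,A-B),
\]
which via Proposition \ref{prop3.6} translates directly into $i_{1}(\gamma_{\sigma})=i_{1}(\gamma_{0})$ when $S=I$ and $i_{-1}(\gamma_{\sigma})=i_{-1}(\gamma_{0})$ when $S=-I$. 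The remaining spectral/linear stability statements are then immediate: feeding the indices of $\gamma_{\sigma}$, which coincide with those of $\gamma_{0}$, into Theorem \ref{thm3.5} yields both the spectral stability under $|i_{1}(\gamma_{0})-i_{-1}(\gamma_{0})|=\mathfrak{n}$ and the symplectic normal form $R(\vartheta_{1})\diamond\cdots\diamond R(\vartheta_{\mathfrak{n}})$ under $i_{1}(\gamma_{0})=0$, $i_{-1}(\gamma_{0})=\mathfrak{n}$.

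The only delicate step is the first one: passing from the scalar estimate $\sum_{j}\lambda_{j}^{-2}<1$ to the pointwise bound $|\lambda_{j}|>1$. The reality of the spectrum of $\mathcal{F}$ is indispensable here, since without it complex-conjugate pairs could produce cancellations that keep $Tr(\mathcal{F}^{2})$ small while individual $|\lambda_{j}|$ are less than or equal to $1$; this is precisely why the hypothesis is included. Once this positivity/monotonicity step is in hand, the rest of the proof is bookkeeping — combining the additivity of the relative Morse index with its dictionary to the Maslov-type indices provided by Propositions \ref{prop3.2} and \ref{prop3.6}, and then citing Theorem \ref{thm3.5}.
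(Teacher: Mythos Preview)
Your proof is correct and follows essentially the same approach as the paper's own argument: use the reality of the eigenvalues of $\mathcal{F}$ to make each term of $\sum_j\lambda_j^{-2}$ positive, deduce $|\lambda_j|>1$ from $Tr(\mathcal{F}^2)<1$, conclude non-degeneracy along the whole segment and hence vanishing of the relative Morse index, then translate via Propositions \ref{prop3.2} and \ref{prop3.6} and invoke Theorem \ref{thm3.5}. Your write-up is in fact slightly more explicit about the spectral-flow step than the paper's version.
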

\begin{proof}
    Denote by $\{1/\lambda_j\}_{j\in \mathbb{Z}}$ the eigenvalues of $\mathcal{F}(B,D;E)$, then $\lambda_j\in \mathbb{R}$. From (\ref{eq3.7}),
    \bea
Tr(\mathcal{F}^2(B,D;E))=\sum_{j}\frac{1}{\lambda_j^2}>0.\nonumber
    \eea
    If  $Tr(\mathcal{F}^2(B,D;E))<1$, then for $\forall j$, we have $|\lambda_j|^2>1$, hence $A-B-\sigma D$ is non-degenerate on $E$ and $$\mathcal{I}(A-B,A-B-\sigma D)=0,\ \ \text{for}\ \ \sigma\in[0,1].$$
In particular, for $E=E_{\mathfrak{n}}(\pm I)$, from Proposition \ref{prop3.2} $(1)$ and Theorem \ref{thm3.3}, take $S=\pm I$, we have
     \bea
    \begin{aligned}
        \mathcal{I}(A|_{E_{\mathfrak{n}}(S)},A|_{E_{\mathfrak{n}}(S)}-B-\sigma D) =\mathcal{I}(A|_{E_{\mathfrak{n}}(S)},A|_{E_{\mathfrak{n}}(S)}-B)+\mathcal{I}(A|_{E_{\mathfrak{n}}(S)}-B,A|_{E_{\mathfrak{n}}(S)}-B-\sigma D)
        =\mathcal{I}(A|_{E_{\mathfrak{n}}(S)},A|_{E_{\mathfrak{n}}(S)}-B).
    \end{aligned} \nonumber
    \eea
    This combine with Proposition \ref{prop3.6}, we obtain
    \bea
        i_{1}(\gamma_{\sigma})&=&\mathcal{I}(A|_{E_{\mathfrak{n}}(I)},A|_{E_{\mathfrak{n}}(I)}-B-\sigma D)-\mathfrak{n}=\mathcal{I}(A|_{E_{\mathfrak{n}}(I)},A|_{E_{\mathfrak{n}}(I)}-B)
        -\mathfrak{n}=i_{1}(\gamma_0).\nonumber\\
         i_{-1}(\gamma_{\sigma})&=&\mathcal{I}(A|_{E_{\mathfrak{n}}(-I)},A|_{E_{\mathfrak{n}}(-I)}-B-\sigma D)
        =\mathcal{I}(A|_{E_{\mathfrak{n}}(-I)},A|_{E_{\mathfrak{n}}(-I)}-B)
        =i_{-1}(\gamma_0),\nonumber
    \eea
    By Theorem  \ref{thm3.5}, if $|i_1(\gamma_0)-i_{-1}(\gamma_0)|=n$, $\gamma_{\sigma}(T)$ is spectrally stable and if
     $i_1(\gamma_0)=0, i_{-1}(\gamma_0)=\mathfrak{n}$, then $\gamma_{\sigma}(T)$ is linearly stable for $\sigma\in[0,1]$
     and
$
\gamma_{\sigma}(T)\approx R(\vartheta_1)\diamond R(\vartheta_2)\diamond\cdots\diamond R(\vartheta_n)$ for some $\vartheta_{i}\in(\pi,2\pi)$.
\end{proof}
Especially for $D>0$ or $D<0$, $D(A-B)^{-1}$ is self-adjoint on $E$, hence its eigenvalues are real, then we have the following Corollary from Theorem \ref{thm3.3}.
\begin{cor}\label{thm3.4}
    Suppose $A-B$ is non-degenerate, $D > 0$ and $Tr(G_1^2)-$ $2 Tr\left(G_2\right)<1$ in domain $E$, then $A-B-\sigma D$ is non-degenerate, and $\mathcal{I}(A-B, A-B-\sigma D)=0$
for $\sigma\in[0,1]$. In particular, for $E=E_{\mathfrak{n}}(\pm I)$, we have
     \bea
i_1(\gamma_{\sigma})=i_1(\gamma_0),\quad i_{-1}(\gamma_{\sigma})=i_{-1}(\gamma_0). \nonumber
     \eea
     If $|i_1(\gamma_0)-i_{-1}(\gamma_0)|=\mathfrak{n}$ holds, then $\gamma_{\sigma}(T)$ is spectrally stable and if
     $i_1(\gamma_0)=0, i_{-1}(\gamma_0)=\mathfrak{n}$, then $\gamma_{\sigma}(T)$ is linearly stable
     and
$
\gamma_{\sigma}(T)$ is symplectic similar to $R(\vartheta_1)\diamond R(\vartheta_2)\diamond\cdots\diamond R(\vartheta_n)$ for some  $\vartheta_{i}\in(\pi,2\pi)$. Similar for the case $D<0$.
\end{cor}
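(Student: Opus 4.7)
The plan is to realize this corollary as a direct specialization of Theorem \ref{thm3.3}. That theorem has two hypotheses on $(B,D,E)$: that $D(A-B)^{-1}$ has only real eigenvalues, and that $\operatorname{Tr}(\mathcal{F}^{2}(B,D;E))<1$. The corollary strengthens the first hypothesis to $D>0$ (or $D<0$) and rewrites the second using the identity $\operatorname{Tr}(\mathcal{F}^{2})=\operatorname{Tr}(G_{1}^{2})-2\operatorname{Tr}(G_{2})$ from \eqref{2orTrace}. So the proof should consist of verifying these two hypotheses under the stronger assumption and then quoting Theorem \ref{thm3.3} verbatim.

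For the spectral hypothesis, the key observation is that $A-B$ is self-adjoint with compact resolvent, so under the non-degeneracy assumption the inverse $(A-B)^{-1}$ is a self-adjoint bounded operator. When $D>0$ is positive definite, the square root $D^{1/2}$ exists and is invertible. One writes
\[
D(A-B)^{-1}=D^{1/2}\bigl(D^{1/2}(A-B)^{-1}D^{1/2}\bigr)D^{-1/2},
\]
which expresses $D(A-B)^{-1}$ as a similarity transform of the self-adjoint operator $D^{1/2}(A-B)^{-1}D^{1/2}$. Similar operators share spectra, so the eigenvalues of $D(A-B)^{-1}$ are all real. The $D<0$ case is identical after replacing $D$ by $-D$ and absorbing the sign into $\sigma$.

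For the trace hypothesis, formula \eqref{2orTrace} gives $\operatorname{Tr}(\mathcal{F}^{2})=\operatorname{Tr}(G_{1}^{2})-2\operatorname{Tr}(G_{2})$, which is exactly the quantity assumed to be less than $1$. With both hypotheses of Theorem \ref{thm3.3} now verified, the non-degeneracy of $A-B-\sigma D$, the vanishing of the relative Morse index $\mathcal{I}(A-B,A-B-\sigma D)=0$ for $\sigma\in[0,1]$, the persistence of the Maslov-type indices $i_{\pm 1}(\gamma_{\sigma})=i_{\pm 1}(\gamma_{0})$ on $E=E_{\mathfrak{n}}(\pm I)$, and the stability conclusions all follow immediately from Theorem \ref{thm3.3}. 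The only subtlety I anticipate is the cosmetic one of keeping the $\pm$ signs straight for the $D<0$ branch; there are no genuine obstacles since everything reduces to Theorem \ref{thm3.3}.
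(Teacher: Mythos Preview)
Your proposal is correct and follows the paper's own approach: the paper simply remarks (in the sentence preceding the corollary) that for $D>0$ or $D<0$ the operator $D(A-B)^{-1}$ has real eigenvalues, and then invokes Theorem~\ref{thm3.3}. Your similarity argument $D(A-B)^{-1}=D^{1/2}\bigl(D^{1/2}(A-B)^{-1}D^{1/2}\bigr)D^{-1/2}$ is in fact more precise than the paper's phrasing (which loosely calls $D(A-B)^{-1}$ ``self-adjoint''), but the substance is identical.
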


\section{Quantitative analysis of ERE in planar $N$-body problem}
In this section, we first state the results of Meyer-Schmidt reduction for ERE, it's useful in stability studies. Then we will calculate the fundamental solution of linear Hamiltonian system at Keplerian orbit and prove the main Theorem \ref{thm1.1} which give a quantitative analysis to the stable region of ERE in planar $N$-body problem.
\subsection{Meyer-Schmidt reduction of ERE}\label{Sec2}
For the ERE in the planar $N$ body problem, from Meyer and Schmidt in \cite{MS05},
there are two four-dimensional invariant symplectic subspaces, $E_1$ and $E_2$
associated to the translation symmetry, dilation and rotation symmetry of the system and the remaining part $E_3$ such that $\mathbb{R}^{4N}=E_1\oplus E_2\oplus E_3$ . More precisely,
Meyer and Schmidt introduced a coordinate transformation $\mathbb{A}$ from the central configuration coordinates to the original coordinates, which depends on the central configuration. The linear transformation has the form $Q = \mathbb{A}X,  P = \mathbb{A}^{-\mathcal{T}} Y$ with
$X = (g, z,w)\in \mathbb{R}^2\times\mathbb{R}^2\times\mathbb{R}^{2N-4}$ and
$Y =(G,Z,W)\in\mathbb{R}^2\times\mathbb{R}^2\times\mathbb{R}^{2N-4}$, where
$\mathbb{A}\in GL(\R^{2N})$ satisfies
\bea  \mathbb{J}_{N}\mathbb{A} = \mathbb{A}\mathbb{J}_{N}, \qquad \mathbb{A}^\mathcal{T}\mathcal{M}\mathbb{A} = I_{2N},  \label{AA}\nonumber\eea
After this transformation, for the linearized Hamiltonian system $(\ref{ga})$, $\mathcal{B}(t)=D^{2}H(x(t))$ in this new coordinate system  has the form
$\mathcal{B}(t)=\mathcal{B}_1(t)\oplus \mathcal{B}_2(t)\oplus \mathcal{B}_{3}(t)$, where $\mathcal{B}_i(t)=\mathcal{B}|_{E_i}(t)$.
The essential part $\mathcal{B}_3(t)$ is a path of $(4N-8)\times(4N-8)$ symmetric matrices which is
closely related to the linear stability of the ERE.
By taking the rotating coordinates and using the true anomaly $\theta$ as the variables, the equation of the essential part is,
\bea \dot{\ga}(\theta)=JB(\theta)\ga(\theta),  \quad  \ga(0)=I_{2k}, \label{ga3e}
\eea
with
\bea \label{eq2.4e}
B(\theta)=\left( \begin{array}{cccc} I_{k} & -\mathbb{J}_{k/2} \\
\mathbb{J}_{k/2} & I_{k}-\frac{\mathcal{R}}{1+e\cos\theta}
\end{array}\right),\quad \theta\in[0,2\pi],  \eea
where $k=2N-4$ and $e$ is the eccentricity,
\bea \mathcal{R}=I_{k}+\mathcal{D},\ \ \text{with}\ \ \mathcal{D}=\frac{1}{\lambda}\mathbb{A}^\mathcal{T}D^2U(a)\mathbb{A}\big|_{w\in\mathbb{R}^{k}},
      \ \ \text{and}\ \ \lambda=\frac{U(a)}{I(a)}, \label{hess}
\eea
where $w$ denotes the essential part.
In Section \ref{sec4}, we will see some important EREs, where $\mathcal{R}$ has explicit expressions. For instance,
\bea
\mathcal{R}_{L}=\left(\begin{array}{cc}
    \frac{3+\sqrt{9-\beta_{L}}}{2} & 0 \\
    0 & \frac{3-\sqrt{9-\beta_{L}}}{2}
\end{array}\right),\ \
\mathcal{R}_{E}=\left(\begin{array}{cc}
    2\beta_{E}+3 & 0 \\
    0 & -\beta_{E}
\end{array}\right),\ \ \nonumber
\eea
\bea
\mathcal{R}_{M}=R_{1}\oplus\ldots R_{[\frac{n}{2}]},\ \ R_l=I+\frac{1}{\mu}\mathcal{U}(l), 1\leq l\leq[\frac{n}{2}].\nonumber
\eea
correspond to the Lagrange, Euler solutions and the regular $(1+n)$-gon solution, respectively, where $\beta_{L}\in[0,9]$, $\beta_{E}\in[0,7]$ and $\mathcal{U}(l)$, $\mu$ are given by (\ref{2.17})-(\ref{gon-n}) and (\ref{n-gon co}), which depend on the mass parameter $\beta_{M}=1/m$.

One can see that when $\beta_{L}=\beta_{E}=0$, Lagrange solution and Euler solution in (\ref{hess}) can be reduced into
\bea
B(\theta)=B_{kep}(\theta)=\left( \begin{array}{cccc} I_{2} & -J_{2} \\
J_{2} & I_{2}-\frac{\mathcal{R}_{kep}}{1+e\cos\theta}
\end{array}\right),\ \ \mathcal{R}_{kep}=\left( \begin{array}{cccc} 3 & 0 \\
0 & 0
\end{array}\right)\label{kep2}
\eea
Also, for the regular $(1+n)$-gon ERE, when
$\beta_{M}=0$ (\,i.e $m=+\infty$), $\hat{B}_i(\theta), 1\leq i\leq [\frac{n}{2}]$ which is given in (\ref{2.13}) can be decomposed into the form (\ref{kep2}).

Subsequently, we will see that the useful form (\ref{kep2}) is precisely the representation of the linearized Kepler system in central configuration coordinate.
Therefore, the Lagrange solution, Euler solution and the regular $(1+n)$-gon solution can be regarded as a perturbation of the linearized Kepler system. In order to study the
stability of the perturbed system of linearized Kepler system, we first calculate the fundamental solution of linear Hamiltonian system at Keplerian
orbit in the following section.
\subsection{Fundamental solution of Keplerian orbit}
Let $z(t)$ be a periodic solution of Hamiltonian system (\ref{eq H}), that is
\bea\label{eqs4.1}
    z'(t)=J\nabla H(z), \ \ z(0)=z(T),
\eea
the linearized equation of (\ref{eqs4.1}) at $z$ is
\bea\label{eql4.1}
y'(t)=JD^{2}H(z)y(t).
\eea
The following Lemma is useful in our calculation of the fundamental solution of Keplerian orbit.
\begin{lem}\label{lemfm}
    If $I(z)$ is a first integral of (\ref{eqs4.1}), then $J\nabla I(z)$ is a solution of (\ref{eql4.1}).
\end{lem}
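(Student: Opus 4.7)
The plan is to use the defining property of a first integral in its pointwise (non-integrated) form. Since $I$ is a first integral of \eqref{eqs4.1}, we have along every solution $z(t)$ that
\[
0 = \frac{d}{dt}I(z(t)) = \nabla I(z(t))^{\mathcal{T}} z'(t) = \nabla I(z(t))^{\mathcal{T}} J \nabla H(z(t)).
\]
Because we may pick the initial condition $z(0)$ freely on the open domain where the flow is defined, this translates into the pointwise identity
\[
\nabla I(x)^{\mathcal{T}} J \nabla H(x) = 0 \quad \text{for all admissible } x,
\]
i.e.\ the Poisson bracket $\{I,H\}$ vanishes identically. This is the step that turns a dynamical statement into a differential-geometric one, and it is really the only conceptual point in the proof.

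Next I would differentiate this identity with respect to $x$. Using the symmetry of the Hessians and the antisymmetry $J^{\mathcal{T}} = -J$, taking the gradient of the scalar function $x \mapsto \nabla I(x)^{\mathcal{T}} J \nabla H(x)$ yields
\[
D^{2}I(x)\, J \nabla H(x) \;-\; D^{2}H(x)\, J \nabla I(x) \;=\; 0.
\]
This identity, evaluated along the orbit $z(t)$, is the key algebraic relation that will produce the linearized equation.

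Finally I would set $y(t) := J \nabla I(z(t))$ and compute its time derivative directly, substituting $z'(t) = J\nabla H(z(t))$ from \eqref{eqs4.1}:
\[
y'(t) \;=\; J\, D^{2}I(z(t))\, z'(t) \;=\; J\, D^{2}I(z(t))\, J \nabla H(z(t)).
\]
Then I would apply the identity from the previous paragraph (with $x = z(t)$) to replace $D^{2}I(z(t))\, J\nabla H(z(t))$ by $D^{2}H(z(t))\, J \nabla I(z(t))$, obtaining
\[
y'(t) \;=\; J\, D^{2}H(z(t))\, J \nabla I(z(t)) \;=\; J\, D^{2}H(z(t))\, y(t),
\]
which is exactly \eqref{eql4.1}.

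There is no real obstacle here; the proof is a three-line computation. The only thing one has to be careful about is keeping track of signs and transposes when differentiating the scalar identity $\nabla I^{\mathcal{T}} J \nabla H \equiv 0$, and making sure one uses the symmetry $D^{2}I = (D^{2}I)^{\mathcal{T}}$ and $J^{\mathcal{T}} = -J$ correctly so that the two Hessian-times-$J$-times-gradient terms combine with the right sign.
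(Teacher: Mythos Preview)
Your proof is correct. It differs from the paper's in method: the paper argues via commuting Hamiltonian flows, observing that since $I$ is a first integral the flows $h^t$ of $H$ and $f^s$ of $I$ commute, and then differentiates the identity $f^s(h^t z)=h^t(f^s z)$ first in $t$ and then in $s$, evaluating at $s=0$ to recover the linearized equation for $J\nabla I$ along the orbit. Your route is more elementary and self-contained: you differentiate the pointwise Poisson-bracket identity $\nabla I^{\mathcal T}J\nabla H\equiv 0$ once to obtain $D^{2}I\,J\nabla H=D^{2}H\,J\nabla I$, and then a direct time-differentiation of $y(t)=J\nabla I(z(t))$ closes the argument. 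The flow-commutation viewpoint is conceptually appealing and explains why the result should hold (symmetries of the flow map generate solutions of the variational equation), but it tacitly relies on the standard theorem that Poisson-commuting Hamiltonians generate commuting flows; your computation bypasses that and gets to the conclusion with nothing beyond the chain rule and the symmetry of Hessians.
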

\begin{proof}
    Suppose that $h^t$ and $f^s: \ E\mapsto\ E$ satisfying
    \bea
\frac{d}{d t}h^t z|_{t=0}=J\nabla H(z),\ \
\frac{d}{d s}f^sz|_{s=0}=J\nabla I(z),\quad \text{for}\quad \forall z\in E,\nonumber
    \eea
are Hamiltonian flows with respect to $J\nabla H$ and $J\nabla I$, respectively.
Since $I$ is a first integral of (\ref{eqs4.1}), then the two flows are commutable, that is, $f^s(h^t z)=h^t(f^s z)$ for $\forall z\in E$. Then we have
\bea\label{eqfh}
\frac{d}{d t}\left(f^s\left(h^t z\right)\right)=J\nabla H\left(f^s\left(h^t z\right)\right),
\eea
differentiate (\ref{eqfh}) with respect to $s$, we have
\bea\label{fhst}
\frac{d}{d s}\left(\frac{d}{d t}\left(f^s\left(h^t z\right)\right)\right)=J D^{2}H\left(f^s\left(h^t z\right)\right)\frac{d}{d s}\left(f^s\left(h^t z\right)\right).
\eea
Since $t$ and $s$ are independent, (\ref{fhst}) equals to
\bea\label{dtds}
\frac{d}{d t}\left(\frac{d}{d s}\left(f^s\left(h^t z\right)\right)\right)=J D^{2}H\left(f^s\left(h^t z\right)\right)\frac{d}{d s}\left(f^s\left(h^t z\right)\right),
\eea
hence at $s=0$ and $t=0$, we have
\bea
\frac{d}{d t}\left(J\nabla I\left( z\right)\right)=J D^{2}H\left(z\right)\left(J\nabla I\left(z\right)\right),\quad \text{for}\quad \forall z\in E,\nonumber
\eea
which means that $J\nabla I(z)$ is a solution of (\ref{eql4.1}).
\end{proof}
For the $2$-body problem, it's well known that it can be completely solved and the solution is Keplerian orbit. Consider the Hamiltonian function
of the $2$-body problem,
\bea
H_K(z)=\frac{1}{2}(p_1^2+p_2^2)-\frac{\hat{\lambda}}{(q_1^2+q_2^2)^{1/2}}\nonumber
\eea
where $z(t)=(p_{1}(t), p_{2}(t), q_{1}(t), q_{2}(t))^\mathcal{T}\in \mathbb{R}^4$ and $\hat{\lambda}$ is a non-zero constant.

Under the planar polar coordinates, the Hamiltonian equation of the $2$-body problem,
\bea\label{Kep1}
    z'(t)=J\nabla H_{K}(z), \ \
    z(0)=z(T).
\eea
has the well known Keplerian solution $z_{K}(t)=(p_{1}(t), p_{2}(t), q_{1}(t), q_{2}(t))^\mathcal{T}$
$$
p_{1}(t)=q'_{1}(t),\ \ p_{2}(t)=q'_{2}(t),\ \ q_{1}(t)=r(t)\cos\theta(t),\ \ q_{2}(t)=r(t)\sin\theta(t),
$$
where
$$
r(t)=\frac{C^2/\hat{\lambda}}{1+e\cos\theta(t)},\ \ r^2(\theta)\theta'=C,
$$
the constant $C$ is the angular momentum. Without loss of generality, we assume $C>0$. The linear Hamiltonian system at $z_{K}(t)$ is
\bea\label{Kepsolu1}
y'(t)=JD^{2}H_{K}(z_{K})y(t),
\eea
where
$$
D^2 H_{K}(z_{K})=\left(
            \begin{array}{cc}
              I_{2} & O_{2} \\
              O_{2} & \frac{\hat{\lambda}}{r^3(\theta)}(I_{2}-R(\theta)KR^{\mathcal{T}}(\theta)) \\
            \end{array}
          \right),
\ \
K=\left(
    \begin{array}{cc}
      3 & 0 \\
      0 & 0 \\
    \end{array}
  \right).
$$
Change the variable $t$ to  true anomaly
$\theta$ and perform linear symplectic transformation on $y(t)$,
\bea\label{tran1}
\tilde{y}(\theta)=\frac{1}{\sqrt[4]{\hat{\lambda}\mathfrak{p}}}\left(
                    \begin{array}{cc}
                      R^{-1}(\theta)r(\theta) & R^{-1}(\theta)r'(\theta) \\
                      O_{2} & \sqrt{\hat{\lambda}\mathfrak{p}}R^{-1}(\theta)r^{-1}(\theta) \\
                    \end{array}
                  \right)y(t(\theta)).
\eea
We obtain
\bea\label{Kepsolu2}
\dot{\tilde{y}}(\theta)=JB_{Kep}(\theta)\tilde{y}(\theta),
\eea
where
\bea
B(\theta)=B_{kep}(\theta)=\left( \begin{array}{cccc} I_{2} & -J_{2} \\
J_{2} & I_{2}-\frac{\mathcal{R}_{kep}}{1+e\cos\theta}
\end{array}\right),\ \ \mathcal{R}_{kep}=\left( \begin{array}{cccc} 3 & 0 \\
0 & 0
\end{array}\right)\nonumber
\eea
This tells us that if we get the fundamental solution of equation $(\ref{Kepsolu1})$, then by the transformation
(\ref{tran1}), we can obtain the fundamental solution of $(\ref{Kepsolu2})$, we write it as the following theorem.
\begin{thm}\label{FunKep}
    $\gamma_{Kep}(\theta)$ in (\ref{eq4.5}) is the fundamental solution of linear Hamiltonian system
    $$
    \dot{\gamma}(\theta)=JB_{Kep}(\theta)\gamma(\theta), \quad \gamma(0)=I_4.
    $$
\end{thm}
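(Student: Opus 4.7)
The plan is to exploit the rich first-integral structure of the Kepler 2-body problem together with Lemma \ref{lemfm} to construct four independent solutions of the variational equation (\ref{Kepsolu1}), assemble them into a fundamental matrix, and then push the result through the symplectic change of variables (\ref{tran1}).

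First I would list the classical first integrals of $H_K$: the energy $H_K$ itself, the angular momentum $L=q_1p_2-q_2p_1$, and the two components $A_1,A_2$ of the Laplace--Runge--Lenz vector. By Lemma \ref{lemfm}, each produces a solution of (\ref{Kepsolu1}), namely $J\nabla H_K(z_K)$ (which coincides with $z_K'(t)$), $J\nabla L(z_K)$, and $J\nabla A_i(z_K)$ for $i=1,2$. Evaluating these gradients along the Keplerian orbit $z_K(t)$ gives explicit vector-valued functions of $\theta$ through $r(\theta),\theta'(\theta)$ and $e$.

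Next I would extract three linearly independent solutions from these four. Since $H_K,L,A_1,A_2$ satisfy the standard algebraic relation $|A|^2=1+2H_KL^2/\hat\lambda^2$, one linear combination of the gradient-generated solutions vanishes identically, so exactly three are independent. To obtain the fourth, I would use the degeneracy of Kepler (the orbital period depends on energy): differentiating the one-parameter family of Keplerian orbits with respect to the energy yields an additional solution $\partial z_K/\partial E$, which carries the secular (linearly-in-$t$) behavior responsible for the non-trivial Jordan block of the monodromy. The four solutions thus obtained are assembled as the columns of a $4\times 4$ matrix $Y(t)$, and the fundamental matrix of (\ref{Kepsolu1}) is $\gamma(t)=Y(t)Y(0)^{-1}$, which by construction satisfies $\gamma(0)=I_4$.

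Finally I would convert from time $t$ to the true anomaly $\theta$ via $r^2\theta'=C$ and apply the symplectic transformation (\ref{tran1}). Because that transformation reduces to the identity at $\theta=0$ (given the normalization $\mathfrak{p}=C^2/\hat\lambda$), the resulting matrix $\gamma_{Kep}(\theta)$ satisfies $\gamma_{Kep}(0)=I_4$, and by construction $\dot\gamma_{Kep}(\theta)=JB_{Kep}(\theta)\gamma_{Kep}(\theta)$. Matching the explicit expressions with formula (\ref{eq4.5}) completes the proof.

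The main obstacle will be the identification and normalization of the fourth, secular solution, together with keeping track of the rescaling factors introduced by (\ref{tran1}); a useful consistency check is that the final $\gamma_{Kep}(\theta)$ depends only on $e$ and $\theta$, with the orbital constants $C$ and $\hat\lambda$ cancelling out. The algebra is otherwise routine, but the bookkeeping of factors $r(\theta)$, $r'(\theta)$ and $\sqrt{\hat\lambda\mathfrak{p}}$ must be done carefully so that the resulting matrix is both symplectic and normalized to the identity at $\theta=0$.
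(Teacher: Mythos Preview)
Your proposal is correct and follows essentially the same route as the paper: use Lemma~\ref{lemfm} on the classical Kepler integrals to produce three periodic solutions of (\ref{Kepsolu1}), manufacture the fourth secular solution by differentiating a one-parameter family of Kepler orbits, and then push everything through the change of variables (\ref{tran1}) and normalize. The paper uses $H$, the angular momentum, and only $A_2$ (skipping $A_1$ rather than invoking the relation $|A|^2=1+2HL^2/\hat\lambda^2$), obtains the secular column via the explicit scaling family $z_{K,h}(t)=(h^{1/3}p(ht),h^{-2/3}q(ht))$ differentiated at $h=1$ (equivalent to your energy differentiation since $H(z_{K,h})=h^{2/3}H(z_{K,1})$), and sets $C=\hat\lambda=1$ upfront rather than carrying the constants through and checking cancellation at the end.

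One small correction: the transformation (\ref{tran1}) does \emph{not} reduce to the identity at $\theta=0$; from the paper's explicit $\mathbb{A}^{-}(\theta)$ one sees $\mathbb{A}^{-}(0)=\mathrm{diag}\big((1+e)^{-1},(1+e)^{-1},1+e,1+e\big)$. This does not affect your argument, since after normalizing $\gamma(t)=Y(t)Y(0)^{-1}$ the transformed fundamental matrix is $\mathbb{A}^{-}(\theta)\gamma(t(\theta))\mathbb{A}^{-}(0)^{-1}$, which is automatically the identity at $\theta=0$ by conjugation---but the stated reason should be adjusted. The paper equivalently transforms first and then right-multiplies by $\tilde\gamma_{Kep}^{-1}(0)$.
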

\begin{proof}
In the following, we first give the solution of $(\ref{Kepsolu1})$ by Lemma \ref{lemfm}
and then get the fundamental solution of (\ref{Kepsolu2}).
Since $B_{Kep}(e, \theta)$ doesn't depend on $C, \hat{\lambda}$, the fundamental solution of (\ref{Kepsolu2}) is also independent of these parameters. For this reason, without loss of generality, we take $C=\hat{\lambda}=1$ in the following computations, this makes the computation more clear.

It is well known that Keplerian solution has three first integrals, energy $H$,  angular momentum $C$ and Runge-Lenz vector $(A_1,A_2,0)$, with
\bea\label{H}
H=\frac{1}{2}(p_1^2+p_2^2)-\frac{1}{(q_1^2+q_2^2)^{1/2}},
\ \
C=p_2q_1-p_1q_2,\nonumber
\eea
\bea\label{A1}
A_1=\frac{q_1}{(q_1^2+q_2^2)^{1/2}}-p_2(p_2q_1-p_1q_2),
\ \
A_2=\frac{q_2}{(q_1^2+q_2^2)^{1/2}}+p_1(p_2q_1-p_1q_2).\nonumber
\eea
Based on Lemma \ref{lemfm}, we obtain
\bea
&&\xi_H(\theta(t))=J\nabla H(z_K)=(-\cos\theta(1+e\cos\theta)^2,\ -\sin\theta(1+\cos\theta)^2,\ -\sin\theta,\ e+\cos\theta)^\mathcal{T},\nonumber \\
&&\xi_C(\theta(t))=J\nabla C(z_K)=(-e-\cos\theta,-\sin\theta,\frac{-\sin\theta}{1+e\cos\theta},\frac{\cos\theta}{1+e\cos\theta})^\mathcal{T},\nonumber \\
&&\xi_{A_2}(\theta(t))=J\nabla A_2(z_K)
=(\sin2\theta+e\sin\theta(1+e\cos^2\theta),-\cos2\theta-e\cos^3\theta,\frac{1+e\cos\theta+\sin^2\theta}{1+e\cos\theta},\frac{\sin\theta\cos\theta}{1+e\cos\theta})^\mathcal{T}, \nonumber
\eea
are periodic solutions of (\ref{Kepsolu1}).




Notice that if $z_{K}(t)$ is the Keplerian solution of (\ref{Kep1}), then by scaling symmetry corresponding to invariant $HC^2$,
\bea
z_{K,h}(t)=(h^{\frac{1}{3}}p(ht),h^{-\frac{2}{3}}q(ht))^\mathcal{T} \nonumber
\eea
is also a solution of (\ref{Kep1}), with period $T_h=\frac{T}{h}$, $H(z_{K,h})=h^{\frac{2}{3}}H(z_{K,1})$,
from (\ref{dtds}), differentiating $z_{K,h}(t)$ with respect to $h$, we have that $\frac{d}{dh}z_{K,h}|_{h=1}(t)$ is a solution of (\ref{eql4.1}), but usually, it is not periodic.
Direct computation shows that
\bea
\xi_h(\theta(t))=\frac{d}{dh}z_{K,h}|_{h=1}(t)=\left(-\frac{\sin\theta}{3},\ \frac{e+\cos\theta}{3},\ -\frac{2\cos\theta}{3(1+e\cos\theta)},\ -\frac{2\sin\theta}{3(1+e\cos\theta)}\right)^\mathcal{T} + t\ \xi_H(\theta(t))
 \nonumber
\eea
is a non-periodic solution of (\ref{Kepsolu1}).

From above analysis, we can easily check that
\bea
\xi(\theta(t))=\left(\xi_H(\theta),\ \xi_C(\theta),\ \xi_{A_2}(\theta),\ \xi_h(\theta)\right) \nonumber
\eea
is non-degenerate, and then it is a fundamental solution of (\ref{Kepsolu1}).
Based on the transformation (\ref{tran1}), we obtain the fundamental solution of (\ref{Kepsolu2}) in the following
\bea
\begin{aligned}
\tilde{\gamma}_{Kep}(\theta)&=\mathbb{A}^-(\theta)\xi(\theta) \\
&=\left(\begin{array}{cccc}
    -1-e\cos\theta-e^2\sin^2\theta & -1 & \sin\theta-e\sin\phi\cos\theta &\frac{e\sin\theta}{1+e\cos\theta}-\rho_0(e,\theta)(1+e\cos\theta+e^2\sin^2\theta)  \\
    -e\sin\theta-e^2\sin\theta\cos\theta & 0 & e\sin^2\theta-\cos\theta & \frac{1}{3}-\rho_0(e,\theta)(e\sin\theta+e^2\sin\theta\cos\theta) \\
    e\sin\theta+e^2\cos\theta\sin\theta & 0 & \cos\theta+e\cos^2\theta & -\frac{2}{3}+\rho_0(e,\theta)(e\sin\theta+e^2\cos\theta\sin\theta) \\
    (1+e\cos\theta)^2 & 1 & -2\sin\theta-e\sin\theta\cos\theta & \rho_0(e,\theta)(1+e\cos\theta)^2
\end{array}\right),
\end{aligned} \nonumber
\eea
where $\rho_0(e,\theta)=\int_0^{\theta}\frac{d\tau}{(1+e\cos\tau)^2}$ and
\bea
    \mathbb{A}^-(\theta)=\left(
                    \begin{array}{cc}
                      R^{-1}(\theta)r(\theta) & R^{-1}(\theta)r'(\theta) \\
                      O_{2} & R^{-1}(\theta)r^{-1}(\theta) \\
                    \end{array}
                  \right)=
   \left( \begin{array}{cccc}
         \frac{\cos\theta}{1+e\cos\theta} & \frac{\sin\theta}{1+e\cos\theta} &-e\sin\theta\cos\theta & -e\sin^2\theta \\
         -\frac{\sin\theta}{1+e\cos\theta} & \frac{\cos\theta}{1+e\cos\theta} &-e\sin^2\theta & -e\sin\theta\cos\theta \\
         0 & 0 & (1+e\cos\theta)\cos\theta & (1+e\cos\theta)\sin\theta \\
         0 & 0 & -(1+e\cos\theta)\sin\theta & (1+e\cos\theta)\cos\theta
    \end{array}\right) \nonumber
\eea
Let
\bea\label{eq4.5}
\begin{aligned}
    \gamma_{Kep}(\theta)&=\tilde{\gamma}_{Kep}(\theta)\tilde{\gamma}_{Kep}^{-1}(0) \\
    &=\left(
\begin{array}{cccc}
    \frac{2+e-\cos\theta-e\sin^2\theta}{1+e} & \frac{2(-1+e\cos\theta)\sin\theta-\frac{3e(1+e)\sin\theta}{1+e\cos\theta}}{1-e} & -\frac{(1-e\cos\theta+\frac{3e}{1+e\cos\theta})\sin\theta}{1-e} & \frac{1-\cos\theta-e\sin^2\theta}{1+e} \\
    -\frac{(1+e\cos\theta)\sin\theta}{1+e} & -\frac{1+e-2\cos\theta+2e\sin^2\theta}{1-e} &  -\frac{1-\cos\theta+e\sin^2\theta}{1-e} & -\frac{(1+e\cos\theta)\sin\theta}{1+e} \\
    \frac{(1+e\cos\theta)\sin\theta}{1+e} & -\frac{2(-1+\cos\theta)(1+e+e\cos\theta)}{1-e} & -\frac{-2+\cos\theta+e\cos^2\theta}{1-e} & \frac{(1+e\cos\theta)\sin\theta}{1+e} \\
    -\frac{2(2+e+e\cos\theta)\sin^2(\frac{\theta}{2})}{1+e} & \frac{2(2+e\cos\theta)\sin\theta}{1-e} & \frac{(2+e\cos\theta)\sin\theta}{1-e} & \frac{-1+2\cos\theta+e\cos^2\theta}{1+e}
\end{array}
    \right)\\
    &\ \ +\rho_0(e,\theta)\left(\begin{array}{cccc}
        0 & \frac{3(1+e)(1+e\cos\theta+e^2\sin^2\theta)}{1-e} & \frac{3(1+e\cos\theta+e^2\sin^2\theta)}{1-e} & 0 \\
        0 & \frac{3e(1+e)(1+e\cos\theta)\sin\theta}{1-e} & \frac{3e(1+e\cos\theta)\sin\theta}{1-e} & 0 \\
         0 & -\frac{3e(1+e)(1+e\cos\theta)\sin\theta}{1-e} & -\frac{3e(1+e\cos\theta)\sin\theta}{1-e} & 0 \\
         0 & -\frac{3(1+e)(1+e\cos\theta)^2}{1-e} & -\frac{3(1+e\cos\theta)^2}{1-e} & 0
    \end{array}\right),
\end{aligned}
\eea
then $\gamma_{Kep}(\theta)$ is the fundamental solution of (\ref{Kepsolu2}) with $\gamma_{Kep}(0)=I_{4}$.
\end{proof}
\subsection{Quantitative analysis of the linear stability}
Based on the above observation, it's important to study the stability of following linear Hamiltonian system
\bea\label{1.9e2}
\dot{z}(\theta)=J(B_{Kep}(\theta)+\sigma D(\theta))z(\theta), \quad z(0)=-z(2\pi),
\eea
where $D(\theta)\in\mathcal{S}(2\mathfrak{n}), \sigma\in\mathbb{C}$. 
As in Section $2$, we consider operator $A|_{E_{\mathfrak{n}}(-I)}=-J_{2\mathfrak{n}}\frac{d}{dt}$ with domain
$$
E_{\mathfrak{n}}(-I)=\left\{z\in W^{1,2}([0,T];\mathbb{C}^{2\mathfrak{n}})\ |\ z(0)=-z(T) \right\}.
$$
where $B, D$ are bounded linear operators defined by $(Bz)(t) = B(t)z(t), (Dz)(t) = D(t)z(t)$ on $E_{\mathfrak{n}}(S)$. Then $A|_{E_{\mathfrak{n}}(-I)}$ is a
self-adjoint operator with compact resolvent, moreover for $\sigma\in \rho(A)$, the resolvent
set of $A|_{E_{\mathfrak{n}}(-I)}$, $(A|_{E_{\mathfrak{n}}(-I)}-\sigma I_{2\mathfrak{n}})^{-1}$ is Hilbert-Schmidt.

Now, we can prove Theorem \ref{thm1.1}, which give a quantitative analysis to the stable region of ERE in planar $N$-body problem.
Before proceeding with the proof, we need following Lemma in \cite{HS10},
\begin{lem}\label{lem4.2.1}
    Consider the fundamental solution $\gamma_{\sigma,e}(\theta)$ of (\ref{1.9e2}), then
$$
i_1(\gamma_{0,e})=0, \ \ \nu_{1}(\gamma_{0,e})=3,\ \ i_{\omega}(\gamma_{0,e})=2,\ \ \nu_{\omega}(\gamma_{0,e})=0, \ \ \text{for}\ \ \omega\neq1.
$$
\end{lem}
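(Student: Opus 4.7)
\medskip

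\noindent\textbf{Proof proposal for Lemma 4.2.1.}
The plan is to read off all four quantities directly from the explicit fundamental solution $\gamma_{Kep}(\theta)$ that was written down in Theorem 3.3 (equation \eqref{eq4.5}), first determining the nullities at every $\omega\in\mathbb{U}$ from the spectrum of the monodromy, then identifying the symplectic normal form of $\gamma_{Kep}(2\pi)$ and finally feeding that normal form into the standard Maslov-type index formulas from \cite{L02}.

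First I would evaluate $\gamma_{0,e}(2\pi)=\gamma_{Kep}(2\pi)$ by plugging in $\theta=2\pi$ and using the classical identity
\[
\rho_{0}(e,2\pi)=\int_{0}^{2\pi}\frac{d\tau}{(1+e\cos\tau)^{2}}=\frac{2\pi}{(1-e^{2})^{3/2}}.
\]
The three periodic vector fields $\xi_{H}(\theta),\xi_{C}(\theta),\xi_{A_{2}}(\theta)$ exhibited in the proof of Theorem 3.3 are, by construction, $2\pi$-periodic solutions of \eqref{Kepsolu2}, so their initial vectors span a $3$-dimensional subspace of $\ker(\gamma_{Kep}(2\pi)-I_{4})$; a direct rank check on the explicit matrix in \eqref{eq4.5} confirms the kernel has dimension exactly $3$. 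Hence $\nu_{1}(\gamma_{0,e})=3$. Because $\gamma_{Kep}(2\pi)\in \Sp(4)$ its four eigenvalues come in the reciprocal pairs $\{\lambda,\lambda^{-1}\}$; with algebraic multiplicity of $1$ already at least $3$, the fourth eigenvalue must also equal $1$, and therefore $\det(\gamma_{Kep}(2\pi)-\omega I_{4})\neq 0$ for every $\omega\in\mathbb{U}\setminus\{1\}$. This gives $\nu_{\omega}(\gamma_{0,e})=0$ for all $\omega\neq 1$ at once.

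Next I would pin down the symplectic normal form. With all eigenvalues at $1$ and geometric multiplicity $3$, the only possibility is $\gamma_{Kep}(2\pi)\approx I_{2}\diamond N_{1}(1,a)$ for some $a\in\{\pm 1\}$. The column corresponding to $\xi_{h}$, whose secular part $t\,\xi_{H}(\theta)$ is responsible for the non-trivial Jordan block, supplies the sign of $a$: a short computation of $(\gamma_{Kep}(2\pi)-I_{4})\xi_{h}(0)$ versus $\xi_{H}(0)$, compared against Long's splitting numbers convention, fixes $a=1$. Once the normal form is known, the $\omega$-indices of $\gamma_{0,e}$ follow from the catalog of index formulas for basic normal forms in Chapters 6 and 8 of \cite{L02}: the block $I_{2}$ contributes $(i_{1},i_{\omega})=(0,1)$ and the block $N_{1}(1,1)$ contributes $(i_{1},i_{\omega})=(0,1)$ for every $\omega\in\mathbb{U}\setminus\{1\}$, adding to $i_{1}(\gamma_{0,e})=0$ and $i_{\omega}(\gamma_{0,e})=2$.

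The main obstacle is the sign determination of the Jordan block, because the index value at the degenerate endpoint depends sensitively on whether the block is $N_{1}(1,+1)$ or $N_{1}(1,-1)$. To make the identification rigorous I would cross-check by a homotopy argument: at $e=0$ the system \eqref{Kepsolu2} has constant coefficients and $\gamma_{Kep,0}(\theta)=\exp(\theta JB_{Kep}(0,\cdot))$ can be diagonalized by hand, giving the indices directly. Along the homotopy $e\in[0,1)$ the quantities $\nu_{1}$ and $\nu_{\omega}$ are already constant by the previous paragraph, so no crossing of $\Sp(4)_{\omega}^{0}$ occurs away from $\omega=1$ and the $\omega$-index is invariant; the value at $\omega=1$ is then recovered from Long's splitting-number relation $i_{1}(\gamma)+\nu_{1}(\gamma)=\lim_{\omega\to 1}i_{\omega}(\gamma)+S^{+}(\gamma(2\pi))$, which gives $i_{1}(\gamma_{0,e})=0$ once the splitting numbers of $I_{2}\diamond N_{1}(1,1)$ are inserted. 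Together these four computations yield exactly the claimed values $i_{1}=0,\nu_{1}=3,i_{\omega}=2,\nu_{\omega}=0$.
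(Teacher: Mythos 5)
Your proposal reaches the correct conclusions but takes a genuinely different route from the paper: the paper gives no proof of Lemma \ref{lem4.2.1} at all, instead citing \cite{HS10}, where $i_1(\gamma_{0,e})=0$ is obtained variationally from Gordon's lemma \cite{G77} (elliptic Kepler orbits minimize the action, so the Morse index $\phi_1$ vanishes and Lemma \ref{4.1} transfers this to $i_1$), with $\nu_1=3$ coming from the first integrals and $i_\omega$, $\omega\neq1$, from splitting numbers. You instead exploit the newly available explicit fundamental solution (\ref{eq4.5}). The nullity part of your argument is clean and complete: $\xi_H,\xi_C,\xi_{A_2}$ periodic and $\xi_h$ secular give $\nu_1=3$ exactly, and the reciprocal pairing of eigenvalues forces $\sigma(\gamma_{Kep}(2\pi))=\{1\}$, hence $\nu_\omega=0$ for $\omega\neq1$. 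The identification $\gamma_{Kep}(2\pi)\approx I_2\diamond N_{1}(1,1)$ is also correct, and the sign can be fixed intrinsically: $\gamma_{Kep}(T)\xi_h(0)=\xi_h(0)+T\,\xi_H(0)$ with $\omega(\xi_H(0),\xi_h(0))=-dH(\xi_h)=-\tfrac{2}{3}H>0$ on an elliptic orbit, so the off-diagonal entry has positive sign after symplectic normalization.

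The one step you must reorganize is the claim that ``once the normal form is known, the $\omega$-indices follow from the catalog of index formulas.'' The Maslov-type index is an invariant of the \emph{path}, not of its endpoint: two paths ending at $I_2\diamond N_1(1,1)$ can have indices differing by any even integer (append a loop generating $\pi_1(\Sp(4))$), so the endpoint normal form alone can never yield $i_1=0$; what it yields, combined with a known $i_1$, is $i_\omega=i_1+S^+_{M}(1)=i_1+2$ for $\omega\neq1$. Your ``cross-check'' is therefore the actual proof and has to be carried out. It does work: at $e=0$ the Sturm--Liouville operator $-\frac{d^2}{d\theta^2}I_2-2J_2\frac{d}{d\theta}+\mathrm{diag}(3,0)$ diagonalizes over Fourier modes $e^{\sqrt{-1}s\theta}v$ with symbol $\left(\begin{smallmatrix} s^2+3 & 2\sqrt{-1}s \\ -2\sqrt{-1}s & s^2\end{smallmatrix}\right)$, whose determinant $s^2(s^2-1)$ is negative exactly for $0<|s|<1$ while the trace stays positive; counting the admissible $s=k+\vartheta/2\pi$ gives $\phi_1=0$, $\nu_1=3$, $\phi_{\omega}=2$, $\nu_\omega=0$, and the homotopy in $e$ is legitimate because $\sigma(\gamma_{0,e}(2\pi))\cap\mathbb{U}$ and the nullities are constant, so the endpoints stay in one component $\Omega_0$. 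With that computation written out, your argument is complete and, unlike the paper's, self-contained.
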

\begin{rem}
In \cite{HS10}, they first proved the index equalities in Lemma \ref{lem4.2.1} based on Gordon's lemma \cite{G77}, which says that the periodic elliptic Kepler orbits
are local minimizers of the action functional. We must mention that at that time, there was no precise expression of $\gamma_{0,e}(\theta)$ yet. In the present paper,
the first time, we give the precise expression of $\gamma_{0,e}(\theta)=\gamma_{Kep}(\theta)$ which is given in (\ref{eq4.5}).
\end{rem}
\begin{proof}[Proof of Theorem \ref{thm1.1}]
Under the assumption of Theorem \ref{thm1.1}, the eigenvalues of $D(A-B)^{-1}$ are real on $E_{\mathfrak{n}}(-I)$. Since $E_{\mathfrak{n}}(-I)=E_{\mathfrak{n}}^+(-I)\oplus E_{\mathfrak{n}}^-(-I) $
and $E_{\mathfrak{n}}^{\pm}(-I)$ are isomorphic to $\tilde{E}_{\mathfrak{n}}^\pm(-I)$, the eigenvalues of $D(A-B)^{-1}$ are also  real on $\tilde{E}_{\mathfrak{n}}(-I)$, it satisfies the condition in
Theorem \ref{thm3.3}. Applying Theorem \ref{thm3.3} for $E=\tilde{E}_{\mathfrak{n}}^{\pm}(-I)$, we only need to compute the precise expression such that
\bea\label{tra1}
Tr(\mathcal{F}^2(B_{Kep},\sigma D;\tilde{E}_{\mathfrak{n}}^\pm(-I)))<1.
\eea
Since
    \bea
Tr(\mathcal{F}^2(B_{Kep}, \sigma D; \tilde{E}_{\mathfrak{n}}^{\pm}(-I)))=\sigma^2 Tr(\mathcal{F}^2(B_{Kep}, D; \tilde{E}_{\mathfrak{n}}^{\pm}(-I))), \nonumber
    \eea
We denote
\bea\label{eq4.19}
f_\pm(e)=Tr(\mathcal{F}^2(B_{Kep},D;\tilde{E}_{\mathfrak{n}}^\pm(-I))),\ \ f(e)=\max\{f_+(e),\ f_-(e)\}.
\eea
Condition (\ref{tra1}) implies by
$$
|\sigma|<\frac{1}{\sqrt{f(e)}}.
$$
In the following, using the trace formula in Theorem \ref{th3.1}, we give a more precisely expression of $f_+(e),\ f_-(e)$.
From (\ref{2orTrace}) in Theorem \ref{th3.1}, we have
    \bea\label{comp1}
f_\pm(e)=Tr(\mathcal{F}^2(B_{Kep}, D; \tilde{E}_{\mathfrak{n}}^{\pm}(-I)))=Tr(G_{1,\pm}^2)-2 Tr(G_{2,\pm}).
    \eea
First we consider $E=\tilde{E}_{\mathfrak{n}}^{+}(-I)$, from (\ref{eq3.3})-(\ref{eq3.5}), we have
    \bea
\begin{aligned}
    Z_{0,+}=\left(
    \begin{array}{cccc}
        0 & 1 & 0 & 0 \\
        0 & 0 & 1 & 0
    \end{array}
    \right)^\mathcal{T},
\end{aligned}
\quad
\begin{aligned}
    Z_{1,+}=\left(
    \begin{array}{cccc}
        1 & 0 & 0 & 0 \\
        0 & 0 & 0 & 1
    \end{array}
    \right)^\mathcal{T},
\end{aligned}\nonumber
    \eea
therefore
\bea
P_+=\left(
\begin{array}{cccc}
    0 & 0 & \frac{3-e}{1-e} & \frac{2}{1-e} \\
    1 & 0 & 0 & 0 \\
    0 & 1 & 0 & 0 \\
    0 & 0 & -\frac{4}{1-e} & -\frac{3+e}{1-e}
\end{array}
\right), \quad
Q_{d,+}=\left(\begin{array}{cccc}
    0 & 0 & 0 & 0 \\
    1 & 0 & 0 & 0 \\
    0 & 1 & 0 & 0 \\
    0 & 0 & 0 & 0
\end{array}\right)
,\quad
Q_{d,+}P_+^{-1}=\left(\begin{array}{cccc}
    0 & 0 & 0 & 0 \\
    0 & 1 & 0 & 0 \\
    0 & 0 & 1 & 0 \\
    0 & 0 & 0 & 0
\end{array}\right).\nonumber
\eea
For simplicity of calculation, let
\bea
\mathcal{P}_+=\left(\begin{array}{cccc}
    0 & \frac{1}{1+e} & \frac{3(1+e)\pi}{2(1-e)(1-e^2)^{\frac{3}{2}}} & 0 \\
    1 & 0 & 0 & 0 \\
    -(1+e) & 0 & 0 & 1 \\
    0 & 0 & -\frac{3(1+e)^2\pi}{2(1-e)(1-e^2)^{\frac{3}{2}}} & 0
\end{array}\right),\ \
\text{and}\ \
\Gamma_+=\mathcal{P}_+^{-1}Q_{d,+}P_+^{-1}\mathcal{P}_+=\left(
\begin{array}{cccc}
    1 & 0 & 0 & 0 \\
    0 & 0 & 0 & 0 \\
    0 & 0 & 0 & 0 \\
    0 & 0 & 0 & 1
\end{array}
\right),\nonumber
\eea
we have
\bea\label{comp2}
\begin{aligned}
    Tr(G_{1,+}^2)&=Tr((P_+^{-1}M_1Q_{d,+})^2)
    =Tr((\tilde{M}_1^+\Gamma_+)^2),
\end{aligned}
\begin{aligned}
    Tr(G_{2,+})&=Tr(P_+^{-1}M_2Q_{d,+})
    =Tr(\tilde{M}_2^+\Gamma_+).
\end{aligned}
\eea
then from (\ref{comp1}) and (\ref{comp2}), we derive the computational formula for $f_{+}(e)$,
\bea
\begin{aligned}\label{eq4.14}
f_+(e)&=Tr((\tilde{M}_1^+\Gamma_+)^2)-2Tr(\tilde{M}_2^+\Gamma_+)
=-2\sum_{j=2}^3\int_0^{\pi}\int_0^{\theta}(\tilde{D}_{1j}^+(\theta)\tilde{D}_{j1}^+(s)+\tilde{D}_{4j}^+(\theta)\tilde{D}_{j4}^+(s))dsd\theta
\end{aligned}
\eea
where $\tilde{M}_k^+=\mathcal{P}_+^{-1}M_k\mathcal{P}_+$ and $\tilde{D}_{ij}^+=(\mathcal{P}_+^{-1}J\hat{D}(\theta)\mathcal{P}_+)_{ij}$.

Similar, for $E=\tilde{E}_{\mathfrak{n}}^-(-I)$, we have
\bea
Z_{0,-}=\left(
\begin{array}{cccc}
    1 & 0 & 0 & 0 \\
    0 & 0 & 0 & 1
\end{array}
\right)^\mathcal{T},\quad
Z_{1,-}=\left(
\begin{array}{cccc}
    0 & 1 & 0 & 0 \\
    0 & 0 & 1 & 0
\end{array}
\right)^\mathcal{T}\nonumber
\eea
and let
\bea
\mathcal{P}_-=\left(\begin{array}{cccc}
    1 & 0 & -\frac{3\pi(1-e)}{(1-e^2)^{\frac{3}{2}}} & -\frac{3\pi}{(1-e^2)^{\frac{3}{2}}} \\
    0 & 0 & -\frac{3-e}{1+e} & -\frac{2}{1+e} \\
    0 & 0 & \frac{4}{1+e} & \frac{3+e}{1+e} \\
    0 & 1 & \frac{3\pi}{\sqrt{1-e^2}} & \frac{3\pi(1+e)}{(1-e^2)^{\frac{3}{2}}}
\end{array}\right),
 \quad
\text{and}\ \
\Gamma_-=\mathcal{P}_-^{-1}Q_{d,-}P_-^{-1}\mathcal{P}_-=\left(
\begin{array}{cccc}
    0 & 0 & 0 & 0 \\
    0 & 0 & 0 & 0 \\
    0 & 0 & 1 & 0 \\
    0 & 0 & 0 & 1 \\
\end{array}
\right),\nonumber
\eea
then we have
\bea
\begin{aligned}\label{eq4.18}
f_-(e)
=Tr((\tilde{M_1}\Gamma_-)^2)-2Tr(\tilde{M_2}\Gamma_-)
=-2\sum_{j=2}^3\int_0^{\pi}\int_0^{\theta}(\tilde{D}^-_{3j}(\theta)\tilde{D}^-_{j3}(s)+\tilde{D}^-_{4j}(\theta)\tilde{D}^-_{j4}(s))dsd\theta
\end{aligned}
\eea
where $\tilde{M}_k^-=\mathcal{P}_-^{-1}M_k\mathcal{P}_-$ and $\tilde{D}_{ij}^-=(\mathcal{P}_-^{-1}J\hat{D}(\theta)\mathcal{P}_-)_{ij}$.
Then from Theorem \ref{thm3.3} and Lemma \ref{lem4.2.1}, we have
$$\mathcal{I}(A|_{E_{\mathfrak{n}}(-I)}-B_{0,e}, A|_{E_{\mathfrak{n}}(-I)}-B_{0,e}-\sigma D)=0,\ \ i_{-1}(\gamma_{\sigma,e})=i_{-1}(\gamma_{0,e})=2,\ \ \text{for}\ \ |\sigma|<\frac{1}{\sqrt{f(e)}},$$
where $f(e)=\max\{f_+(e),\ f_-(e)\}$.
Moreover, if for such $(\sigma,e)$, $i_1(\gamma_{\sigma,e})=0$ holds, then Theorem \ref{thm3.3} implies $\gamma_{\sigma,e}(T)$ is linearly stable and
$\gamma_{\sigma,e}(T)\approx R(\vartheta_{1})\diamond R(\vartheta_{2})$ with $\vartheta_{1},\vartheta_{2}\in(\pi,2\pi)$.
This completes the proof of the Theorem \ref{thm1.1}.
\end{proof}
We state a special case of Theorem \ref{thm1.1} as a Corollary, which can be easily obtained by using Corollary \ref{thm3.4}.
\begin{cor}\label{cor4.4}
    If $D>0$ or $D<0$, then for $(\sigma,e)$ satisfies
    \bea
|\sigma|<\frac{1}{\sqrt{f(e)}},\nonumber
    \eea
we have $\mathcal{I}(A|_{E_{\mathfrak{n}}(-I)}-B_{Kep},A|_{E_{\mathfrak{n}}(-I)}-B_{Kep}-\sigma D)=0$ and $i_{-1}(\gamma_{\sigma,e})=2$.
If for such $(\sigma,e)$, $i_1(\gamma_{\sigma,e})=0$ holds, then $\gamma_{\sigma,e}(T)$ is linearly stable and
$\gamma_{\sigma,e}(T)\approx R(\vartheta_{1})\diamond R(\vartheta_{2})$ with $\vartheta_{1},\vartheta_{2}\in(\pi,2\pi)$.
\end{cor}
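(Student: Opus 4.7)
The plan is to deduce Corollary \ref{cor4.4} directly from Theorem \ref{thm1.1}, since the definiteness of $D$ is strictly stronger than the real-spectrum hypothesis on $D(A-B_{Kep})^{-1}$ required there. Equivalently, one may repeat the proof of Theorem \ref{thm1.1} verbatim but invoke Corollary \ref{thm3.4} in place of Theorem \ref{thm3.3} at the final stability-criterion step, since Corollary \ref{thm3.4} is precisely the variant of Theorem \ref{thm3.3} in which the real-spectrum assumption is replaced by $D > 0$ (or $D < 0$). Either route leads to the same conclusion without any new computation.

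To verify the real-spectrum hypothesis under the stronger condition $D > 0$ (the case $D < 0$ is analogous via $D \mapsto -D$), I would first invoke Lemma \ref{lem4.2.1} to get $\nu_{-1}(\gamma_{0,e}) = 0$, so that $A|_{E_{\mathfrak{n}}(-I)} - B_{Kep}$ is a non-degenerate self-adjoint Fredholm operator with self-adjoint bounded inverse. With $D^{1/2}$ the bounded positive square root of $D$, the similarity
\[
D(A - B_{Kep})^{-1} \;=\; D^{1/2}\bigl[D^{1/2}(A - B_{Kep})^{-1} D^{1/2}\bigr]D^{-1/2}
\]
exhibits $D(A-B_{Kep})^{-1}$ as conjugate to the self-adjoint operator in brackets, so its spectrum lies in $\mathbb{R}$. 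This is exactly the hypothesis of Theorem \ref{thm1.1}, which then applies on each subspace $\tilde{E}_{\mathfrak{n}}^{\pm}(-I)$ after the $\mathbb{Z}_2$-decomposition, with the very same function $f(e)$ defined in (\ref{eq4.19}).

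With the hypothesis in hand, Theorem \ref{thm1.1} yields immediately, for $|\sigma| < 1/\sqrt{f(e)}$, both $\mathcal{I}(A|_{E_{\mathfrak{n}}(-I)}-B_{Kep},\, A|_{E_{\mathfrak{n}}(-I)}-B_{Kep}-\sigma D)=0$ and $i_{-1}(\gamma_{\sigma,e}) = 2$, together with the stated linear stability conclusion $\gamma_{\sigma,e}(T) \approx R(\vartheta_1) \diamond R(\vartheta_2)$ with $\vartheta_1,\vartheta_2 \in (\pi, 2\pi)$ under the further assumption $i_1(\gamma_{\sigma,e}) = 0$. The only conceivable obstacle is the need for $D^{1/2}$ to be bounded with bounded inverse, which is standard when $D(\theta) \geq cI$ uniformly on $[0, T]$; the alternative route of simply substituting Corollary \ref{thm3.4} into the proof of Theorem \ref{thm1.1} bypasses this technicality altogether, since Corollary \ref{thm3.4} already packages the conclusion under the definiteness hypothesis and its proof requires no statement about real spectrum of $D(A-B)^{-1}$.
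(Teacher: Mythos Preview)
Your proposal is correct and matches the paper's approach: the paper states that Corollary~\ref{cor4.4} is a special case of Theorem~\ref{thm1.1} ``which can be easily obtained by using Corollary~\ref{thm3.4},'' and you spell out both routes. Your $D^{1/2}$-similarity argument is in fact a cleaner justification of the real-spectrum claim than the paper's own remark preceding Corollary~\ref{thm3.4} (which asserts $D(A-B)^{-1}$ is self-adjoint, whereas it is only similar to a self-adjoint operator).
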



\section{Applications}\label{sec4}
In order to study the linear stability of ERE via the index theory, we consider the Sturm-Liouville operator of the essential part (\ref{ga3e})-(\ref{hess}),
\bea \label{eqst}
\mathcal{A}=-\frac{d^2}{d\theta^2}I_k-2J_k\frac{d}{d\theta}+\frac{\mathcal{R}}{1+e\cos\theta}.\nonumber
\eea
$\mathcal{A}$ is a self-adjoint operator in $L^2([0,2\pi], \mathbb{C}^{k})$ with domain
$$ \mathbb{D}_{k}(\omega)=\{y\in W^{2,2}([0,T],\mathbb{C}^{k})\,|\,y(2\pi)=\omega y(0),\,\dot{y}(2\pi)=\omega \dot{y}(0)\}, \omega\in\mathbb{C}.$$
We define the $\omega$-Morse index $\phi_{\omega}(\mathcal{A})$ to be the total number of negative
eigenvalues of $\mathcal{A}$, and define
$\nu_{\omega}(\mathcal{A})=\dim\ker(\mathcal{A})$. Then we have the following theorem which relates the Morse index to the
Maslov-type index.

\begin{lem}\label{4.1} (See Long \cite{L02} p.172).
The $\omega$-Morse index $\phi_{\omega}(\mathcal{A})$ and nullity $\nu_{\omega}(\mathcal{A})$ are equal
to the $\omega$-Maslov-type index $i_{\omega}(\ga)$ and nullity $\nu_{\omega}(\ga)$ respectively, that is,
for any $\omega\in \mathbb{U}$, we have
\bea \phi_{\omega}(\mathcal{A})=i_{\omega}(\ga),\ \ \nu_{\omega}(\mathcal{A})=\nu_{\omega}(\ga). \label{indexequ}\nonumber\eea
where $\ga$ is given by (\ref{ga3e}).
\end{lem}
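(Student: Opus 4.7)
The plan is to realize $\mathcal{A}$ as the Hessian at a critical point of a quadratic action functional on a path space, and then invoke the standard Morse-theoretic identification between the Morse index of such a Hessian and the Maslov-type index of the associated Hamiltonian flow. The nullity equality is the elementary half; the Morse index equality requires a spectral flow argument. Since the result is stated as a direct citation to Long's book \cite{L02}, the goal of the sketch is to indicate how the two sides are matched.

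For the nullity, I would introduce the Legendre-type transformation $p = \dot{y} + J_k y$, $z = (p, y)^{\mathcal{T}}$, and verify by direct computation that under this change of variables the second-order equation $\mathcal{A} y = 0$ is equivalent to the first-order Hamiltonian system $\dot z = J_{2k} B(\theta) z$ with the matrix $B(\theta)$ given by (\ref{eq2.4e}). Indeed, from $\dot{z} = J_{2k} B(\theta) z$ one recovers $\dot y = p - J_k y$ and $\dot p = -J_k p + (\mathcal{R}/(1+e\cos\theta) - I_k) y$, whence $\ddot y + 2 J_k \dot y - \mathcal{R} y/(1+e\cos\theta) = 0$. The $\omega$-boundary condition $y(2\pi) = \omega y(0)$, $\dot y(2\pi) = \omega \dot y(0)$ becomes $z(2\pi) = \omega z(0)$, establishing a bijection between $\ker_\omega(\mathcal{A})$ and $\ker(\gamma(2\pi) - \omega I_{2k})$, and hence $\nu_\omega(\mathcal{A}) = \nu_\omega(\gamma)$.

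For the index, I would compare $\phi_\omega$ and $i_\omega$ through a common spectral flow. Consider the monotone homotopy $\mathcal{A}_s = -\tfrac{d^2}{d\theta^2} I_k - 2 J_k \tfrac{d}{d\theta} + s \mathcal{R}/(1+e\cos\theta)$ for $s \in [0,1]$, together with the corresponding family of Hamiltonian systems with fundamental solutions $\gamma_s(\theta)$. Standard spectral flow theory gives
\[
\phi_\omega(\mathcal{A}) - \phi_\omega(\mathcal{A}_0) \;=\; -\mathrm{Sf}\{\mathcal{A}_s\}_{s\in[0,1]},
\]
while the symplectic definition of the Maslov-type index yields
\[
i_\omega(\gamma) - i_\omega(\gamma_0) \;=\; -\mathrm{Sf}\{\gamma_s(2\pi)\}_{s\in[0,1]},
\]
where the right-hand side is the Maslov intersection count of the path $\gamma_s(2\pi)\cdot\Delta$ with the $\omega$-diagonal. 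A baseline check for the constant-coefficient operator $\mathcal{A}_0$ shows $\phi_\omega(\mathcal{A}_0) = i_\omega(\gamma_0)$, so it suffices to match the two spectral flows term by term at crossings $s_0 \in [0,1]$ with $\nu_\omega(\mathcal{A}_{s_0}) > 0$. At such a crossing, the variational crossing form $\langle \dot{\mathcal{A}}_s y, y \rangle|_{s=s_0}$ restricted to $\ker_\omega(\mathcal{A}_{s_0})$ must be compared with the symplectic crossing form of Robbin--Salamon on the corresponding Lagrangian kernel.

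The main obstacle is precisely this crossing-form identification: one has to verify that, under the Legendre transform $y \leftrightarrow z$, the variational and symplectic crossing forms coincide (up to the same sign convention). Legendre convexity, which for our system is the trivial statement $\mathcal{P}(\theta) \equiv I_k > 0$ in the Sturm--Liouville normal form (\ref{eqst3.8}), ensures that the transform is a linear isomorphism on the kernel which preserves signatures. Once this local matching is in place, summing the signed crossings over $s \in [0,1]$ yields equality of the spectral flows, and combined with the baseline identity gives $\phi_\omega(\mathcal{A}) = i_\omega(\gamma)$. Both arguments are carried out in full generality in Chapter~7 of \cite{L02}, and this is what the cited p.~172 statement encapsulates.
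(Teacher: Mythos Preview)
The paper does not prove this lemma; it is quoted verbatim as a known result from Long's monograph \cite{L02}, p.~172, and is used as a black box. So there is no ``paper's own proof'' to compare against.

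Your sketch is a correct outline of the standard argument behind that result. The nullity half is exactly right: the Legendre substitution $p=\dot y+J_ky$ converts $\mathcal{A}y=0$ with $\omega$-boundary conditions into $\dot z=J_{2k}B(\theta)z$ with $z(2\pi)=\omega z(0)$, giving the bijection of kernels. For the index half, the spectral-flow/crossing-form strategy you describe is precisely how the theorem is established in the literature (and in \cite{L02}): one deforms through a one-parameter family, matches the variational crossing form on $\ker_\omega(\mathcal{A}_{s_0})$ with the symplectic crossing form on $\ker(\gamma_{s_0}(2\pi)-\omega I)$ via the Legendre isomorphism, and anchors the identity at a computable endpoint. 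The Legendre convexity hypothesis $\mathcal{P}\equiv I_k>0$ is exactly what makes this matching go through. One small caveat: the baseline identity $\phi_\omega(\mathcal{A}_0)=i_\omega(\gamma_0)$ for your particular $s=0$ operator $-\tfrac{d^2}{d\theta^2}I_k-2J_k\tfrac{d}{d\theta}$ is not entirely trivial (this operator is degenerate at $\omega=1$), so in practice one either deforms from a genuinely simple reference such as $-\tfrac{d^2}{d\theta^2}I_k+cI_k$ with $c\gg0$, or appeals directly to the Galerkin/saddle-point construction in \cite{L02}, Chapter~7, which builds the equality without needing an explicit anchor computation.
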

In our research of the linear stability of ERE, a useful form of the Sturm-Liouville operator $\mathcal{A}$ is
   \bea
\mathcal{A}=\mathcal{A}_++\kappa\frac{\tilde{N}}{1+e\cos\theta},\nonumber
   \eea
where $\mathcal{A}_+$ is a positive Sturm-Liouville operator and $\tilde{N}=\left(\begin{array}{cc}
       I & 0 \\
       0 & -I
   \end{array}\right)$, dimension of $I$ depends on $\mathcal{A}$, $\kappa\in\mathbb{C}$.
\begin{defi}\label{genEig}
In general, for two operators $\hat{A}, \hat{B}$ with same domain $E$, we call $\kappa$ is a generalized eigenvalue of $\hat{A}$, if
$$(\hat{A}+\kappa \hat{B})x=0$$ has non-trivial solution in $E$. Especially, when $\hat{A}$ is invertible, $\kappa$ is the generalized eigenvalue of $\hat{A}$ if and only if $\frac{1}{\kappa}$ is the usual eigenvalue of operator $-\hat{B}\hat{A}^{-1}$.
\end{defi}
\begin{prop}\label{prop2.1}
  The generalized eigenvalues of $\mathcal{A_{+}}$, or equivalently, the eigenvalues of $-\frac{\tilde{N}}{1+e\cos\theta}\hat{A}^{-1}$ are real.
\end{prop}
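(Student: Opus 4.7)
The plan is to exploit the positivity and self-adjointness of $\mathcal{A}_+$ together with the symmetry of the multiplication operator $M:=\frac{\tilde N}{1+e\cos\theta}$ on $L^2$. The cleanest route is a direct Rayleigh-quotient argument; a similarity reduction to a self-adjoint operator gives the same conclusion and also identifies the operator whose spectrum is in question.

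First I would unwind the definition: by Definition \ref{genEig}, $\kappa$ is a generalized eigenvalue of $\mathcal{A}_+$ precisely when there is a non-zero $x$ in the domain of $\mathcal{A}_+$ with
\begin{equation}
\mathcal{A}_+ x + \kappa M x = 0, \qquad M = \frac{\tilde N}{1+e\cos\theta}. \nonumber
\end{equation}
Observe that $M$ is a bounded self-adjoint operator on $L^2([0,2\pi],\mathbb{C}^{k})$, since $\tilde N$ is real symmetric and $(1+e\cos\theta)^{-1}$ is real-valued for $e\in[0,1)$. Taking the Hermitian inner product of the identity above with $x$ yields
\begin{equation}
\langle \mathcal{A}_+ x, x\rangle + \kappa \langle M x, x\rangle = 0. \nonumber
\end{equation}
Both pairings are real because $\mathcal{A}_+$ and $M$ are self-adjoint, and by hypothesis $\mathcal{A}_+$ is positive, so $\langle \mathcal{A}_+ x, x\rangle>0$ for $x\neq 0$. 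In particular $\langle M x, x\rangle\neq 0$, and we conclude
\begin{equation}
\kappa = -\frac{\langle \mathcal{A}_+ x, x\rangle}{\langle M x, x\rangle}\in \mathbb{R}. \nonumber
\end{equation}

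To match the equivalent formulation in the statement, I would then record that since $\mathcal{A}_+>0$ is invertible, the substitution $x=\mathcal{A}_+^{-1/2}z$ transforms $(\mathcal{A}_++\kappa M)x=0$ into $\bigl(I+\kappa\,\mathcal{A}_+^{-1/2}M\mathcal{A}_+^{-1/2}\bigr)z=0$, i.e. $1/\kappa$ is an eigenvalue of the self-adjoint operator $-\mathcal{A}_+^{-1/2}M\mathcal{A}_+^{-1/2}$. This operator is similar (via $\mathcal{A}_+^{1/2}$) to $-M\mathcal{A}_+^{-1}$, so the two operators have the same spectrum, establishing the ``equivalently'' clause and reconfirming reality.

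The only subtle points are genuinely routine: checking that $M$ maps the domain of $\mathcal{A}_+$ into $L^2$ (immediate since $M$ is bounded), justifying the functional calculus for $\mathcal{A}_+^{\pm 1/2}$ (immediate from $\mathcal{A}_+>0$ being self-adjoint with compact resolvent, as $\mathcal{A}_+$ is of Sturm–Liouville type on a compact interval), and ensuring $x\in\mathrm{dom}(\mathcal{A}_+)$ so that the pairing $\langle\mathcal{A}_+x,x\rangle$ is defined. There is no real obstacle; the argument is a two-line Rayleigh-quotient calculation once self-adjointness of both $\mathcal{A}_+$ and $M$ is recorded.
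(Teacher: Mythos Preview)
Your proposal is correct and matches the paper's argument: the paper proves this by exactly the similarity reduction you describe, conjugating by $\mathcal{A}_+^{-1/2}$ to obtain $I+\kappa\,\mathcal{A}_+^{-1/2}\frac{\tilde N}{1+e\cos\theta}\mathcal{A}_+^{-1/2}$ and invoking self-adjointness. Your preliminary Rayleigh-quotient computation is a harmless alternative phrasing of the same idea.
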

\begin{proof}
    Since $\mathcal{A}_+$ is positive, then $\mathcal{A}_+^{\frac{1}{2}}$ exists and it is also positive, $\mathcal{A}$ is similar to
$$
\mathcal{A}_+^{-\frac{1}{2}}\mathcal{A}\mathcal{A}_+^{-\frac{1}{2}}=I+\kappa\mathcal{A}_+^{-\frac{1}{2}}\frac{\tilde{N}}{1+e\cos\theta}\mathcal{A}_+^{-\frac{1}{2}}.
$$
Since $\mathcal{A}_+^{-\frac{1}{2}}\frac{\tilde{N}}{1+e\cos\theta}\mathcal{A}_+^{-\frac{1}{2}}$ is self-adjoint, thus the generalized eigenvalues of $\mathcal{A_{+}}$ are real..
\end{proof}
In the following, we will apply the stability criteria in Theorem \ref{thm1.1} to some classical EREs, and give a quantitative estimations of their stable regions or elliptic-hyperbolic regions over the full range $e\in[0,1]$. The details of computation are in Section $5$.
\subsection{Stable region of Lagrange solution}

In this subsection, let $\beta=\beta_{L}$. The essential part of linear Hamiltonian system at Lagrange solution is
\bea\label{eq5.1}
\dot{\gamma}_{\beta,e}(\theta)=J_4B_{L}(\theta)\gamma_{\beta,e}(\theta), \quad \gamma_{\beta,e}(0)=I_4
\eea
with
\bea\label{eq5.2}
B_{L}(\theta)=\left( \begin{array}{cccc} I_{2} & -J_{2} \\
J_{2} & I_{2}-\frac{\mathcal{R}_{L}}{1+e\cos\theta}
\end{array}\right),\ \ \mathcal{R}_{L}=\left(\begin{array}{cc}
    \frac{3+\sqrt{9-\beta_{L}}}{2} & 0 \\
    0 & \frac{3-\sqrt{9-\beta_{L}}}{2}
\end{array}\right),\ \ \beta_{L}\in[0,9],
\eea
we rewrite
\bea
B_{L}(\theta)=B_{Kep}(\theta)+\sigma_{L}(\beta)D_{L}(\theta), \nonumber
\eea
where
\bea\label{Dlag}
\sigma_{L}(\beta)=3-\sqrt{9-\beta},\ \ D_{L}(\theta)=\mathrm{diag}\{O_{2\times 2},\ \frac{\tilde{N}}{2(1+e\cos\theta)}\}, \ \ \tilde{N}=\left(
                                                                                                       \begin{array}{cc}
                                                                                                         1 & 0 \\
                                                                                                         0 & -1 \\
                                                                                                       \end{array}
                                                                                                     \right),\nonumber
\eea
For further calculations, we need the following lemma.
\begin{lem}\label{lem5.1}
   With the notations above, $\mathcal{F}(B_{Kep},D_{L},E_{2}(-I))$ and $\mathcal{F}(B_{Kep},D_{L},\tilde{E}_{2}^\pm(-I))$ have only real eigenvalues.
\end{lem}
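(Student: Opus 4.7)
The plan is to translate the spectral problem to Sturm-Liouville form via Proposition \ref{propltoh} and then apply Proposition \ref{prop2.1}. By Proposition \ref{propltoh}, for each $E\in\{E_{2}(-I),\tilde E_{2}^{\pm}(-I)\}$ the spectrum of $\mathcal{F}(B_{Kep},D_{L},E)$ coincides (with multiplicities) with that of $\mathcal{R}_{1}\mathcal{A}_{Kep}^{-1}$ on the associated Sturm-Liouville domain, where
\begin{equation*}
\mathcal{A}_{Kep}=-\partial_{\theta}^{2}I_{2}-2J_{2}\partial_{\theta}+\frac{\mathrm{diag}(3,0)}{1+e\cos\theta},\qquad \mathcal{R}_{1}=-\frac{\tilde N}{2(1+e\cos\theta)},
\end{equation*}
are the Sturm-Liouville representatives of $B_{Kep}$ and the perturbation $-D_{L}$. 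Reality of the $\mathcal{F}$-spectrum is thus equivalent to reality of the $\sigma\in\mathbb{C}$ for which $(\mathcal{A}_{Kep}+\sigma\mathcal{R}_{1})y=0$ has a nonzero solution on that domain.

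Next I would reduce the antiperiodic case to the Lagrangian subproblems via the $\mathbb{Z}_{2}$ decomposition of Section \ref{sec2}. A direct check shows that $B_{Kep}$ and $D_{L}$ depend on $\theta$ only through $\cos\theta$ and satisfy $\mathcal{N}B_{Kep}\mathcal{N}=B_{Kep}$ and $\mathcal{N}D_{L}\mathcal{N}=D_{L}$ with $\mathcal{N}=\mathrm{diag}(\tilde N,-\tilde N)$, so the reversibility conditions from Section \ref{sec2} hold, and $\mathcal{F}(B_{Kep},D_{L},E_{2}(-I))$ splits as $\mathcal{F}(B_{Kep},D_{L},\tilde E_{2}^{+}(-I))\oplus\mathcal{F}(B_{Kep},D_{L},\tilde E_{2}^{-}(-I))$. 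It therefore suffices to prove reality on each $\tilde E_{2}^{\pm}(-I)$. Using the relation $x=y'+J_{2}y$, the Lagrangian boundary conditions computed in Section \ref{sec2} translate into mixed Neumann--Dirichlet conditions on $(y_{1},y_{2})$ at $\theta=0$ and $\theta=\pi$; these annihilate the boundary terms that will appear in the integrations by parts below.

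To invoke Proposition \ref{prop2.1}, for each $c\in\mathbb{R}$ I would introduce the shifted reference operator
\begin{equation*}
\mathcal{A}_{+}^{(c)}:=-\partial_{\theta}^{2}I_{2}-2J_{2}\partial_{\theta}+\frac{\mathrm{diag}(3-c,c)}{1+e\cos\theta},
\end{equation*}
so that a short algebraic manipulation gives $\mathcal{A}_{Kep}+\sigma\mathcal{R}_{1}=\mathcal{A}_{+}^{(c)}+(c-\sigma/2)\tilde N/(1+e\cos\theta)$. If $c=c(e)\in\mathbb{R}$ can be chosen so that $\mathcal{A}_{+}^{(c)}$ is positive on the Sturm-Liouville domain associated with $\tilde E_{2}^{\pm}(-I)$, then Proposition \ref{prop2.1} forces the values $\kappa=c-\sigma/2$ at which $\mathcal{A}_{+}^{(c)}+\kappa\tilde N/(1+e\cos\theta)$ is degenerate to be real, and hence $\sigma\in\mathbb{R}$.

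The principal obstacle is verifying this positivity uniformly in $e\in[0,1)$. Expanding the quadratic form $\langle \mathcal{A}_{+}^{(c)}y,y\rangle_{L^{2}[0,\pi]}$ and using the mixed Neumann--Dirichlet conditions to kill the boundary terms, the cross term $-2J_{2}\partial_{\theta}$ combines with $(y_{1}')^{2}$ after integration by parts to produce $(y_{1}'-2y_{2})^{2}-4y_{2}^{2}$ on $\tilde E_{2}^{+}(-I)$, so positivity reduces to an estimate of the form
\begin{equation*}
\int_{0}^{\pi}\left[(y_{2}')^{2}+\frac{c\,y_{2}^{2}}{1+e\cos\theta}-4y_{2}^{2}\right]d\theta\ge 0
\end{equation*}
together with a manifestly non-negative remainder (and a symmetric variant on $\tilde E_{2}^{-}(-I)$). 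The sharp Wirtinger constant for the Neumann--Dirichlet eigenvalue problem on $[0,\pi]$ is $1/4$, which controls the $-4y_{2}^{2}$ term away from $e=1$ but breaks down as the weight $1/(1+e\cos\theta)$ becomes small near $\theta=\pi$ in the singular limit $e\to 1$. Handling this singular regime is the hardest part of the argument and will require either an $e$-dependent choice of $c(e)$, a refined weighted integral bound exploiting the concentration of $1/(1+e\cos\theta)$ near $\theta=0$, or a further symplectic reduction driven by the explicit Kepler fundamental solution $\gamma_{Kep}(\theta)$ produced in Theorem \ref{FunKep}. This delicate inequality is the technical content deferred to Section~5 of the paper.
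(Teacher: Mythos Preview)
Your overall framework is correct and matches the paper's: pass to the Sturm--Liouville operator via Proposition~\ref{propltoh}, write $\mathcal{A}_{Kep}+\sigma\mathcal{R}_{1}$ as a shift of some positive operator $\mathcal{A}_{+}^{(c)}$ by $\kappa\,\tilde N/(1+e\cos\theta)$, and invoke Proposition~\ref{prop2.1}. The difference is only in the order of the $\mathbb{Z}_{2}$ decomposition (the paper proves reality on $E_{2}(-I)$ first and then descends to $\tilde E_{2}^{\pm}(-I)$; you decompose first).

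The gap is in the positivity step. You leave $c$ undetermined, attempt a Wirtinger argument that you correctly recognise breaks down as $e\to 1$, and then assert that the inequality is handled in Section~5. It is not: Section~5 is an appendix of explicit formulae for $f_{L,\pm}$ and $\tilde f$ and contains no such estimate. The paper instead fixes the specific value $c=3/2$, i.e.\ takes $\mathcal{A}_{+}^{(3/2)}=\mathcal{A}_{L}(e,9,\theta)=-\partial_{\theta}^{2}I_{2}-2J_{2}\partial_{\theta}+\tfrac{3}{2}(1+e\cos\theta)^{-1}I_{2}$, and cites \cite{HLS14} for the fact that this operator is strictly positive on $\mathbb{D}_{2}(\omega)$ for every $\omega\in\mathbb{U}$ and every $e\in[0,1)$. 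With that single external input, the generalized eigenvalues of $\mathcal{A}_{L}(e,9,\theta)$ are real by Proposition~\ref{prop2.1}, and since $\mathcal{A}_{Kep}=\mathcal{A}_{L}(e,9,\theta)+3\mathcal{R}_{1}$ the eigenvalues of $\mathcal{R}_{1}\mathcal{A}_{Kep}^{-1}$ are the real numbers $\lambda_{j}/(1+3\lambda_{j})$. So the ``delicate inequality'' you are trying to prove directly is not needed: the right move is simply to choose $c=3/2$ and quote \cite{HLS14}.
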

\begin{proof}
Recall that
the Sturm-Liouville operator of (\ref{eq5.1})-(\ref{eq5.2}) is
\bea
\mathcal{A}_{L}(e,\beta,\theta)=-\frac{d^2}{d\theta^2}I_2-2J_2\frac{d}{d\theta}+\frac{\mathcal{R}_{L}}{1+e\cos\theta}, \nonumber
\eea
it is a self-adjoint operator with domain
$$ \mathbb{D}_{2}(\omega)=\{y\in W^{2,2}([0,T],\mathbb{C}^{2})\,|\,y(2\pi)=\omega y(0),\dot{y}(2\pi)=\omega \dot{y}(0)\}, \omega\in\mathbb{C}.$$
Denote by
    \bea
\mathcal{A}_{Kep}(e,\theta)=-\frac{d^2}{d\theta^2}I_2-2J_2\frac{d}{d\theta}+\frac{\mathcal{R}_{kep}}{1+e\cos\theta},\ \ \mathcal{R}_{kep}=\left( \begin{array}{cccc} 3 & 0 \\
0 & 0
\end{array}\right), \nonumber
    \eea
it's easy to see that $\mathcal{A}_{Kep}(e,\theta)=\mathcal{A}_{L}(e,0,\theta)$, it's the Sturm-Liouville operator corresponding to Keplerian solution.
Then we have
\bea\label{5.4}
\begin{aligned}
\mathcal{A}_{L}(e,9,\theta)=\mathcal{A}_{Kep}(e,\theta)-3\mathcal{R}_{1},
\end{aligned}
\eea
where
$$
\mathcal{A}_{L}(e,9,\theta)=-\frac{d^2}{d\theta^2}I_2-2J_2\frac{d}{d\theta}+\frac{3I_2}{2(1+e\cos\theta)},\ \ \mathcal{R}_{1}=\frac{\tilde{N}}{2(1+e\cos\theta)}.
$$
In \cite{HLS14}, they showed that $\mathcal{A}_{L}(e,9,\theta)$ is positive in domain $\mathbb{D}_{2}(\omega)$ for any $\omega\in\mathbb{C}$ and $e\in[0,1)$.
By Proposition \ref{prop2.1}, we know that the eigenvalues of
$\mathcal{R}_{1}\mathcal{A}_{L}(e,9,\theta)^{-1}$ are real, denote these eigenvalues by $\{\lambda_j\}_{j\in\mathbb{Z}}$. On the other hand,
from Lemma \ref{lem4.2.1} and Lemma \ref{4.1}, we know that $\mathcal{A}_{Kep}(e,\theta)$ is invertible in domain $\mathbb{D}_{2}(\omega)$ for any $\omega\neq1$. Then from (\ref{5.4}), it's easy to see that, the eigenvalues of
$\mathcal{R}_{1}\mathcal{A}_{Kep}(e,\theta)^{-1}$ are $\{\frac{\lambda_{i}}{1+3\lambda_{i}}\}\subset \mathbb{R}$. Combined with Remark \ref{detm},
$$\Pi\left(\mathcal{F}\left(B_{Kep},D_{L},E_{2}(-I)\right)\right)=\Pi\left(\mathcal{R}_1 \mathcal{A}_{Kep}^{-1}\right),\ \ \text{with}\ \ \mathcal{R}_{1}=\frac{\tilde{N}}{2(1+e\cos\theta)},$$
we obtain the eigenvalues of $\mathcal{F}(B_{Kep},D_{L},E_{2}(-I))$ are real. By the $\mathbb{Z}_{2}$ decomposition in Section $3.1$, the eigenvalues of $\mathcal{F}(B_{Kep},D_{L},\tilde{E}_{2}^\pm(-I))$ are real.
\end{proof}
In the following content, we always denote by
\bea\label{rho}
\begin{aligned}
&\rho_0(e,\theta)=\int_0^{\theta}\frac{1}{(1+e\cos s)^2}ds,\quad \rho_1(e)=\int_0^{\pi}\int_0^{\theta}\frac{1}{(1+e\cos s)^2}ds d\theta,\\
&\rho_2(e)=\int_0^{\pi}\int_0^{\theta}\frac{1}{1+e\cos s}ds d\theta,\quad \rho_3(e)=\int_0^{\pi}\int_0^{\theta}\rho_0(e,s)ds d\theta.
\end{aligned}
\eea

Then from Theorem \ref{thm1.1}, put $D(\theta)=D_{L}(e,\theta)$ into (\ref{eq4.14}) and (\ref{eq4.18}), we denote the functions $f(e), f_{\pm}(e)$ in this case as $f_{L,\pm}$ and $f_{L}(e)$, respectively.
With the help of software Mathematica, we have
\bea\label{eqflag+}
f_{L,+}(e)=a_{L,0}^+(e)+a_{L,1}^+(e)\rho_1(e)+a_{L,2}^+(e)\rho_2(e)+\int_0^{\pi}\int_0^{\theta}h_{L,+}(e,s,\theta)\rho_0^2(e,s)ds d\theta,
\eea
\bea\label{eqflag-}
f_{L,-}(e)=a_{L,0}^-(e)+a_{L,1}^-(e)\ \rho_1(e)+a_{L,2}^-(e)\ \rho_2(e)+a_{L,3}^-(e)\ \rho_3(e)+\int_0^{\pi}h_{L,-}(e,\theta)\ \rho_0^2(e,\theta)\  d\theta,
\eea
where the explicit expression for $a_{L,0}^\pm(e)$, $a_{L,1}^\pm(e)$, $a_{L,2}^\pm(e)$, $a_{L,3}^-(e)$ and $h_{L,\pm}(e,s,\theta)$ are given in Section \ref{sub6.1}.
By calculation we know that
$
f_{L,+}(e)<f_{L,-}(e),
$
hence
\bea\label{eqflag}
f_{L}(e)=\max\{f_{+}(e),f_{-}(e)\}=f_{L,-}(e).
\eea
Then from Theorem \ref{thm1.1} and Lemma \ref{lem5.1}, we have
$
i_{-1}(\gamma_{\beta,e})=2,
$ for
\bea\label{blag}
\sigma_{L}(\beta)<\frac{1}{\sqrt{f_{L}(e)}}\ \ \text{or equivalently,}\ \ \beta<9-(3-\frac{1}{\sqrt{f_{L}(e)}})^2.
\eea
%
From \cite{HLS14}, we know that
$i_1(\gamma_{\beta,e})=0$ for $\forall (\beta,e)\in [0,9)\times [0,1)$. Since $i_{-1}(\gamma_{\beta,e})=2$ for $\sigma_{L}(\beta)<1/\sqrt{f_{L}(e)}$, then
Theorem \ref{thm3.3} implies
$\gamma_{\beta,e}(2\pi)\approx R(\vartheta_1)\diamond R(\vartheta_2)$ for some $\vartheta_1, \vartheta_2 \in (\pi,2\pi)$. Thus $\gamma_{\beta,e}(2\pi)$ is linearly stable.
This give a quantitative analysis to the stable region of Lagrange solution in Theorem \ref{thm1.2}.

Formula (\ref{blag}) does give a precise curve of $(\beta,e)$ in its domain $[0,9)\times[0,1)$, but it contains integrals, which makes the formula seems so complicated. We will estimate these integrals in order to give a simpler formula. By doing that, we may lose some region, but the simpler formula is better both in aesthetic and application.
By estimating integrals in (\ref{blag}), we obtain two functions $g_{L,\pm}(e)$ satisfying
\bea \label{gf}
g_{L,+}(e)\geq f_{L,+}(e),\quad  g_{L,-}(e)\geq f_{L,-}(e),\ \ \forall e\in [0,1),
\eea
the detailed computation of the estimate and the explicit expression for $g_{L,\pm}(e)$ are given in Section \ref{sub6.2}. Based on this inequalities and Theorem \ref{thm1.2},
we have
\begin{cor}\label{bglag}
    Lagrange solution with mass parameter $\beta_{L}$ and eccentricity $e$ is linearly stable if
    \bea\label{eqbg-}
\beta_{L}<9-\left(3-1/\sqrt{g_{L,-}(e)}\right)^2,
    \eea
    where $g_{L,-}(e)$ is defined in (\ref{eqglag}).
\end{cor}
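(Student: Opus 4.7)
The plan is to deduce this corollary directly from Theorem \ref{thm1.2} by exploiting the chain of inequalities $f_L(e) = f_{L,-}(e) \leq g_{L,-}(e)$. Since the role of $g_{L,-}$ is to serve as a cleaner, integral-free majorant of the true trace bound $f_L$, the corollary is a monotonicity statement rather than a new stability result.

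First I would record the two ingredients already in hand. From (\ref{eqflag}) we have the identity $f_L(e) = f_{L,-}(e)$ (because the calculation $f_{L,+}(e) < f_{L,-}(e)$ has been verified), and from (\ref{gf}) we have the estimate $f_{L,-}(e) \leq g_{L,-}(e)$ for every $e \in [0,1)$. Chaining these yields $f_L(e) \leq g_{L,-}(e)$, which will be the only analytic input needed.

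Next I would translate this inequality into the mass-parameter coordinate. Taking reciprocals and square roots gives $1/\sqrt{g_{L,-}(e)} \leq 1/\sqrt{f_L(e)}$. As long as $g_{L,-}(e) \geq 1/9$ (so that both quantities $3 - 1/\sqrt{g_{L,-}(e)}$ and $3 - 1/\sqrt{f_L(e)}$ lie in the interval $[0,3]$, which I would check is automatic from the construction of $g_{L,-}$ in Section \ref{sub6.2}), the map $x \mapsto (3-x)^2$ is monotone decreasing on that interval, so
\begin{equation}
    9 - \left(3 - 1/\sqrt{g_{L,-}(e)}\right)^2 \;\leq\; 9 - \left(3 - 1/\sqrt{f_L(e)}\right)^2.\nonumber
\end{equation}
Consequently, any $(\beta_L, e)$ satisfying the hypothesis (\ref{eqbg-}) of the corollary automatically satisfies the hypothesis of Theorem \ref{thm1.2}, which immediately yields the linear stability conclusion.

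The only nontrivial point is verifying that $g_{L,-}(e) \geq 1/9$ for all $e \in [0,1)$, i.e.\ that the estimate is not so loose as to push $3 - 1/\sqrt{g_{L,-}(e)}$ outside the intended range where the squaring comparison goes the correct way; this is essentially a sanity check on the explicit formula in (\ref{eqglag}). I expect this to be transparent from the form of $g_{L,-}$ constructed in Section \ref{sub6.2}, so the main substantive work of this corollary has in fact already been carried out there, namely the derivation of the integral-free majorant $g_{L,-}(e)$ satisfying (\ref{gf}). The corollary itself is then just the resulting monotone rewriting of Theorem \ref{thm1.2}'s stability condition.
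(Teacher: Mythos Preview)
Your proposal is correct and matches the paper's approach exactly: the paper simply states that the corollary follows ``based on this inequalities [(\ref{gf})] and Theorem \ref{thm1.2}'', and your argument just spells out this monotonicity step. Your additional remark about checking $g_{L,-}(e)\geq 1/9$ is a reasonable sanity check that the paper leaves implicit, but otherwise there is nothing to add.
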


In \cite{HLS14}, they proved that there exist  two curves $\beta_s(e)$ and $\beta_m(e)$ satisfying $\beta_s\leq \beta_m(e)$ for $e\in [0,1)$ such that for $(\beta,e)$ satisfying $\beta<\beta_s(e)$,  Lagrange solution is strongly linearly stable; for $(\beta,e)$ satisfying $\beta\in(\beta_s(e),\beta_m(e))$, Lagrange solution is elliptic-hyperbolic; for $(\beta,e)$ satisfying $\beta\in(\beta_m(e),9]$, Lagrange solution is hyperbolic. These results give an analytical proof of the bifurcation diagram of \cite{MSS06}. Corollary \ref{bglag} gives an estimate of EE region quantitatively and (\ref{eqglag}) could give an explicit lower estimate of $\beta_s(e)$ and $\beta_m(e)$, see Figure \ref{fig:gLag}. The left curve in Figure \ref{fig:gLag} is given by  (\ref{eqbg-}) and the right one is by $$\beta_{L}<9-\left(3-1/\sqrt{g_{L,+}(e)}\right)^2,$$ the shadow area is a part of EE region.\ Two curves in Figure \ref{fig:gLag} intersect $\beta$ axis at about $(0,\ 0.7469)$, it is noticed that this bifurcation point is $(0,\ 0.75)$ precisely.
\begin{figure}[H]
    \centering
    \includegraphics[width=6cm]{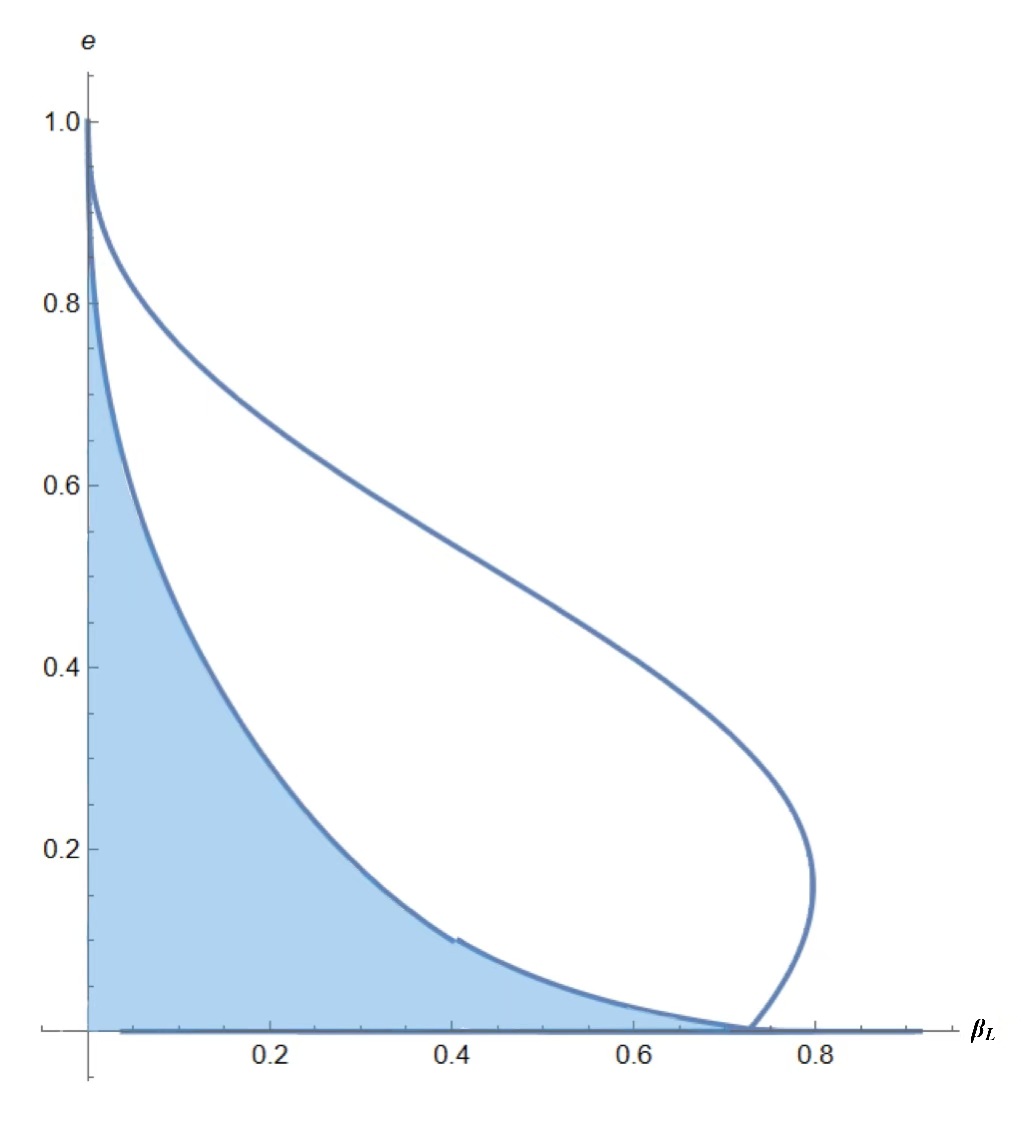}
    \caption{Curves $g_{L,-}(e)$ and $g_{L,+}(e)$.\ Two curves intersect $\beta$ axis at about $(0,\ 0.7469)$.\ The shadow is a lower estimate of EE region of Lagrange Solution.  Here we choose $e_0=0.1$, then $9-\left(3-1/\sqrt{\check{g}_{L,-}(0.1)}\right)^2=0.4077$ and $9-\left(3-1/\sqrt{\hat{g}_{L,-}(0.1)}\right)^2=0.4006$.}
    \label{fig:gLag}
\end{figure}
\begin{rem}
    Compared with the results in \cite{HOW15} and \cite{HOW19}, our estimates contain the whole range of eccentricity $e\in[0,1)$. Combine all of these regions, a better estimate could be obtained.
\end{rem}

\subsection{Elliptic-Hyperbolic region of Euler solution}

In this subsection, let $\beta=\beta_E$. The essential part of linearized Hamiltonian system at Euler solution is
\bea\label{eq5.13}
\dot{\gamma}_{\beta,e}(\theta)=J_4B_{E}(\theta)\gamma_{\beta,e}(\theta), \quad \gamma_{\beta,e}(0)=I_4,\nonumber
\eea
with
\bea\label{BE}
B_{E}(\theta)=\left( \begin{array}{cccc} I_{2} & -J_{2} \\
J_{2} & I_{2}-\frac{\mathcal{R}_{E}}{1+e\cos\theta}
\end{array}\right),\ \ \mathcal{R}_{E}=\left(\begin{array}{cc}
    2\beta_{E}+3 & 0 \\
    0 & -\beta_{E}
\end{array}\right),\ \ \beta_{E}\in[0,7],
\eea
then
\bea
B_E(\theta)=B_{Kep}(\theta)+Q_E(\theta),\ \
Q_E(\theta)=\mathrm{diag}\{O_{2\times2},\ \left(\begin{array}{cc}
    \frac{-2\beta}{1+e\cos\theta} & 0 \\
    0 & \frac{\beta}{1+e\cos\theta}
\end{array}\right)\}.\nonumber
\eea
Let
\bea
D_E(\theta)=\mathrm{diag}\{O_{2\times2},\ \frac{-\tilde{N}}{2(1+e\cos\theta)}\}=-D_{L}(e,\theta),\nonumber
\eea
then $Q_E(\theta)\leq 2\beta D_E(\theta)$. Thus we have
\bea
A-J-B_{Kep}(\theta)-2\beta D_E(\theta)\leq A-J-B_E(\theta)\ \ \text{in}\ \ \tilde{E}_{\mathfrak{n}}^\pm(-I),\nonumber
\eea
From \cite{ZL17}, we know that the relative Morse index $\mathcal{I}(A|_{E_{\mathfrak{n}}(-I)},A|_{E_{\mathfrak{n}}(-I)}-B_E)$ is non-decreasing, hence if $A-\nu J-B_{Kep}-2\beta D_E$ is non-degenerate for $\forall \beta\in[0, \varepsilon]$ with some $\varepsilon>0$, so is $A-\nu J-B_E$. We only need to estimate
the non-degeneracy of $A-J-B_{Kep}-2\beta D_E$ with $\beta=\beta_{E}$.
This closely resembles the case of Lagrange solution, where we estimate the non-degeneracy of operator $A-J-B_{Kep}-\sigma_{L}(\beta_{L}) D_{L}$ with $\sigma_{L}(\beta_{L})=3-\sqrt{9-\beta_{L}}$.
Similar to the case of Lagrange solution, applying Theorem \ref{thm1.1} to $A-J-B_{Kep}-2\beta_{E} D_E$ with domain $\tilde{E}_{\mathfrak{n}}^{\pm}(-I)$. The operator
$A-J-B_{Kep}-2\beta_{E} D_E$ is non-degenerate in $\tilde{E}_{\mathfrak{n}}^{\pm}(-I)$, for
$$
2\beta_{E}<\frac{1}{\sqrt{f_{E}(e)}},\ \ f_{E}(e)=\max\{f_{E,+}(e),f_{E,-}(e)\},
$$
where
\bea
f_{E,\pm}(e)=Tr(\mathcal{F}^2(B_{Kep}, D_E; \tilde{E}_{\mathfrak{n}}^{\pm}(-I)))\nonumber
\eea
Since $D_{E}(\theta)=-D_{L}(\theta)$, we have
\bea
f_{E,\pm}(e)=Tr(\mathcal{F}^2(B_{Kep}, D_E; \tilde{E}_{\mathfrak{n}}^{\pm}(-I)))=Tr(\mathcal{F}^2(B_{Kep}, D_{L}; \tilde{E}_{\mathfrak{n}}^{\pm}(-I)))=f_{L,\pm}(e), \nonumber
\eea
hence $f_{E}(e)=f_{L}(e)=\max\{f_{L,+}(e),f_{L,-}(e)\}$ and $A-J-B_E$ is non-degenerate in $\tilde{E}_{\mathfrak{n}}^{\pm}(-I)$ for
$$
\beta_{E}<\frac{1}{2\sqrt{f_{L}(e)}}.
$$
On the other hand, in Theorem 1.3 (\romannumeral3) and Theorem 1.5 (\romannumeral2) of \cite{ZL17}, they prove that for $(\beta_{E},e)\in [0,7] \times [0,1)$, there exist countable $\pm1$-degenerate curves from left to right in $[0,7]\times [0,1)$ by
\bea
0,\ \Theta_1^-,\ \Theta_1^+,\ \Gamma_1,\ \Theta_2^-,\ \Theta_2^+,\ \Gamma_2,\ \cdots,\ \Theta_n^-,\ \Theta_n^+,\ \Gamma_n,\ \cdots, \nonumber
\eea
where $0$ means the curve $\{(\beta_{E},e):\beta_{E}=0, e\in[0,1)\}$, by $\{\Theta_n^\pm\}_{n=1}^{\infty}$ means the $-1$-degenerate curves on $\tilde{E}_{-I}^{\pm}$, and by $\{\Gamma_n\}_{n=1}^{\infty}\cup\{0\}$ means the $1$-degenerate curves. Moreover, in the region between $0$ and $\Theta_1^-$, Euler solution is elliptic-hyperbolic. From above analysis, we know that the first
$-1$-degenerate curve $\Theta_1^-$ must on the right side of curve
$$
\{(1/2\sqrt{f_{L}(e)},e):e\in[0,1)\},
$$
hence the Euler solution is elliptic-hyperbolic for
$
\beta_{E}<1/(2\sqrt{f_{L}(e)}).
$
This give a quantitative analysis to the Elliptic-Hyperbolic region of Euler solution in Theorem \ref{thm1.2}.

Moreover, similar the Corollary \ref{bglag}, from inequality (\ref{gf}), we have
\begin{cor}\label{gE2}
    Euler solution with parameter $\beta_{E}$ and eccentricity $e$ is elliptic-hyperbolic if
    \bea\label{eqdE-}
\beta_{E}<\frac{1}{2\sqrt{g_{L,-}(e)}},
    \eea
where $g_{L,-}(e)$ are given by (\ref{eqglag}) in Section \ref{sub6.2}.
\end{cor}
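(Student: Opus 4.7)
The plan is to deduce Corollary \ref{gE2} as a direct consequence of the Euler part of Theorem \ref{thm1.2} together with the explicit majorization $g_{L,-}(e)\geq f_{L,-}(e)$ recorded in (\ref{gf}). No new index-theoretic or trace-formula input is required; the corollary is a monotonicity repackaging that replaces the implicitly defined $f_L(e)$ by the explicit function $g_{L,-}(e)$.

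First I would recall that Theorem \ref{thm1.2} already establishes the elliptic-hyperbolic criterion
\[
\beta_E<\frac{1}{2\sqrt{f_L(e)}},\qquad f_L(e)=\max\{f_{L,+}(e),f_{L,-}(e)\}.
\]
Next, I would point out the identification $f_L(e)=f_{L,-}(e)$ proved in the discussion surrounding (\ref{eqflag}) (namely $f_{L,+}(e)<f_{L,-}(e)$ for $e\in[0,1)$). Hence the Euler criterion may equivalently be read as $\beta_E<\frac{1}{2\sqrt{f_{L,-}(e)}}$.

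Then the pointwise inequality $g_{L,-}(e)\geq f_{L,-}(e)$ from (\ref{gf}), whose derivation is reduced in Section \ref{sub6.2} to elementary estimates of the integrals $\rho_0,\rho_1,\rho_2,\rho_3$ defined in (\ref{rho}), yields
\[
\frac{1}{2\sqrt{g_{L,-}(e)}}\leq \frac{1}{2\sqrt{f_{L,-}(e)}}=\frac{1}{2\sqrt{f_L(e)}}.
\]
Thus any $(\beta_E,e)$ with $\beta_E<\frac{1}{2\sqrt{g_{L,-}(e)}}$ automatically satisfies the hypothesis of the Euler part of Theorem \ref{thm1.2}, and the conclusion (elliptic-hyperbolic) follows.

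There is no genuine obstacle in this step: the only substantive inequality needed, $g_{L,-}(e)\geq f_{L,-}(e)$, is asserted by (\ref{gf}) and carried out in Section \ref{sub6.2}. The role of the corollary is aesthetic and practical, replacing the integral expression $f_{L,-}(e)$ with the closed-form majorant $g_{L,-}(e)$ so that the stability region can be drawn and compared explicitly, at the cost of losing a small sliver of the true elliptic-hyperbolic region. I would close by noting, parallel to Corollary \ref{bglag} for the Lagrange case, that one could also use $g_{L,+}(e)$ to obtain a second explicit lower bound and take the union of the two regions for a sharper picture.
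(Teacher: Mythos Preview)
Your proposal is correct and matches the paper's own argument: the paper simply says ``similar [to] Corollary \ref{bglag}, from inequality (\ref{gf})'' and states the result, which is precisely the monotonicity repackaging you describe. The only minor remark is that the paper does not explicitly invoke the identification $f_L(e)=f_{L,-}(e)$ at this point, but since (\ref{gf}) gives $g_{L,-}(e)\geq f_{L,-}(e)$ and (\ref{eqflag}) gives $f_L(e)=f_{L,-}(e)$, your chain of inequalities is exactly what is needed.
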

 The region in Corollary \ref{gE2} is shown in Figure \ref{fig:gE-}, the curve is given by (\ref{eqdE-}), it intersects $\delta$ axis at about $(0,\ 0.0636)$.
\begin{figure}[H]
    \centering
    \includegraphics[width=6.2cm]{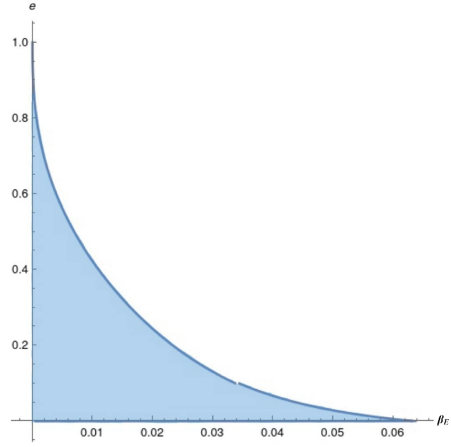}
    \caption{Estimate of EH region of Euler Solution. The curve intersects $\delta$ axis at about $(0,\ 0.0636)$. Here we choose $e_0=0.1$, then $1/(2\sqrt{\check{g}_{L,-}(0.1)})=0.0344$ and $1/(2\sqrt{\hat{g}_{L,-}(0.1)})=0.0338$. }
    \label{fig:gE-}
\end{figure}

\subsection{Stable region of the $(\alpha,\eta)$-type system}

In this subsection, we will introduce a more general case, the $(\alpha,\eta)$-type system which is useful in study the stability of system with high dimension. We have given a quantitative stability analysis for the planar $3$-body EREs, but as the number of celestial bodies increases, the dimensions of the system also increase, hence it's hard to analyze the stability. Based on the index theory, we can use a simplified system to control the original system, this simplified system can be decomposed to a series $(\alpha,\eta)$-type systems.
As an application, we will see how to use the $(\alpha,\eta)$-type system to estimate the stability of the regular $(1+n)$-gon system in Section $5.4$.

First, we consider
\bea\label{eq5.21}
\dot{\gamma}_{\zeta,e}(\theta)=J(B_{Kep}(\theta)+\zeta\tilde{D}(\theta))\gamma_{\zeta,e},\quad \gamma_{\zeta,e}(0)=I_4
\eea
with $\tilde{D}(e,\theta)=\mathrm{diag}\{O_{2\times 2},\ \frac{I_2}{1+e\cos\theta}\}$.
Put $D(\theta)=\tilde{D}(e,\theta)$ into (\ref{eq4.14}) and (\ref{eq4.18}), with the help of software Mathematica, we have
\bea\label{f-}
\tilde{f}(e)=\max\{f_{-}(e),f_{+}(e)\}=\tilde{a}_0(e)+\tilde{a}_1(e)\ \rho_1(e)+\tilde{a}_2(e)\ \rho_2(e)+\tilde{a}_3(e)\ \rho_3(e)+\int_0^{\pi}\tilde{h}(e,\theta)\ \rho_0^2(e,\theta)\ d\theta,
\eea
the explicit expression for $\tilde{a}_0(e)$, $\tilde{a}_1(e)$, $\tilde{a}_2(e)$, $\tilde{a}_3(e)$ and $\tilde{h}(e,\theta)$ are given in Section \ref{sub6.3}.
Then, by Corollary \ref{cor4.4}, we have
\begin{thm}\label{thm5.6}
    Suppose $\gamma_{\zeta,e}(\theta)$ satisfies (\ref{eq5.21}), then $i_{-1}(\gamma_{\zeta,e})=2$ when
    \bea
|\zeta|<\frac{1}{\sqrt{\tilde{f}(e)}},\nonumber
    \eea
    where $\tilde{f}(e)$ is defined in (\ref{f-}).
\end{thm}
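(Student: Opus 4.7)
The plan is to apply Corollary \ref{cor4.4} directly to the Hamiltonian system (\ref{eq5.21}), taking $B = B_{Kep}$ and $D = \tilde{D}$. The hypotheses of the corollary require that $\tilde{D}$ be sign-definite so that the operator $\tilde{D}(A|_E-B_{Kep})^{-1}$ has real spectrum. Since $\tilde{D}(e,\theta)=\mathrm{diag}\{O_{2\times 2},\,I_2/(1+e\cos\theta)\}$ is only positive semi-definite, I would first verify the reality of the spectrum by an auxiliary similarity argument: writing $\tilde{D}=\tilde{D}^{1/2}\tilde{D}^{1/2}$ with $\tilde{D}^{1/2}=\mathrm{diag}\{O_{2\times 2},\,I_2/\sqrt{1+e\cos\theta}\}$, the operator $\mathcal{F}(B_{Kep},\tilde{D};E)$ has the same nonzero spectrum as the self-adjoint operator $\tilde{D}^{1/2}(A|_E-B_{Kep})^{-1}\tilde{D}^{1/2}$, provided $A|_E-B_{Kep}$ is invertible on $E$. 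Invertibility on $\tilde{E}_2^\pm(-I)$ follows from Lemma \ref{lem4.2.1}, which gives $\nu_{-1}(\gamma_{0,e})=0$. Equivalently, via Proposition \ref{propltoh} one can reduce to the Sturm--Liouville generalized eigenvalue problem $(\mathcal{A}_{Kep}+\kappa\mathcal{R}_1)y=0$ with strictly positive weight $\mathcal{R}_1=I_2/(1+e\cos\theta)>0$, which by the argument of Proposition \ref{prop2.1} has only real $\kappa$.

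Once the spectral condition is in hand, the remainder is an application of the machinery already developed in the proof of Theorem \ref{thm1.1}. Specifically, I would take the same Lagrangian frames $Z_{0,\pm}, Z_{1,\pm}$ and the transition matrices $\mathcal{P}_\pm$ used there, and substitute the present $D=\tilde{D}$ into formulas (\ref{eq4.14}) and (\ref{eq4.18}) for the traces on the $\mathbb{Z}_2$-decomposed subspaces $\tilde{E}_2^\pm(-I)$. This yields
\[
f_\pm(e)=-2\sum_{j=2}^{3}\int_0^\pi\!\!\int_0^\theta \bigl(\tilde{D}_{ij}^\pm(\theta)\tilde{D}_{ji}^\pm(s)+\tilde{D}_{4j}^\pm(\theta)\tilde{D}_{j4}^\pm(s)\bigr)\,ds\,d\theta,
\]
with $\tilde{D}_{ij}^\pm=(\mathcal{P}_\pm^{-1}J\hat{\tilde{D}}(\theta)\mathcal{P}_\pm)_{ij}$ and $\hat{\tilde{D}}(\theta)=\gamma_{Kep}^{\mathcal{T}}(\theta)\tilde{D}(\theta)\gamma_{Kep}(\theta)$. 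Since $\gamma_{Kep}(\theta)$ is given explicitly in (\ref{eq4.5}) and $\tilde{D}$ has a very simple block form, the inner matrix products can be written down in closed form as rational trigonometric expressions in $\theta$ with denominators of the shape $(1+e\cos\theta)^k$. The resulting double integrals would then be decomposed into the basic integrals $\rho_0,\rho_1,\rho_2,\rho_3$ defined in (\ref{rho}), producing the representation (\ref{f-}) for $\tilde{f}(e)=\max\{f_+(e),f_-(e)\}$.

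Given the estimate $\mathrm{Tr}(\mathcal{F}^2(B_{Kep},\zeta\tilde{D};\tilde{E}_2^\pm(-I)))=\zeta^2 f_\pm(e)$, the bound $|\zeta|<1/\sqrt{\tilde{f}(e)}$ forces the trace on both subspaces to stay below one. Corollary \ref{cor4.4} then yields $\mathcal{I}(A|_{E_2(-I)}-B_{Kep},\,A|_{E_2(-I)}-B_{Kep}-\zeta\tilde{D})=0$ on each piece of the $\mathbb{Z}_2$ decomposition, and combining with Proposition \ref{prop3.6} and the base-case computation $i_{-1}(\gamma_{0,e})=2$ from Lemma \ref{lem4.2.1} gives $i_{-1}(\gamma_{\zeta,e})=2$, completing the proof.

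The main obstacle is not conceptual but computational: producing a clean closed form for $\tilde{f}(e)$. The 4x4 matrix $\gamma_{Kep}^{\mathcal{T}}\tilde{D}\gamma_{Kep}$ is dense, the double integration over the triangle $\{0\le s\le \theta\le \pi\}$ mixes $\rho_0(e,\theta)$ with trigonometric polynomials, and extracting the coefficients $\tilde{a}_0,\tilde{a}_1,\tilde{a}_2,\tilde{a}_3$ and the residual kernel $\tilde{h}(e,\theta)$ in front of $\rho_0^2(e,\theta)$ is delicate. This is where I would appeal to symbolic computation (as the authors do in Section \ref{sub6.3}) rather than attempting the integrations by hand.
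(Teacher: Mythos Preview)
Your proposal is correct and follows essentially the same route as the paper, which simply states ``by Corollary~\ref{cor4.4}'' after computing $\tilde f(e)$ via (\ref{eq4.14}) and (\ref{eq4.18}). You are in fact more careful than the paper: you noticed that $\tilde D(e,\theta)=\mathrm{diag}\{O_{2\times2},\,I_2/(1+e\cos\theta)\}$ is only positive \emph{semi}-definite, so Corollary~\ref{cor4.4} does not apply verbatim, and you supplied the standard fix (either the square-root similarity $\tilde D^{1/2}(A-B_{Kep})^{-1}\tilde D^{1/2}$, or the reduction via Proposition~\ref{propltoh} to the Sturm--Liouville problem with strictly positive weight $\mathcal{R}_1=I_2/(1+e\cos\theta)$). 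One small typo: in your displayed trace formula the first pair of indices should read $\tilde D_{1j}^\pm(\theta)\tilde D_{j1}^\pm(s)$ (resp.\ $3j,j3$ on $\tilde E^-$), matching (\ref{eq4.14})--(\ref{eq4.18}).
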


Using Mathematica, we obtain the region of $\zeta$ in Theorem \ref{thm5.6} in Figure \ref{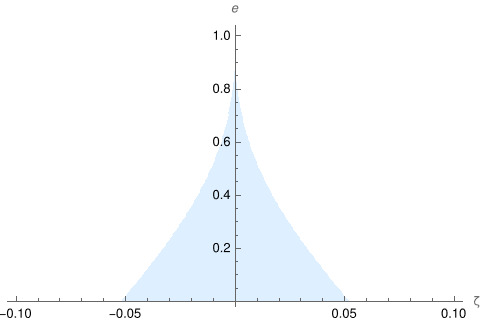}.
\begin{figure}[H]
    \centering
    \includegraphics[width=9cm]{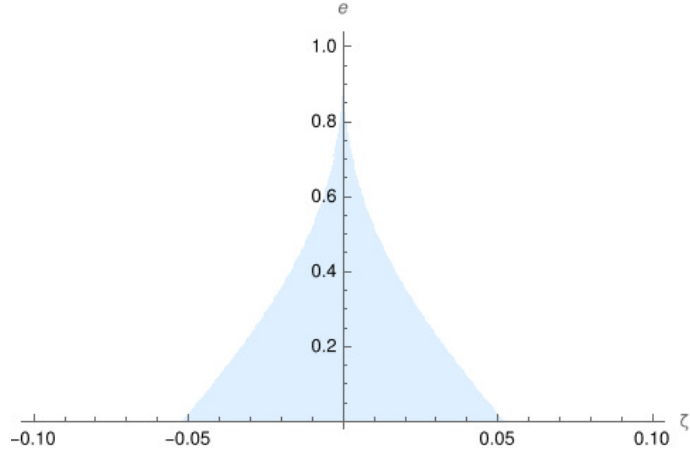}
    \caption{The curve intersects $\zeta$ axis at about $(0,\ -0.0523)$ and $(0,\ 0.0523)$. }
    \label{alpha.jpg}
\end{figure}

Similar the (\ref{gf}), we can estimating $\tilde{f}(e)$ to get a simpler formula, we have

\begin{cor}\label{gE}
    Suppose $\gamma_{\zeta,e}(\theta)$ satisfies (\ref{eq5.21}), then $i_{-1}(\gamma_{\zeta,e})=2$ when
    \bea
|\zeta|<\frac{1}{\sqrt{\tilde{g}(e)}},\nonumber
    \eea
    where
    \bea\label{tildg}
\tilde{f}(e)\leq\tilde{g}(e)=\left\{\begin{aligned}
    &\check{\tilde{g}}(e),\quad e\in[0,e_0),\\
    &\hat{\tilde{g}}(e),\quad e\in[e_0,1),
\end{aligned}\right.
\eea
with $e_0\in (0,\frac{224}{27\pi^2}]$, the explicit expression of $\check{\tilde{g}}(e), \hat{\tilde{g}}(e)$ are given by (\ref{tildg1}) and (\ref{tildg2}) in Section \ref{sub6.3}.
\end{cor}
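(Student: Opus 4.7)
My plan is to reduce Corollary \ref{gE} to Theorem \ref{thm5.6} by proving the pointwise inequality $\tilde{f}(e)\leq \tilde{g}(e)$ for all $e\in[0,1)$. Once this is in hand, monotonicity of $x\mapsto 1/\sqrt{x}$ on $(0,\infty)$ gives $1/\sqrt{\tilde{g}(e)}\leq 1/\sqrt{\tilde{f}(e)}$, so the hypothesis $|\zeta|<1/\sqrt{\tilde{g}(e)}$ automatically forces $|\zeta|<1/\sqrt{\tilde{f}(e)}$, and Theorem \ref{thm5.6} delivers $i_{-1}(\gamma_{\zeta,e})=2$. Thus the entire task is the verification of this single inequality, and no new spectral-flow or trace-formula work is needed.

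Writing
\[
\tilde{f}(e)=\tilde{a}_0(e)+\tilde{a}_1(e)\rho_1(e)+\tilde{a}_2(e)\rho_2(e)+\tilde{a}_3(e)\rho_3(e)+\int_0^{\pi}\tilde{h}(e,\theta)\rho_0^2(e,\theta)\,d\theta
\]
from (\ref{f-}), I would attack each summand separately. The coefficients $\tilde{a}_j(e)$ are explicit algebraic functions of $e$ (displayed in Section \ref{sub6.3}) and need no estimation. The integrals $\rho_1(e),\rho_2(e),\rho_3(e)$ defined in (\ref{rho}) are each monotone increasing in $e$ because $(1+e\cos s)^{-k}$ is monotone in $e$ pointwise in $s$, and each admits a closed-form antiderivative via the Weierstrass tangent half-angle substitution; these closed forms are the natural source of explicit majorants. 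The coefficient $\tilde{h}(e,\theta)$ is a rational function of $e,\cos\theta,\sin\theta$ whose positive part I would dominate by a simpler function of $\theta$ with explicit $e$-dependence before integrating against $\rho_0^2$.

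The piecewise definition of $\tilde{g}(e)$ reflects that the efficient majorant differs in the two regimes. For $e\in[0,e_0)$, the kernel $(1+e\cos\theta)^{-k}$ stays bounded, and an upper bound $\check{\tilde{g}}(e)$ arises naturally by Taylor-expanding each factor about $e=0$ and controlling the remainder geometrically. For $e\in[e_0,1)$, the function $\rho_0(e,\theta)$ develops a sharp peak at $\theta=\pi$ and the final integral becomes the dominant term; here I would use the closed-form expression for $\rho_0$ together with a pointwise bound on $\tilde{h}(e,\theta)$ to extract the explicit $\hat{\tilde{g}}(e)$. The threshold $e_0\in(0,224/(27\pi^{2})]$ is chosen so that the piecewise formula is consistent, i.e.\ so that on each piece the stated majorant actually dominates $\tilde{f}$.

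The main obstacle will be the $\rho_0^2$-integral as $e\to 1$: a naive estimate like $\rho_0(e,\theta)\leq \rho_0(e,\pi)$ followed by termwise integration of $\tilde{h}$ would blow up too quickly and shrink the admissible $\zeta$-region to nothing well before $e=1$. The delicate step is to factor out the correct singular behaviour of $\rho_0^2$ near $\theta=\pi$, match it against the algebraic decay of $\tilde{h}(e,\theta)$ there, and collect the result in closed form. Once each of the five summands of $\tilde{f}(e)$ is dominated by the corresponding summand of $\check{\tilde{g}}(e)$ on $[0,e_0)$ and of $\hat{\tilde{g}}(e)$ on $[e_0,1)$, the inequality $\tilde{f}(e)\leq \tilde{g}(e)$ holds on all of $[0,1)$, and the corollary follows immediately from Theorem \ref{thm5.6}.
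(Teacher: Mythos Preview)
Your overall logical skeleton is exactly the paper's: once the pointwise inequality $\tilde{f}(e)\leq\tilde{g}(e)$ is known, the corollary is immediate from Theorem~\ref{thm5.6}, and the only work is to bound each summand of $\tilde{f}(e)$ in (\ref{f-}) by an explicit elementary function of~$e$.

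Where you diverge from the paper is in \emph{how} to obtain those bounds, and in your explanation of the piecewise split. You propose Taylor expansion about $e=0$ on $[0,e_0)$ and a separate singularity analysis of the $\rho_0^2$-integral on $[e_0,1)$, with the regime boundary determined by where the $\rho_0^2$-term becomes dominant. The paper does something more uniform and more elementary: it simply recycles the four pointwise estimates on $\rho_0,\rho_1,\rho_2,\rho_3$ displayed at the beginning of Section~\ref{sub6.2} (the same ones used for the Lagrange case), substitutes them into (\ref{f-}), and integrates in closed form. In particular the bound $\rho_0(e,\theta)\leq\theta/(1-e^2)^{3/2}$ is used globally on $[0,1)$, with no special treatment of the peak at $\theta=\pi$; the $(1-e^2)^{-5}$ blow-up you worried about is exactly what appears in the denominators of $\check{\tilde{g}}$ and $\hat{\tilde{g}}$ in (\ref{tildg1})--(\ref{tildg2}), and is accepted rather than avoided. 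The piecewise structure of $\tilde{g}$ comes from a single source, the two-branch upper bound on $\rho_2(e)$ (polynomial for $e<e_0$, logarithmic for $e\geq e_0$), not from a change of method between regimes. Your route would also work and might yield a sharper $\tilde{g}$, but at the cost of considerably more analysis than the paper actually carries out.
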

Using Mathematica, we show the figure of Corollary \ref{gE} here.
\begin{figure}[H]
    \centering
    \includegraphics[width=4.7cm]{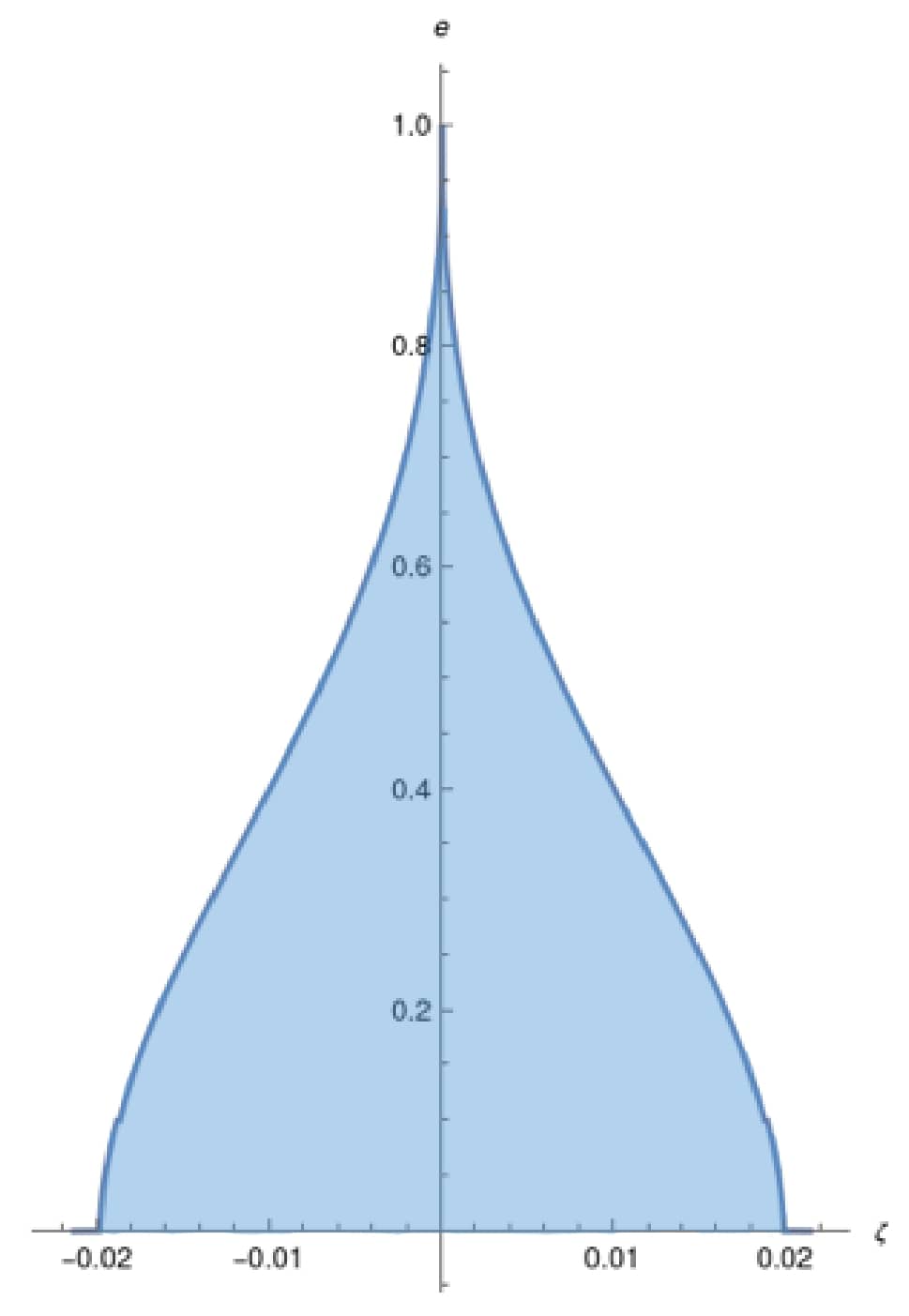}
    \caption{The curve intersects $\zeta$ axis at about $(0,\ -0.0193)$ and $(0,\ 0.0193)$. Here we choose $e_0=0.1$, then $1/(\check{\tilde{g}}_-(0.1))^{1/2}=0.0189,\ 1/(\hat{\tilde{g}}_-(0.1))^{1/2}=0.0187$. }
    \label{fig:tg-}
\end{figure}
Now, we can prove Theorem \ref{thm1.5},
\begin{proof}
We introduce the $(\alpha,\eta)$-type system, by
\bea\label{apl-eta}
B_{\alpha,\eta}(\theta)=\left(\begin{array}{cc}
    I_2 & -J_2 \\
    J_2 & I_2-\frac{R_{\alpha,\eta}}{1+e\cos\theta}
\end{array}\right),
\eea
where $R_{\alpha,\eta}=\alpha I_2+\eta \tilde{N}$ with $\alpha\geq 1$, $\eta\geq 0$ and $\tilde{N}=\left(\begin{array}{cc}
    1 & 0 \\
    0 & -1
\end{array}\right)$.
Let we consider the trace formula for
\bea\label{eq5.25}
\dot{\gamma}_{\alpha,\eta,e}(\theta)=JB_{\alpha,\eta}\gamma_{\alpha,\eta,e}(\theta),\quad \gamma_{\alpha,\eta,e}(0)=I_4,
\eea
Then
\bea
B_{\alpha,\eta}=B_{Kep}(\theta)+Q_{\alpha,\eta}(\theta),\nonumber
\eea
where
\bea
Q_{\alpha,\eta}(\theta)=\mathrm{diag}\{O_{2\times 2},\ \left(\begin{array}{cc}
    \frac{3-(\alpha+\eta)}{1+e\cos\theta} & 0  \\
    0 & \frac{\eta-\alpha}{1+e\cos\theta}
\end{array}\right)\}.\nonumber
\eea
For $\forall (\alpha,\eta)\in[1,\infty)\times [0,\infty)$, let $\zeta(\alpha,\eta)=\max\{|3-(\alpha+\eta)|,\ |\eta-\alpha|\}$, $\zeta$ is continuous with respect to $(\alpha,\eta)$, then $-\zeta(\alpha,\eta)\tilde{D}(\theta)\leq Q_{\alpha,\eta}(\theta)\leq\zeta(\alpha,\eta)\tilde{D}(\theta)$, hence
if $A-\nu J-B_{Kep}\pm\zeta(\alpha,\eta)\tilde{D}$ is non-degenerate for $\zeta(\alpha,\eta)\in[0,\varepsilon]$, so is $A-\nu J-B_{Kep}-Q_{\alpha,\eta}$. Then, from Theorem \ref{thm5.6}, we have
\bea
i_{-1}(\gamma_{\alpha,\eta,e})=2, \ \ \text{if}\ \ |\zeta(\alpha,\eta)|<\frac{1}{\sqrt{\tilde{f}(e)}},\nonumber
\eea
    where $\tilde{f}(e)$ is defined in (\ref{f-}). Moreover, if for such $(\alpha,\eta, e)$, $i_1(\gamma_{\alpha,\eta,e})=0$ holds, then Theorem \ref{thm3.3} implies $\gamma_{\alpha,\eta,e}(T)$ is linearly stable. This completes the proof of Theorem \ref{thm1.5}.
\end{proof}
From Theorem \ref{thm1.5} and Corollary \ref{gE}, we have
\begin{cor}\label{abg}
suppose $\gamma_{\alpha,\eta,e}(\theta)$ satisfies (\ref{eq5.25}), then $i_{-1}(\gamma_{\alpha,\eta,e})=2$ if
    \bea
|\zeta(\alpha,\eta)|<\frac{1}{\sqrt{\tilde{g}(e)}},\nonumber
    \eea
    where $\tilde{g}(e)$ is defined in (\ref{tildg}).
\end{cor}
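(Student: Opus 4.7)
The plan is to deduce this corollary directly by combining Theorem \ref{thm1.5} with the upper estimate $\tilde{f}(e)\le \tilde{g}(e)$ stated in Corollary \ref{gE}. The point is simply that replacing $\tilde{f}(e)$ with a larger function $\tilde{g}(e)$ in the denominator yields a stricter (smaller) radius, so the hypothesis $|\zeta(\alpha,\eta)|<1/\sqrt{\tilde{g}(e)}$ is a sufficient (though weaker) condition for the one already known to work.

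More precisely, first I would recall that by Corollary \ref{gE} we have $\tilde{f}(e)\le \tilde{g}(e)$ for every $e\in[0,1)$, with $\tilde{g}$ defined piecewise on $[0,e_0)\cup[e_0,1)$ by (\ref{tildg}). Since both functions are positive, this gives
\begin{equation}
\frac{1}{\sqrt{\tilde{g}(e)}}\le \frac{1}{\sqrt{\tilde{f}(e)}},\qquad \forall\, e\in[0,1).\nonumber
\end{equation}
Now assume $(\alpha,\eta,e)$ satisfies $|\zeta(\alpha,\eta)|<1/\sqrt{\tilde{g}(e)}$. Then by the displayed inequality we also have $|\zeta(\alpha,\eta)|<1/\sqrt{\tilde{f}(e)}$, which is exactly the hypothesis of Theorem \ref{thm1.5}. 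Invoking that theorem produces $i_{-1}(\gamma_{\alpha,\eta,e})=2$, which is the desired conclusion.

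There is essentially no obstacle in this step itself; all the substance is already packaged in Theorem \ref{thm1.5} (the trace-formula non-degeneracy estimate combined with Lemma \ref{lem4.2.1}) and in Corollary \ref{gE} (the explicit estimation of the integrals $\rho_0,\rho_1,\rho_2,\rho_3$ appearing in $\tilde f(e)$, carried out in Section \ref{sub6.3}). The only thing to check carefully is that the monotone substitution is uniform in $e\in[0,1)$ — i.e., that the piecewise definition of $\tilde{g}(e)$ dominates $\tilde{f}(e)$ on both pieces $[0,e_0)$ and $[e_0,1)$ — but this is precisely the content of Corollary \ref{gE}. No further index computation, spectral-flow argument, or trace identity is required: the proof is a one-line monotonicity deduction from the two earlier results.
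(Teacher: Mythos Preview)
Your proposal is correct and matches the paper's approach exactly: the paper simply states that Corollary~\ref{abg} follows from Theorem~\ref{thm1.5} and Corollary~\ref{gE}, and your monotonicity argument $|\zeta(\alpha,\eta)|<1/\sqrt{\tilde g(e)}\le 1/\sqrt{\tilde f(e)}$ is precisely how those two results combine. There is nothing further to add.
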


\subsection{Stable region of regular $(1+n)$-gon solution}

Now we consider the regular $(1+n)$-gon solution. It is more complicated than Lagrange solution and Euler solution, since the essential part has higher dimension. But luckily, based on its beautiful symmetries, the essential part of the regular $(1+n)$-gon solution can be splitted on several invariant subspaces with lower dimension less than $8$, which is stated in the following lemma. Combined with the stability result of the $(\alpha,\eta)$- type system in Theorem \ref{thm1.5}, we can obtain the estimation of the stable
region of the regular $(1+n)$-gon solution.
\begin{lem}(Theorem 2.3 in \cite{HLO20})
    In the central configuration coordinates, the essential part of the linear Hamiltonian system
for the regular $(1+n)$-gon ERE is given by
\bea  \dot{\gamma}_{\beta,e}(\theta)=J_{4N-8}B_{\beta,e}(\theta)\gamma(\theta)\nonumber \eea
with $ N=1+n$ and
\bea
B(\theta)=\hat{B}_1(\theta) \diamond\cdots\diamond \hat{B}_{[\frac{n}{2}]}(\theta). \lb{bdcom}\nonumber \eea
\end{lem}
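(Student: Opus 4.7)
The approach rests on exploiting the cyclic symmetry of the regular $(1+n)$-gon. The outer $n$ bodies sit at the vertices of a regular $n$-gon centered at the origin, so the configuration $a$ is fixed by the order-$n$ rotation $\rho$ which rigidly rotates the plane by $2\pi/n$ while cyclically permuting the outer bodies. This induces a linear representation $\Phi$ of $\mathbb{Z}_n$ on position space and, by the usual lift, on the full phase space $\mathbb{R}^{4N}$. Because the potential $U$ is invariant under simultaneous rigid rotation of all bodies and the mass matrix $\mathcal{M}$ commutes with $\Phi$, the Hessian $D^2U(a)$ commutes with $\Phi$. Consequently the matrices $\mathcal{D}$ and $\mathcal{R}$ of (\ref{hess}) are $\Phi$-equivariant, and since the $\theta$-dependence in $B(\theta)$ enters only through the scalar factor $1/(1+e\cos\theta)$, it suffices to block-diagonalize $\mathcal{R}$ as a $\Phi$-equivariant symmetric operator.

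The first step is to transport $\Phi$ through the Meyer--Schmidt splitting. The symplectic summands $E_1$ (center-of-mass translation) and $E_2$ (Keplerian scaling and overall rotation) are $\Phi$-invariant by construction, hence the essential subspace $E_3$ is $\Phi$-invariant and inherits a $\mathbb{Z}_n$-action. The second step is to decompose $E_3$ into isotypic components. Over $\mathbb{C}$ the regular representation of $\mathbb{Z}_n$ on the outer bodies splits into one-dimensional characters $\chi_k:\rho\mapsto e^{2\pi ik/n}$ for $k=0,\ldots,n-1$; tensoring with the planar rotation representation and pairing complex-conjugate characters to obtain real irreducibles, one finds that the modes absorbed by $E_1\oplus E_2$ are exactly the translation/scaling/rotation combinations, while the surviving real symplectic summands $V_1,\ldots,V_{[n/2]}$ of $E_3$ are labelled by the remaining Fourier frequencies (with the self-conjugate frequency producing a block of half size when $n$ is even). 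Schur's lemma applied to $\mathcal{R}$ then forces $\mathcal{R}$ to preserve each summand $V_l$, and consequently $B(\theta)=\hat{B}_1(\theta)\diamond\cdots\diamond\hat{B}_{[n/2]}(\theta)$ relative to the symplectic direct sum $E_3=V_1\oplus\cdots\oplus V_{[n/2]}$.

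Finally, the explicit form (\ref{2.13}) of each $\hat{B}_l(\theta)$ comes from computing $\mathbb{A}^{\mathcal{T}}D^2U(a)\mathbb{A}$ on the Fourier mode of frequency $l$, which reduces to evaluating a discrete Fourier-type sum of the shape $\sum_{k=1}^{n-1}\|a_1-a_{1+k}\|^{-3}e^{2\pi ilk/n}$ around the $n$-gon (together with a companion sum involving the central mass). The main obstacle is bookkeeping rather than any conceptual difficulty: one must pin down precisely which Fourier modes are absorbed into $E_1\oplus E_2$ so that the surviving indices are exactly $l=1,\ldots,[n/2]$, handle the self-conjugate case $l=n/2$ separately when $n$ is even, and produce the closed-form entries of each $\hat{B}_l$ from the trigonometric sums above. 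The symplectic-sum structure itself is then automatic from the symplectic orthogonality of distinct isotypic components of a symplectic representation.
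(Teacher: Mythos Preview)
The paper does not itself prove this lemma: it is quoted verbatim as Theorem~2.3 of \cite{HLO20}, and the text immediately following it says ``readers can refer to \cite{HLO20} for the details.'' So there is no proof in the present paper to compare against.

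That said, your outline is exactly the argument used in \cite{HLO20} (and earlier in Moeckel's work \cite{M92} on symmetric relative equilibria): exploit the $\mathbb{Z}_n$-equivariance of $D^2U(a)$ induced by the cyclic symmetry of the $(1+n)$-gon, check that the Meyer--Schmidt reduction subspaces $E_1,E_2$ are invariant so that $E_3$ inherits the action, decompose $E_3$ into real isotypic components indexed by $l=1,\ldots,[n/2]$, and read off the block structure from Schur's lemma; the explicit entries $P_l,Q_l,S_l$ then fall out of discrete Fourier sums over the $n$-gon. Your identification of the bookkeeping issues (which modes are absorbed by $E_1\oplus E_2$, the special self-conjugate block when $n$ is even) is accurate, and these are precisely the points where \cite{HLO20} spends its effort.
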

We list $\hat{B}_l(\theta)$ in the theorem above, and omit the sub-indices of $I$ and $\mathbb{J}$, which are chosen to have the same dimensions as those of $\mathcal{U}(l)$, readers can refer to \cite{HLO20} for the details. The expression of $\hat{B}_i(\theta), 1\leq i\leq{[\frac{n}{2}]}$ is given in (\ref{2.13}), it depends on mass parameter $\beta=\beta_{M}=1/m$, where $m$ is the mass of the body at the center.
\bea\label{2.13}
\hat{B}_{l}(\theta)=\left( \begin{array}{cccc} I & -\mathbb{J}\\
\mathbb{J} & I-r_e(\theta)R_l  \end{array}\right), \quad with \quad R_l=I+\frac{1}{\mu}\mathcal{U}(l), \, l=1,\cdots,[\frac{n}{2}],  \eea
and
\bea\label{2.17}
\mathcal{U}(1)=
\left(\begin{array}{cccc}\frac{n+m}{2}& 0 &\frac{3}{2}\sqrt{m(m+n)}& 0\\
0 & \frac{n+m}{2}& 0 & -\frac{3}{2}\sqrt{m(m+n)}\\ \frac{3}{2}\sqrt{m(m+n)} & 0 & \frac{m}{2}+2P_{1} & 0\\
0 & -\frac{3}{2}\sqrt{m(m+n)} & 0 & \frac{m}{2}+2P_{1}\end{array}\right),
\eea
\bea\label{gon-1}
\mathcal{U}(l)=\left(\begin{array}{cccc}a_{l} & 0 & 0 & S_{l}\\
0 & b_{l} & -S_{l} & 0\\ 0 & -S_{l} & a_{l} & 0\\
S_{l} & 0 & 0 & b_{l}\end{array}\right),\ \ \begin{array}{c} 2\leq l\leq [\frac{n-1}{2}]
\end{array},
\eea
\bea\label{gon-n}
\mathcal{U}(\frac{n}{2})=\left(\begin{array}{cc}
P_{\frac{n}{2}}-3Q_{\frac{n}{2}}+2m & 0 \\ 0 & P_{\frac{n}{2}}+3Q_{\frac{n}{2}}-m\end{array}\right),\ \
       \mathrm{if}\ \ n\in 2\mathbb{N},
\eea
where
\bea\label{n-gon co}
a_{l}=P_{l}-3Q_{l}+2m,\ \ b_{l}=P_{l}+3Q_{l}-m,\ \ \sigma_{n}=\frac{1}{2}\sum_{i=1}^{n-1}\csc \frac{\pi i}{n},\ \  \mu=\frac{1}{2}\sigma_{n}+m,
\eea
\bea
P_{l}=\sum_{j=1}^{n-1}\frac{1-\cos \theta_{jl}\cos \theta_{j}}{2d_{nj}^3},\ \
S_{l}=\sum_{j=1}^{n-1}\frac{\sin \theta_{jl}\sin \theta_{j}}{2d_{nj}^3},\ \
Q_{l}=\sum_{j=1}^{n-1}\frac{\cos \theta_{j}-\cos \theta_{jl}}{2d_{nj}^3},\nonumber
\eea
here $\theta_{jl}=\frac{2\pi jl}{n}$ and $d_{nj}=\|q_n-q_j\|$.

Based on the form of $R_l,\ l=1,\cdots,[\frac{n}{2}]$, in the regular $(1+n)$-gon central configuration, it can be estimated by $R_{\alpha,\eta}$.

For $l=1$,
\bea
R_{\check{\alpha}_{+}(m,n,1),\check{\eta}_{+}(m,n,1)}\oplus R_{\check{\alpha}_{-}(m,n,1),\check{\eta}_{-}(m,n,1)} \leq R_1\leq R_{\hat{\alpha}_{+}(m,n,1),\hat{\eta}_{+}(m,n,1)}\oplus R_{\hat{\alpha}_{-}(m,n,1),\hat{\eta}_{-}(m,n,1)}\nonumber
\eea
with
\bea
\check{\alpha}_{\pm}(m,n,1)=1+\frac{1}{\mu}(\check{d}_n+\frac{m}{2}),\quad \hat{\alpha}_{\pm}(m,n,1)=1+\frac{1}{\mu}(\hat{d}_n+\frac{m}{2}),\nonumber
\eea
\bea
\check{\eta}_+(m,n,1)=\hat{\eta}_+(m,n,1)=\frac{3\sqrt{m(m+n)}}{2\mu},\quad \check{\eta}_-(m,n,1)=\hat{\eta}_-(m,n,1)=-\frac{3\sqrt{m(m+n)}}{2\mu},\nonumber
\eea
and
\bea
\check{d}_n=\min\{2P_1,\frac{n}{2}\},\quad \hat{d}_n=\max\{2P_1,\frac{n}{2}\}.\nonumber
\eea

For $2\leq l\leq [\frac{n-1}{2}]$,
\bea
R_{\check{\alpha}_{+}(m,n,l),\check{\eta}_{+}(m,n,l)}\oplus R_{\check{\alpha}_{-}(m,n,l),\check{\eta}_{-}(m,n,l)} \leq R_l\leq R_{\hat{\alpha}_{+}(m,n,l),\hat{\eta}_{+}(m,n,l)}\oplus R_{\hat{\alpha}_{-}(m,n,l),\hat{\eta}_{-}(m,n,l)},\nonumber
\eea
with
\bea
\check{\alpha}_{\pm}(m,n,l)=1+\frac{1}{2\mu}(a_l+b_l-2S_l),\ \
\hat{\alpha}_{\pm}(m,n,l)=1+\frac{1}{2\mu}(a_l+b_l+2S_l),\nonumber
\eea
\bea
\check{\eta}_{\pm}(m,n,l)=\hat{\eta}_{\pm}(m,n,l)=\frac{1}{2\mu}(a_l-b_l). \nonumber
\eea

For $n\in2\mathbb{N}$, $l=[\frac{n}{2}]$,
\bea
R_{[\frac{n}{2}]}=R_{\alpha(m,n,[\frac{n}{2}]),\eta(m,n,[\frac{n}{2}])},\nonumber
\eea
with
\bea
\alpha(m,n,[\frac{n}{2}])=1+\frac{1}{2\mu}(a_{[\frac{n}{2}]}+b_{[\frac{n}{2}]}),\quad \eta(m,n,[\frac{n}{2}])=\frac{1}{2\mu}(a_{[\frac{n}{2}]}-b_{[\frac{n}{2}]}).\nonumber
\eea
The details of estimates can be found in \cite{HLO20}. By these useful estimates, they gave a region on which $i_1(\gamma_{\beta,e})=0$, we state a part of their results as the following lemma.
\begin{lem}\label{lem5.10}(See Theorem 4.8 in \cite{HLO20})
   If $n\geq 9$, then
   \bea
   \mathcal{A}(\mathcal{R},e)>0\ \ \text{in}\ \ \mathbb{D}_{n-1}(1),\ \ \text{and}\ \ i_1(\gamma_{\beta,e})=0\ \ \text{with}\ \ \beta=1/m,\ \
   \text{for}\ \ \forall (m,e)\in(2Q_{max}(n),\infty)\times [0,1),\nonumber
   \eea
   where $Q_{max}(n)=\max\{Q_l\ |\ 2\leq l\leq [\frac{n}{2}]\}$ and
   $$
   \mathcal{A}=-\frac{d^2}{d\theta^2}I_{2n-2}-2\mathbb{J}_{n-2}\frac{d}{d\theta}+\frac{\mathcal{R}}{1+e\cos\theta}, \ \ \mathcal{R}=R_{1}\oplus R_{2}\oplus\cdots\oplus R_{[\frac{n}{2}]}.
   $$
\end{lem}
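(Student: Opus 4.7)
The plan is to split the statement into its two parts and address them in sequence: first establish the strict positivity $\mathcal{A}(\mathcal{R},e)>0$ on $\mathbb{D}_{n-1}(1)$, then deduce $i_1(\gamma_{\beta,e})=0$ as an immediate corollary via Lemma \ref{4.1} (applied at $\omega=1$, which gives $\phi_1(\mathcal{A})=i_1(\gamma)$ and $\nu_1(\mathcal{A})=\nu_1(\gamma)$). Thus all the real work is in verifying the positivity.

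For the positivity, I would exploit the block-diagonal form $\mathcal{R}=R_1\oplus R_2\oplus\cdots\oplus R_{[n/2]}$, which induces an orthogonal splitting $\mathcal{A}=\bigoplus_{l=1}^{[n/2]}\mathcal{A}_l$ where each $\mathcal{A}_l$ is a self-adjoint Sturm--Liouville operator on a lower-dimensional domain with the same $\omega=1$ periodic boundary condition. It then suffices to show $\mathcal{A}_l>0$ blockwise. For every $l\geq 2$, I would invoke the matrix lower bound
\[
R_l\;\geq\;R_{\check{\alpha}_+(m,n,l),\check{\eta}_+(m,n,l)}\;\oplus\;R_{\check{\alpha}_-(m,n,l),\check{\eta}_-(m,n,l)},
\]
which reduces the analysis to the $(\alpha,\eta)$-type operator of Section 5.3, and feed in the hypothesis $m>2Q_{\max}(n)$: since $b_l=P_l+3Q_l-m$ and $a_l=P_l-3Q_l+2m$, this hypothesis pins down the signs and sizes of $\check{\alpha}_\pm,\check{\eta}_\pm$ in a regime where the associated $(\alpha,\eta)$-type Sturm--Liouville operator is manifestly positive on $\mathbb{D}_2(1)$ uniformly in $e$. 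The case $l=\frac{n}{2}$ (when $n$ is even) is even simpler because $R_{n/2}$ equals a single $R_{\alpha,\eta}$ already.

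The genuinely delicate block is $l=1$, which is a perturbation of the Keplerian operator $\mathcal{A}_{Kep}$. By Lemma \ref{lem4.2.1} combined with Lemma \ref{4.1}, $\mathcal{A}_{Kep}$ is non-negative on $\mathbb{D}_2(1)$ but has a three-dimensional kernel corresponding to the Kepler first integrals (energy, angular momentum, Runge--Lenz). To promote non-negativity to strict positivity, I would use the explicit fundamental solution $\gamma_{Kep}(\theta)$ from Theorem \ref{FunKep} to describe $\ker\mathcal{A}_{Kep}$ explicitly, and then verify that the perturbative term $\frac{1}{\mu}\mathcal{U}(1)/(1+e\cos\theta)$, evaluated against the three kernel directions, is strictly positive when $m>2Q_{\max}(n)$. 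The matrix $\mathcal{U}(1)$ in (\ref{2.17}) has enough structure (positive diagonal blocks and controlled off-diagonal coupling of size $\sqrt{m(m+n)}$) to make this a direct quadratic-form computation.

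The main obstacle will be securing uniformity in $e\in[0,1)$: the weight $1/(1+e\cos\theta)$ is unbounded near $\theta=\pi$ as $e\to 1$, so the positivity bounds on each block must be organized to survive this singularity. For blocks $l\geq 2$ this is handled by the $(\alpha,\eta)$-type reduction, whose positivity is known across the full eccentricity range. For $l=1$, the argument must show that the strict positivity on $\ker\mathcal{A}_{Kep}$ is preserved as $e\to 1$, which is where the explicit expressions coming from $\gamma_{Kep}(\theta)$ are essential. Once the blockwise positivity is in hand, summing gives $\mathcal{A}>0$ on $\mathbb{D}_{n-1}(1)$, and Lemma \ref{4.1} closes the argument with $i_1(\gamma_{\beta,e})=\phi_1(\mathcal{A})=0$.
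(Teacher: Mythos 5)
This lemma is not proved in the paper at all: it is quoted verbatim from Theorem 4.8 of \cite{HLO20}, so there is no internal proof to compare against; I can only judge your sketch on its own terms. Your global strategy is the right one (split $\mathcal{A}$ blockwise along $\mathcal{R}=R_1\oplus\cdots\oplus R_{[n/2]}$, compare each block with the Keplerian operator, then conclude $i_1(\gamma_{\beta,e})=\phi_1(\mathcal{A})=0$ via Lemma \ref{4.1}), but there are two concrete gaps. First, you dispose of the blocks $l\ge 2$ by asserting that the associated $(\alpha,\eta)$-type Sturm--Liouville operator is ``manifestly positive on $\mathbb{D}_2(1)$ uniformly in $e$'' and that this positivity ``is known across the full eccentricity range.'' No such result is available: everything this paper establishes about $(\alpha,\eta)$-type systems (Theorem \ref{thm1.5}, Theorem \ref{thm5.6}, Corollaries \ref{gE} and \ref{abg}) concerns the index $i_{-1}$ on the anti-periodic domain, not positivity on $\mathbb{D}(1)$. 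Worse, positivity there cannot be ``manifest'': since $a_l/\mu\to 2$, $b_l/\mu\to -1$, $S_l/\mu\to 0$ as $m\to\infty$, every block $R_l$ (not only $l=1$) converges, up to an orthogonal change of coordinates, to a direct sum of Kepler matrices $\mathrm{diag}(3,0)$, so each $\mathcal{A}_l$ degenerates to copies of $\mathcal{A}_{Kep}$, which by Lemma \ref{lem4.2.1} and Lemma \ref{4.1} is only positive \emph{semi}-definite on $\mathbb{D}_2(1)$, with a three-dimensional kernel per copy. The blocks $l\ge 2$ are therefore exactly as delicate as $l=1$, and the comparison matrices $R_{\check{\alpha}_\pm,\check{\eta}_\pm}$ you invoke have smallest diagonal entry $\check{\alpha}-|\check{\eta}|\to 0$, far below any threshold that would make positivity automatic.

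Second, even for the block you do treat carefully, checking that the perturbation $\tfrac{1}{\mu}\mathcal{U}(1)/(1+e\cos\theta)$ is positive when tested against the three kernel directions of $\mathcal{A}_{Kep}$ does not yield $\mathcal{A}_1>0$: a perturbation that is positive on $\ker\mathcal{A}_{Kep}$ but indefinite on the whole space can still create negative eigenvalues of the sum, and $\mathcal{U}(1)$ \emph{is} indefinite for large $m$ (its $2\times2$ blocks have determinant $\tfrac{m+n}{4}(4P_1-8m)<0$). What is actually needed is either a matrix inequality of the form $R_l-\Lambda>0$ with $\Lambda$ the Keplerian limit, so that $\mathcal{A}_l$ is a non-negative operator plus the strictly positive operator $\frac{R_l-\Lambda}{1+e\cos\theta}$, or a quantitative spectral-gap argument; and this is precisely where the hypotheses $n\ge 9$ and $m>2Q_{max}(n)$ (equivalently $a_l>b_l$ for all $l$) must enter, through explicit inequalities on $P_l,Q_l,S_l,\sigma_n$ that your sketch never identifies or verifies. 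That these hypotheses are not decorative is visible from the paper's own remark following Lemma \ref{lem5.10}: for $n=7,8$ only a non-explicit $m_0(e)$ is known, so an argument that never locates where $n\ge 9$ is used cannot be complete.
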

But for $-1$-index, they proved existence only, we will calculate a precise region by trace formula, on which, the Sturm-Liouville operator for the regular $(1+n)$-gon with $n\geq 9$ is non-degenerate.

\begin{rem}
    For $n=8$ and $n=7$, in \cite{HLO20} and \cite{OS22} respectively, they proved that there exist some $m_0>0$ such that $\mathcal{A}>0$ on $\mathbb{D}_{n-1}(1)$ when $(m,e)\in(m_0(e),+\infty)\times [0,1)$, but by the technical restriction, they don't know the explicit $m_0(e)$ neither an estimation of $m_0(e)$.
\end{rem}
Now, in order to calculate a precise stable region, we need to use the stable results of the $(\alpha,\eta)$-type system in Corollary \ref{abg} for
\bea
\alpha=\check{\alpha}_{\pm}(m,n,l),\ \eta=\check{\eta}_{\pm}(m,n,l),\ \ \text{when}\ \ l=1,\cdots,[\frac{n-1}{2}], \nonumber
\eea
\bea
\alpha=\alpha(m,n,[\frac{n}{2}]),\ \eta=\eta(m,n,[\frac{n}{2}]),\ \text{when}\ \ n\in2\mathbb{N}, l=[\frac{n}{2}]. \nonumber
\eea
Then we obtain
\bea
\breve{\mathcal{ST}}_{n,l}^{\pm}=\{(m,e)\in[0,\infty)\times[0,1)\ |\ \zeta(\check{\alpha}_{\pm}(m,n,l),\check{\eta}_{\pm}(m,n,l))<\frac{1}{\sqrt{\tilde{g}(e)}}\},\nonumber
\eea
\bea
\hat{\mathcal{ST}}_{n,l}^{\pm}=\{(m,e)\in[0,\infty)\times[0,1)\ |\ \zeta(\hat{\alpha}_{\pm}(m,n,l),\hat{\eta}_{\pm}(m,n,l))<\frac{1}{\sqrt{\tilde{g}(e)}}\},\nonumber
\eea
\bea
\Tilde{\mathcal{ST}}_{n}=\left\{\ \begin{aligned}
    &\{(m,e)\in[0,\infty)\times[0,1)\ |\ \zeta(\alpha_{\pm}(m,n,[\frac{n}{2}]),\eta_{\pm}(m,n,[\frac{n}{2}]))<\frac{1}{\sqrt{\tilde{g}(e)}}\},\quad \text{if}\ \ n\in2\mathbb{N},\\
    &[0,\infty)\times [0,1)],\quad \text{if}\ \ n\in2\mathbb{N}+1.
\end{aligned}\right.\nonumber
\eea

Let $\mathcal{ST}_n=\tilde{\mathcal{ST}}_n\bigcap\limits_{l=1}^{[\frac{n-1}{2}]}(\hat{\mathcal{ST}}_{n,l}^+\bigcap\hat{\mathcal{ST}}_{n,l}^-\bigcap\check{\mathcal{ST}}_{n,l}^+\bigcap\check{\mathcal{ST}}_{n,l}^-)$, we have
\begin{thm}\label{thm5.12}
Let $\gamma_{\beta,e}$ be the foundmental solution of linearized Hamiltonian system of the regular $(1+n)$-gon central configuration with center mass $m$ and eccentricity $e$. For $\beta=1/m$, $(m,e)\in \mathcal{ST}_n$, we have
\bea
i_{-1}(\gamma_{\beta,e})=2N-4,\ \ \text{with}\ \ N=1+n.\nonumber
\eea
\end{thm}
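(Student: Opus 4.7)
The plan is to reduce the computation of $i_{-1}(\gamma_{\beta,e})$ to the $(\alpha,\eta)$-type stability criterion of Corollary \ref{abg} by exploiting the symplectic block decomposition $B(\theta) = \hat{B}_1(\theta) \diamond \cdots \diamond \hat{B}_{[n/2]}(\theta)$. This decomposition lifts to the fundamental solution as $\gamma_{\beta,e}(\theta) = \gamma^{(1)}_{\beta,e}(\theta) \diamond \cdots \diamond \gamma^{([n/2])}_{\beta,e}(\theta)$, where each $\gamma^{(l)}_{\beta,e}$ solves $\dot{\gamma} = J\hat{B}_l \gamma$ with $\gamma(0)=I$. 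By additivity of the Maslov-type index under symplectic direct sum, $i_{-1}(\gamma_{\beta,e}) = \sum_{l=1}^{[n/2]} i_{-1}(\gamma^{(l)}_{\beta,e})$, so it suffices to compute each block's index and show they sum to $2N-4$.

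For each $l$ with $1 \leq l \leq [(n-1)/2]$, the $8$-dimensional block $\hat{B}_l$ has potential matrix $R_l = I + \mathcal{U}(l)/\mu$, which after the symplectic permutation that block-diagonalises $\mathcal{U}(l)$ is sandwiched by
\begin{equation*}
R_{\check\alpha_+,\check\eta_+} \oplus R_{\check\alpha_-,\check\eta_-} \;\leq\; R_l \;\leq\; R_{\hat\alpha_+,\hat\eta_+} \oplus R_{\hat\alpha_-,\hat\eta_-}.
\end{equation*}
Lemma \ref{4.1} identifies the $-1$-Maslov-type index of each system with the $-1$-Morse index of the corresponding Sturm-Liouville operator $\mathcal{A} = -\partial_\theta^2 I - 2\mathbb{J}\partial_\theta + \mathcal{R}/(1+e\cos\theta)$. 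Since adding a non-negative symmetric matrix to $\mathcal{R}$ perturbs $\mathcal{A}$ by a non-negative operator and can only decrease the count of negative eigenvalues, this yields the monotone pinch
\begin{equation*}
i_{-1}(\text{upper-bound system}) \;\leq\; i_{-1}(\gamma^{(l)}_{\beta,e}) \;\leq\; i_{-1}(\text{lower-bound system}).
\end{equation*}

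When $(m,e) \in \mathcal{ST}_n$, each of the four $(\alpha,\eta)$ parameter pairs $(\check\alpha_\pm,\check\eta_\pm)$ and $(\hat\alpha_\pm,\hat\eta_\pm)$ satisfies $|\zeta(\alpha,\eta)| < 1/\sqrt{\tilde g(e)}$ by definition of the sets $\check{\mathcal{ST}}_{n,l}^\pm$ and $\hat{\mathcal{ST}}_{n,l}^\pm$. Corollary \ref{abg} then yields $i_{-1}=2$ for every such $(\alpha,\eta)$-subsystem, so both the upper and lower bounds for $\hat{B}_l$ carry index $4$; the monotone sandwich forces $i_{-1}(\gamma^{(l)}_{\beta,e}) = 4$ for $1 \leq l \leq [(n-1)/2]$. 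When $n$ is even, the last block $\hat{B}_{n/2}$ is itself a $4$-dimensional $(\alpha,\eta)$-type system with parameters lying in $\tilde{\mathcal{ST}}_n$, so Corollary \ref{abg} directly gives $i_{-1}(\gamma^{(n/2)}_{\beta,e}) = 2$. Summing the per-block contributions, $i_{-1}(\gamma_{\beta,e}) = 4 \cdot [(n-1)/2] + 2 \cdot \mathbf{1}_{n \in 2\mathbb{N}} = 2n-2 = 2N-4$, as claimed.

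The main technical obstacle is making the monotonicity step fully rigorous: transferring the ordering of Sturm-Liouville operators to an ordering of Maslov-type indices requires control of the $-1$-nullity along the interpolating family from the lower to the upper bound. This should be handled by Proposition \ref{prop3.2}(2)-(3) applied to the one-parameter family of operators, since both endpoints are $-1$-non-degenerate under the hypothesis $(m,e) \in \mathcal{ST}_n$ (the trace inequality $Tr(\mathcal{F}^2)<1$ underlying Corollary \ref{abg} strictly separates the spectrum from zero), so the relative Morse index along the homotopy is well-defined and vanishes when the endpoints share the same Morse index. A secondary issue is checking that the hypotheses $\alpha \geq 1$ and the sign conventions on $\eta$ in Theorem \ref{thm1.5} are satisfied for all four comparison systems, which follows from the explicit formulas for the $\check\alpha_\pm, \check\eta_\pm, \hat\alpha_\pm, \hat\eta_\pm$ together with the positivity of $\mu$, $\sqrt{m(m+n)}$, and the geometric sums $P_l$.
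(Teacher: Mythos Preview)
Your proposal is correct and follows essentially the same route as the paper: decompose $\gamma_{\beta,e}$ into the symplectic sum of the block fundamental solutions, invoke Corollary \ref{abg} on the $(\alpha,\eta)$-type bounding systems for each block, and sum the resulting indices to $2n-2=2N-4$. You are in fact more careful than the paper, which simply asserts that each block contributes the right index; your explicit monotone sandwich via Lemma \ref{4.1} and Proposition \ref{prop3.2}(3) is the correct justification, and the ``technical obstacle'' you flag about nullity along the interpolation is unnecessary, since the inequality $\phi_{-1}(\mathcal{A}_{\text{upper}})\leq\phi_{-1}(\mathcal{A}_l)\leq\phi_{-1}(\mathcal{A}_{\text{lower}})$ follows directly from the operator ordering and min--max, with both ends equal to $4$.
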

\begin{proof}
    For $(m,e)\in\mathcal{ST}_n$, the foundamental solution for every $\mathcal{B}_l(\theta)$ has $-1$-Maslov-type index $2$, by symplectic addition of Maslov-type index, we
have
\bea
i_{-1}(\gamma_{\beta,e})=\left\{\ \begin{aligned}
    &4[\frac{n-1}{2}],\quad n=2\mathbb{N}+1,\\
    &4[\frac{n-1}{2}]+2,\quad n=2\mathbb{N}.
\end{aligned}\right. \nonumber
\eea
Since $N=1+n$, one can check that, we always have $i_{-1}(\gamma_{\beta,e})=2N-4$, the conclusion is proved.
\end{proof}

Denote by
\bea\label{EE1+n}
\mathcal{EE}_{(1+n)}=\mathcal{ST}_n\cap(2Q_{max}(n),\infty)\times [0,1)),\quad n\geq 9,
\eea
further we can prove Theorem \ref{thm1.4}.
\begin{proof}[Proof of Theorem \ref{thm1.4}]
    For $n\geq9$, by Lemma \ref{lem5.10} and Theorem \ref{thm5.12}, for $(m,e)\in\mathcal{EE}_{(1+n)}$, we have
    \bea
i_1(\gamma_{\beta,e})=0,\ \ i_{-1}(\gamma_{\beta,e})=2N-4.\nonumber
    \eea
Theorem \ref{thm3.3} implies $\gamma_{\beta,e}(2\pi)$ is linear stability.
\end{proof}

\begin{rem}
    $\mathcal{EE}_{(1+9)}$, a part of EE region of $(1+9)$-gon central configuration, is shown in Figure \ref{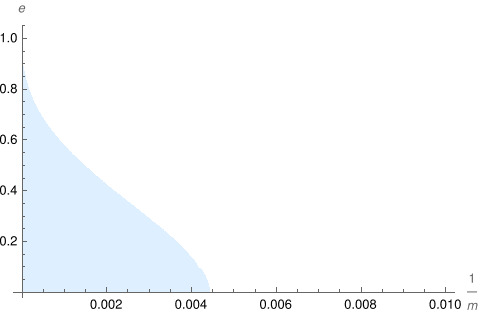}, the details of calculation are listed in Section 6.
    \begin{figure}[h]
    \centering
    \includegraphics[width=9cm]{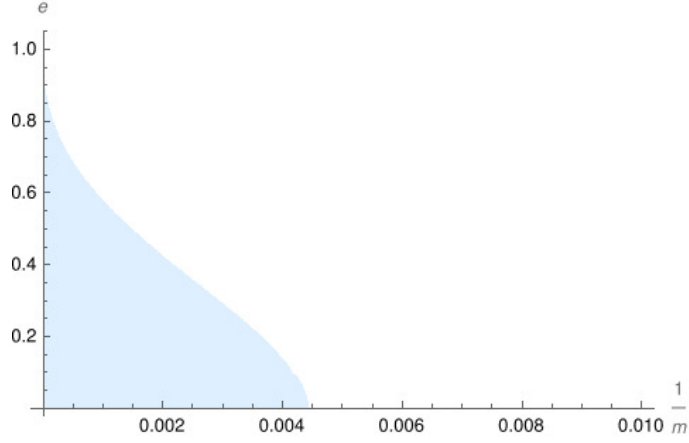}
    \caption{The curve intersects $\beta=1/m$ axis at about $(0,\ 0.00445)$. Here we choose $e_0=0.1$, then the two discontinuity points are $(0.00418,\ 0.1)$ and $(0.00422,\ 0.1)$. }
    \label{1+9.jpg}
\end{figure}
\end{rem}

\section{Appendix}\label{App}

\subsection{The explicit expression for $f_{L,\pm}(e)$}\label{sub6.1}
With the help of software Mathematica, we have
\bea
f_{L,+}(e)=a_{L,0}^+(e)+a_{L,1}^+(e)\rho_1(e)+a_{L,2}^+(e)\rho_2(e)+\int_0^{\pi}\int_0^{\theta}h_{L,+}(e,s,\theta)\rho_0^2(e,s)ds d\theta,\nonumber
\eea
and
\bea
f_{L,-}(e)=a_{L,0}^-(e)+a_{L,1}^-(e)\ \rho_1(e)+a_{L,2}^-(e)\ \rho_2(e)+a_{L,3}^-(e)\ \rho_3(e)+\int_0^{\pi}h_{L,-}(e,\theta)\ \rho_0^2(e,\theta)\  d\theta,\nonumber
\eea
where
\bea
\begin{aligned}
a_{L,0}^+(e)=&\frac{1}{36 e^4 \left(e^2-1\right)^3}\left(-382 e^7-54 \pi ^2 e^6+532 e^5+90 \pi ^2 e^4+120 e^3-54 \pi ^2 e^2+18 \pi ^2\right.\\
&\left.+18 \pi ^2 (e-1)^3 (e+1)^2 \left(e^2+1\right) \sqrt{(1+e)/(1-e)}-\left(45 e^8+249 e^6-300 e^4+6 e^2\right)\ln\left((1+e)/(1-e)\right)\right),
\end{aligned}\nonumber
\eea
\bea
\begin{aligned}
a_{L,0}^-(e)=&\frac{1}{36 e^4 \left(1-e^2\right)^5}\left(-382 e^{11}+72 \pi ^2 e^{10}+1296 e^9+81 \pi ^4 e^8-36 \pi ^2 e^8-1326 e^7+162 \pi ^4 e^6+558 \pi ^2 e^6\right.\\
&+292 e^5+81 \pi ^4 e^4-666 \pi ^2 e^4+120 e^3+90 \pi ^2 e^2+216 \pi ^2 \left(e^4-1\right) \sqrt{1-e^2} e^4 \log (e+1)\\
&+\pi ^2 \left(216 e^4-216 e^8\right) \sqrt{1-e^2} \log \left(\sqrt{1-e^2}/2+1/2\right)\\
&-3 \left(e^2-1\right) \left(15 e^8+68 e^6+6 \left(12 \pi ^2 \sqrt{1-e^2}+17\right) e^2+3 \left(24 \pi ^2 \sqrt{1-e^2}-61\right) e^4-2\right) e^2 \log \left((1+e)/(1-e)\right)\\
&\left.+\pi ^2 \left(126 e^{10}-108 e^9-180 e^8+648 e^7+486 e^6+756 e^5+954 e^4-108 e^2+18\right) \sqrt{1-e^2}-18 \pi ^2\right),\nonumber
\end{aligned}
\eea
\bea
a_{L,1}^+(e)=\frac{16-e^4}{4 \left(1-e^2\right)^2}, \quad a_{L,2}^+(e)=\frac{7 e^4+12 e^2-3}{2 e^2 \left(1-e^2\right)^2},\ \
a_{L,2}^-(e)=\frac{-7 e^4-12 e^2+3}{2 e^2 \left(1-e^2\right)^2},\quad a_{L,3}^-(e)=\frac{9 \pi  \left(e^2+1\right)^2}{\left(1-e^2\right)^{7/2}},\nonumber
\eea
\bea
a_{L,1}^-(e)=\frac{e^8-2 e^6-8 \left(9 \pi ^2 \sqrt{1-e^2}-4\right) e^2-4 \left(9 \pi ^2 \sqrt{1-e^2}+4\right)-3 \left(12 \pi ^2 \sqrt{1-e^2}+5\right) e^4}{4 \left(1-e^2\right)^4},\nonumber
\eea
\bea
h_{L,+}(e,s,\theta)=\frac{9 \left(e^3 \cos (3 s)+4 e^2 \cos (2 s)+\left(e^2+6\right) e \cos (s)+2 e^2+2\right) (e \cos (t)+1) \left(e^2 \cos (2 t)+2 e \cos (t)+1\right)}{4 \left(e^2-1\right)^2},\nonumber
\eea
\bea
\begin{aligned}
h_{L,-}(e,\theta)=
&\frac{1}{4 \left(e^2-1\right)^2}\left(3 \left(e^3 \cos (3 \theta)+4 e^2 \cos (2 \theta)+\left(e^2+6\right) e \cos (\theta)+2 e^2+2\right)\right)\\
&\cdot \left(3 \left(e^2+1\right) \theta+e \sin (\theta) \left(e^2 \cos (2 \theta)+2 e^2+6 e \cos (\theta)+9\right)\right).\nonumber
\end{aligned}
\eea
%
\subsection{The explicit expression for $g_{L,\pm}(e)$}\label{sub6.2}
In this subsection, we will show the detailed computation of $g_{L,\pm}(e)$.
We first give an estimation of (\ref{rho}), which is useful.
By calculation, we have:
\bea\label{est0}
    \frac{\theta}{(1-e^2)(1+e)}-\frac{e}{(1-e^2)(1-e)}\ \leq\  \rho_0(e,\theta)\ \leq\  \frac{\pi \theta \sqrt{(1-e)/(1+e)}}{\pi-e \theta}\ \leq\  \frac{\theta}{(1-e^2)^{\frac{3}{2}}},\nonumber
\eea
\bea\label{est1}
    \frac{\pi^2}{2(1-e)(e+1)^2}-\frac{\log \left((1+e)/(1-e)\right)}{1-e^2}\ \leq\  \rho_1(e)\ \leq\  \frac{\pi^2}{2(1-e^2)^{\frac{3}{2}}},\nonumber
\eea
\bea\label{est2}
    \frac{\pi^2}{2(1+e)}\ \leq\  \pi^2/2-2 e\ \leq\  \rho_2(e)\ \leq\  \left\{\begin{aligned}
        &\pi^2/2-2 e+\pi^2 e^2/4,\  e\in[0,e_0) \\
        &\frac{-\pi^2(e+\ln(1-e))}{e^2(1+e)},\ e\in[e_0,1)
    \end{aligned}\right. \leq \frac{-\pi^2(e+\ln(1-e))}{e^2(1+e)},\nonumber
\eea
\bea\label{est3}
    \rho_3(e)\ \leq\  \frac{\pi \log \left(\sqrt{1-e^2}/2+1/2\right)-\log (e+1)}{1-e^2}+\frac{\pi^3 \sqrt{(1-e)/(1+e)}((2-e) e+2 \times(1-e) \log (1-e))}{2 e^3\left(1-e^2\right)^{3 / 2}}.\nonumber
\eea
With these estimations, we obtain
$g_{L,+}(e)$ in (\ref{eqglag+}) by estimating (\ref{eqflag+}) and segmented function $g_{L,-}(e)$ in (\ref{eqglag}) by estimating (\ref{eqflag-}), which satisfy
\bea
f_{L,\pm}(e)\leq g_{L,\pm}(e). \nonumber
\eea
\bea\label{eqglag+}
\begin{aligned}
g_{L,+}(e)=&\frac{\left(5 e^4+3\right) \left(4 e-\pi ^2\right)}{4 e^2 \left(e^2-1\right)^2}-\frac{6 \pi ^2 \left(e^2+1\right) (e+\log(1-e) )}{(e-1)^2 e^2 (e+1)^3}-\frac{\pi ^2 \left(e^4-16\right)}{8 \left(1-e^2\right)^{7/2}}\\
&-\frac{3 \left(15 e^6+83 e^4-100 e^2+2\right) e^2 \log((1-e)/(1+e))}{36 e^4 \left(e^2-1\right)^3}\\
&+\frac{1}{48 \left(e^2-1\right)^5}\left[27 \left(3 \pi ^2-16\right) e^6+\left(-112+441 \pi ^2-18 \pi ^4\right) e^5-27 \left(48+3 \pi ^2+4 \pi ^4\right) e^4\right.\\
&\left.+\left(7536+972 \pi ^2-9 \pi ^4\right) e^3-90 \pi ^2 \left(6+\pi ^2\right) e^2+5184 e-18 \pi ^4\right]\\
&+\frac{1}{18 e^4 \left(e^2-1\right)^3}\left[-191 e^7-27 \pi ^2 e^6+266 e^5+45 \pi ^2 e^4+60 e^3-27 \pi ^2 e^2\right.\\
&\left.+9 \pi ^2 (e-1)^3 (e+1)^2 \left(e^2+1\right) \sqrt{(1-e)/(1+e)}+9 \pi ^2\right].
\end{aligned}
\eea
and
\bea\label{eqglag}
g_{Lag,-}(e)=\left\{\begin{aligned}
    &\check{g}_{Lag,-}(e),\quad e\in[0,e_0),\\
    &\hat{g}_{Lag,-}(e),\quad e\in[e_0,1).
\end{aligned}\right.\ \ \text{with}\ \ e_0\in (0,\frac{224}{27\pi^2}],
\eea
and
\bea \label{eqglag1}
\begin{aligned}
\check{g}_{L,-}(e)=&\frac{1}{72 e^4 \left(1-e^2\right)^5}\left[-45 \pi ^2 e^{14}+360 e^{13}+45 \pi ^2 e^{12}+225 \pi ^2 e^{11}-1844 e^{11}+27 \pi ^2 e^{10}-225 \pi ^2 e^9\right.\\
&+3888 e^9+162 \pi ^4 e^8-324 \pi ^3 e^8-45 \pi ^2 e^8-162 \pi ^4 e^7-648 \pi ^3 e^7-360 \pi ^2 e^7-3660 e^7-486 \pi ^4 e^6\\
&+1647 \pi ^2 e^6-324 \pi ^4 e^5+2592 \pi ^3 e^5+360 \pi ^2 e^5+1232 e^5+810 \pi ^4 e^4+4212 \pi ^3 e^4-1827 \pi ^2 e^4\\
&+810 \pi ^4 e^3+1944 \pi ^3 e^3+24 e^3+1458 \pi ^4 e^2+234 \pi ^2 e^2+972 \pi ^4 e-36 \pi ^2\\
&+216 \pi  \left(e^2+1\right) \left((2 \pi -3) e^2-2 \pi -3\right) \sqrt{1-e^2} e^4 \log (e+1)\\
&+\left(216 e^8+1296 e^6+1080 e^4\right) \left(\pi ^2 \sqrt{1-e^2} \log \left((1+e)/(1-e)\right)+\pi ^2 \sqrt{1-e^2} \log \left( \sqrt{1-e^2}/2+1/2\right)\right)\\
&-324 \pi ^3 (e-1) \left(2 e^6+8 e^5+3 \pi  e^4+16 e^4+6 \pi  e^3+16 e^3+6 \pi  e^2+6 e^2+6 \pi  e+3 \pi \right) \log (1-e)\\
&-648 \pi ^3 (e-1) \pi  \left(e^2+1\right)^2 \sqrt{(1+e)/(1-e)} e\log (1-e)\\
&+\left(-108 e^{12}-282 e^{10}+1776 e^8-2286 e^6+912 e^4-12 e^2\right) \log \left((1+e)/(1-e)\right)\\
&+\pi ^4 \left(-324 e^8-324 e^7-648 e^5+972 e^4-324 e^3+648 e^2\right) \sqrt{(1+e)/(1-e)}\\
&\left.+\pi ^2 \left(252 e^{10}-216 e^9-360 e^8+1296 e^7+972 e^6+1512 e^5+1908 e^4-216 e^2+36\right) \sqrt{1-e^2}\right],
\end{aligned}\nonumber
\eea
\bea \label{eqglag2}
\begin{aligned}
\hat{g}_{L,-}(e)=&\frac{1}{72 e^4 \left(1-e^2\right)^5}\left(225 \pi ^2 e^{11}-764 e^{11}+99 \pi ^2 e^{10}-405 \pi ^2 e^9+2592 e^9+162 \pi ^4 e^8-324 \pi ^3 e^8\right.\\
&-207 \pi ^2 e^8-162 \pi ^4 e^7-648 \pi ^3 e^7-2652 e^7-486 \pi ^4 e^6+1764 \pi ^2 e^6-324 \pi ^4 e^5+2592 \pi ^3 e^5+72 \pi ^2 e^5\\
&+584 e^5+810 \pi ^4 e^4+4212 \pi ^3 e^4-1908 \pi ^2 e^4+810 \pi ^4 e^3+1944 \pi ^3 e^3+216 \pi ^2 e^3+240 e^3+1458 \pi ^4 e^2\\
&+288 \pi ^2 e^2+972 \pi ^4 e-108 \pi ^2 e-36 \pi ^2\\
&-648 \pi ^4 (e-1) \sqrt{(1-e)/(e+1)} \left(e^2+1\right)^2 e \log(1-e)\\
&-36 \pi ^2 (e-1) \left(-5 e^8+18 \pi  e^6+10 e^6+72 \pi  e^5+27 \pi ^2 e^4+144 \pi  e^4-8 e^4+54 \pi ^2 e^3+144 \pi  e^3+54 \pi ^2 e^2\right.\\
&\left.+54 \pi  e^2+6 e^2+54 \pi ^2 e+27 \pi ^2-3\right)\log(1-e)\\
&+216 \pi  \left((2 \pi -3) e^2-2 \pi -3\right) \left(e^2+1\right) \sqrt{1-e^2} e^4 \log(e+1) \\
&+\left(216 e^8+1296 e^6+1080 e^4\right)\pi ^2 \sqrt{1-e^2}\left(\log((1+e)/(1-e))+\log(\sqrt{1-e^2}/2+1/2)\right)\\
&+\left(-108 e^{12}-282 e^{10}+1776 e^8-2286 e^6+912 e^4-12 e^2\right)\log((1+e)/(1-e))\\
&+\pi ^4 \left(-324 e^8-324 e^7-648 e^5+972 e^4-324 e^3+648 e^2\right) \sqrt{(1+e)/(1-e)}\\
&\left.+\pi ^2 \left(252 e^{10}-216 e^9-360 e^8+1296 e^7+972 e^6+1512 e^5+1908 e^4-216 e^2+36\right) \sqrt{1-e^2}\right].
\end{aligned}\nonumber
\eea

\subsection{The explicit expression for $\tilde{f}(e)$ and $\tilde{g}(e)$}\label{sub6.3}
With the help of software Mathematica, we have
\bea
\tilde{f}(e)=\tilde{a}_0(e)+\tilde{a}_1(e)\ \rho_1(e)+\tilde{a}_2(e)\ \rho_2(e)+\tilde{a}_3(e)\ \rho_3(e)+\int_0^{\pi}\tilde{h}(e,\theta)\ \rho_0^2(e,\theta)\ d\theta,\nonumber
\eea
where
\bea
\begin{aligned}
\tilde{a}_0(e)=&\frac{1}{e^4 \left(1-e^2\right)^5}\left[140 e^{11}+29 \pi ^2 e^{10}+6 e^9+36 \pi ^4 e^8+17 \pi ^2 e^8-408 e^7+36 \pi ^4 e^6+47 \pi ^2 e^6+238 e^5\right.\\
&+9 \pi ^4 e^4-101 \pi ^2 e^4+24 e^3+10 \pi ^2 e^2-2 \pi ^2-36 \pi ^2 \sqrt{1-e^2} \left(-2 e^4+e^2+1\right) e^4 \log(e+1)\\
&+\pi ^2 \left(-72 e^8+36 e^6+36 e^4\right) \sqrt{1-e^2}\log\left(\sqrt{1-e^2}/2+1/2\right)\\
&+\left(e^2-1\right) \left(6 e^8-47 e^6-3 \left(12 \pi ^2 \sqrt{1-e^2}+13\right) e^2+\left(78-72 \pi ^2 \sqrt{1-e^2}\right) e^4+2\right) e^2\log\left((1+e)/(1-e)\right)\\
&\left.+\pi ^2 \left(-26 e^{10}+72 e^9+20 e^8+180 e^7+172 e^6+72 e^5+170 e^4-14 e^2+2\right) \sqrt{1-e^2}\right],
\end{aligned}\nonumber
\eea
\bea
\tilde{a}_1(e)=-\frac{3 \left(4 e^4+43 e^2+28\right)}{\left(1-e^2\right)^2}-\frac{36 \left(2 \pi  e^2+\pi \right)^2}{\left(1-e^2\right)^{7/2}},\ \
\tilde{a}_2(e)=\frac{2 \left(18 e^4+e^2+5\right)}{e^2 \left(1-e^2\right)^2},\quad  \tilde{a}_3(e)=\frac{36 \pi  \left(2 e^2+1\right)^2}{\left(1-e^2\right)^{7/2}},\nonumber
\eea
\bea
\tilde{h}(e,\theta)=\frac{18 (e \cos (\theta )+1) \left(e^2+2 e \cos (\theta )+1\right) \left(2 e^2 \theta +e \sin (\theta ) \left(e^2+e \cos (\theta )+3\right)+\theta \right)}{\left(1-e^2\right)^2}.\nonumber
\eea
and
 \bea
\tilde{f}(e)\leq\tilde{g}(e)=\left\{\begin{aligned}
    &\check{\tilde{g}}(e),\quad e\in[0,e_0),\\
    &\hat{\tilde{g}}(e),\quad e\in[e_0,1),
\end{aligned}\right.\nonumber
\eea
\bea \label{tildg1}
\begin{aligned}
\check{\tilde{g}}(e)&=\frac{1}{e^4(1-e^2)^5}\left[\sqrt{1-e^2} \left(-\pi ^2 \left(-72 e^8+36 e^6+36 e^4\right) \log (1-e)\right.\right.\\
&-\pi  \left(144 e^8+144 e^6+36 e^4\right) \log (e+1)
+\pi ^2 \left(72 e^8+180 e^6+72 e^4\right) \log \left(\sqrt{1-e^2}/2+1/2\right)\\
&\left.+\pi ^2 \left(-26 e^{10}+72 e^9+20 e^8+180 e^7+172 e^6+72 e^5+170 e^4-14 e^2+2\right)\right)\\
&-9 \pi ^2 e^{14}+72 e^{13}+17 \pi ^2 e^{12}/2-72 e^{11}+99 \pi ^2 e^{10}/2-72 \pi ^2 e^9+534 e^9+54 \pi ^4 e^8-273 \pi ^2 e^8/8\\
&-252 \pi ^2 e^7+528 e^7+54 \pi ^4 e^6+77 \pi ^2 e^6/2-108 \pi ^2 e^5+726 e^5+27 \pi ^4 e^4/2-225 \pi ^2 e^4/2+4 e^3+15 \pi ^2 e^2\\
&+\left(e^2-1\right)^3 \left(6 e^4-35 e^2+2\right) e^2 \log (e+1)+\pi ^4 \left(-72 e^7+144 e^6-72 e^5+144 e^4-18 e^3+36 e^2\right) \sqrt{(1-e)/(e+1)}\\
&-(e-1) \left(6 e^{10}+6 e^9-47 e^8-47 e^7+78 e^6+78 e^5+3 \left(48 \pi ^4 \sqrt{(1-e)/(e+1)}-13\right) e^4-39 e^3\right.\\
&\left.\left.+2 \left(72 \pi ^4 \sqrt{(1-e)/(e+1)}+1\right) e^2+2 e+36 \pi ^4 \sqrt{(1-e)/(e+1)}\right) e \log (1-e)-2 \pi ^2\right],
\end{aligned}
\eea

\bea \label{tildg2}
\begin{aligned}
\hat{\tilde{g}}(e)&=\frac{1}{e^4(1-e^2)^5\sqrt{1-e^2}}\left[\sqrt{1-e^2} \left(140 e^{11}+121 \pi ^2 e^{10}/2-108 \pi ^2 e^9+310 e^9+54 \pi ^4 e^8-505 \pi ^2 e^8/8\right.\right.\\
&-182 \pi ^2 e^7+648 e^7+54 \pi ^4 e^6+115 \pi ^2 e^6/2-150 \pi ^2 e^5+670 e^5+27 \pi ^4 e^4/2-119 \pi ^2 e^4\\
&-10 \pi ^2e-2\pi^2+18 \pi ^2 e^3+24 e^3+20 \pi ^2 e^2+\left(e^2-1\right)^3 \left(6 e^4-35 e^2+2\right) e^2 \log (e+1)\\
&\left.-(e-1)^3 (e+1)^2 \left(6 e^7+6 e^6-35 e^5-\left(35+36 \pi ^2\right) e^4+2 e^3-2 \left(\pi ^2-1\right) e^2-10 \pi ^2\right) \log (1-e)\right)\\
&+2 \pi  \left(18 \left(4 e^6-3 e^2-1\right) e^4 \log (e+1)-18 \pi  (e-1)^2 \left(2 e^2+1\right) \left(e^5+2 e^4+e^3-2 \pi ^2 e^2-\pi ^2\right) e \log (1-e)\right.\\
&+\pi  \left(13 e^{12}-36 e^{11}-23 e^{10}-54 e^9+4 \left(9 \pi ^2-19\right) e^8-54 \left(2 \pi ^2-1\right) e^7+\left(1+108 \pi ^2\right) e^6\right.\\
&\left.\left.\left.-36 \left(3 \pi ^2-1\right) e^5+\left(92+81 \pi ^2\right) e^4-27 \pi ^2 e^3+2 \left(9 \pi ^2-4\right) e^2\right.\right.\right.\\
&\left.\left.\left.-18 \left(2 e^6+3 e^4-3 e^2-2\right) e^4 \log \left(\sqrt{1-e^2}/2+1/2\right)+1\right)\right)\right].
\end{aligned}
\eea

\subsection{Computation of region $\mathcal{EE}_{(1+9)}$}
For $n=9$, we have
\bea
\begin{aligned}
\check{\alpha}_{\pm}(m,9,1)=&\left[18 m+3/128\csc^3(\pi/9)\sec^3(\pi/9)\sec^3(\pi/18)\left(21\sqrt{3}+5\sin(\pi/9)+23\sin(2\pi/9)+3\sqrt{3}\sin(\pi/8)\right.\right.\\
&\left.\left.+55\cos(\pi/18)+6\sqrt{3}\cos(2\pi/9)\right)\right]\big{/}\left(12 m+4\sqrt{3}+6\csc(\pi/9)+6\csc(12\pi/9)+6\csc(\pi/18)\right),
\end{aligned}\nonumber
\eea
\bea
\begin{aligned}
    \check{\alpha}_{\pm}(m,9,2)=&9 \csc ^3\left(\pi/9\right) \sec ^3\left(\pi/18\right) \sec ^3\left(\pi/9\right)
    \left(3 m\left(1+\sin \left(\pi/18\right)+2 \cos \left(2 \pi/9\right)\right)+4 \sin \left(2 \pi/9\right)
    +\sqrt{3} \sin \left(\pi/18\right)\right.\\
    &\left.+5 \sqrt{3}+13 \cos \left(\pi/18\right)+2 \sqrt{3} \cos \left(2 \pi/9\right)\right)\big{/}\left(256(6 m+2 \sqrt{3}+3 \csc (\pi/9)+3 \csc (2 \pi/9)+3 \sec (\pi/18))\right),
    \end{aligned}\nonumber
    \eea
\bea
\hat{\alpha}_{\pm}(m,9,1)=1+3(m+9)/(6 m+2\sqrt{3}+3\csc(\pi/9)+3\csc(2\pi/9)+3\sec(\pi/18)),\nonumber
\eea
    \bea
    \begin{aligned}
        \hat{\alpha}_{\pm}(m,9,2)=&\frac{1}{256\left(6 m+2 \sqrt{3}+3 \csc \left(\pi/9\right)+3 \csc \left(2 \pi/9\right)+3 \sec \left(\pi/18\right)\right)}\left[3 \csc ^3\left(\pi/9\right) \sec ^3\left(\pi/18\right) \sec ^3\left(\pi/9\right)\right.\\
        &\left(9 m\left(1+\sin \left(\pi/18\right)+2 \cos \left(2 \pi/9\right)\right)+25 \sqrt{3}+12 \sin \left(\pi/9\right)+38 \sin \left(2 \pi/9\right)+2 \sqrt{3} \sin \left(\pi/18\right)\right.\\
        &+68 \cos \left(\pi/18\right)\left.\left.+4 \sqrt{3} \cos \left(2 \pi/9\right)\right)\right],
        \end{aligned}\nonumber
        \eea
\bea
\begin{aligned}
    \check{\alpha}_{\pm}(m,9,3)=&\frac{1}{512\left(6 m+2 \sqrt{3}+3 \csc \left(\pi/9\right)+3 \csc \left(2\pi/9\right)+3 \sec \left(\pi/18\right)\right)}\left[3 \csc ^3\left(\pi/9\right) \sec ^3\left(\pi/18\right) \sec ^3\left(\pi/9\right)\right.\\
    &\left(18 m\left(1+\sin \left(\pi/18\right)+2 \cos \left(2 \pi/9\right)\right)+42 \sqrt{3}+18 \sin \left(\pi/9\right)+66 \sin \left(2 \pi/9\right)+\sqrt{3} \sin \left(\pi/18\right)\right.\\
    &\left.\left.+129 \cos \left(\pi/18\right)-12 \sqrt{3} \cos \left(\pi/9\right)+2 \sqrt{3} \cos \left(2 \pi/9\right)\right)\right],
    \end{aligned}\nonumber
    \eea
\bea
\begin{aligned}
        \hat{\alpha}_{\pm}(m,9,3)=&\frac{1}{512\left(6 m+2 \sqrt{3}+3 \csc \left(\pi/9\right)+3 \csc \left(2 \pi/9\right)+3 \sec \left(\pi/18\right)\right)}\left[3 \csc ^3\left(\pi/9\right) \sec ^3\left(\pi/18\right) \sec ^3\left(\pi/9\right)\right.\\
        &\left(18 m\left(1+\sin \left(\pi/18\right)+2 \cos \left(2 \pi/9\right)\right)+54 \sqrt{3}+18 \sin \left(\pi/9\right)+66 \sin \left(2 \pi/9\right)+11 \sqrt{3} \sin \left(\pi/18\right)\right.\\
        &\left.\left.+129 \cos \left(\pi/18\right)+12 \sqrt{3} \cos \left(\pi/9\right)+22 \sqrt{3} \cos \left(2 \pi/9\right)\right)\right],
        \end{aligned}\nonumber
\eea
\bea
\begin{aligned}
    \check{\alpha}_{\pm}(m,9,4)=&\frac{1}{2048\left(6 m+2 \sqrt{3}+3 \csc \left(\pi/9\right)+3 \csc \left(2 \pi/9\right)+3 \sec \left(\pi/18\right)\right)}\left[3 \csc ^3\left(\pi/18\right) \sec ^6\left(\pi/18\right) \sec ^3\left(\pi/9\right)\right.\\
    &\left(9 m\left(1+\sin \left(\pi/18\right)+2 \cos \left(2 \pi/9\right)\right)+25 \sqrt{3}+12 \sin \left(\pi/9\right)+38 \sin \left(2 \pi/9\right)+2 \sqrt{3} \sin \left(\pi/18\right)\right.\\
    &\left.\left.+68 \cos \left(\pi/18\right)+4 \sqrt{3} \cos \left(2 \pi/9\right)\right)\right],
    \end{aligned}\nonumber
\eea
\bea
\begin{aligned}
    \hat{\alpha}_{\pm}(m,9,4)=&\frac{1}{2048\left(6 m+2 \sqrt{3}+3 \csc \left(\pi/9\right)+3 \csc \left(2 \pi/9\right)+3 \sec \left(\pi/18\right)\right)}\left[3 \csc ^3\left(\pi/18\right) \sec ^6\left(\pi/18\right) \sec ^3\left(\pi/9\right)\right.\\
    &\left(9 m\left(1+\sin \left(\pi/18\right)+2 \cos \left(2 \pi/9\right)\right)+27 \sqrt{3}+10 \sin \left(\pi/9\right)+40 \sin \left(2 \pi/9\right)+3 \sqrt{3} \sin \left(\pi/18\right)\right.\\
    &\left.\left.+77 \cos \left(\pi/18\right)+6 \sqrt{3} \cos \left(2 \pi/9\right)\right)\right],
    \end{aligned}\nonumber
\eea
\bea
\check{\eta}_+(m,9,1)=\hat{\eta}_+(m,9,1)=9 \sqrt{m(m+9)}/(6 m+2 \sqrt{3}+3 \csc \left(\pi/9\right)+3 \csc \left(2 \pi/9\right)+3 \sec \left(\pi/18\right)),\nonumber
\eea
\bea
\check{\eta}_-(m,9,1)=\hat{\eta}_-(m,9,1)=-9 \sqrt{m(m+9)}/(6 m+2 \sqrt{3}+3 \csc \left(\pi/9\right)+3 \csc \left(2 \pi/9\right)+3 \sec \left(\pi/18\right)),\nonumber
\eea
\bea
\begin{aligned}
    \check{\eta}_{\pm}(m,9,2)=&\hat{\eta}_{\pm}(m,9,2)\\
    =&\frac{1}{512 \left(6 m+2 \sqrt{3}+3 \csc \left(\pi/9\right)+3 \csc \left(2 \pi/9\right)+3 \sec \left(\pi/18\right)\right)}
    \left[27 \sin ^2\left(\pi/18\right) \csc ^9\left(\pi/9\right)\sec ^3\left(\pi/9\right)\right.\\
    &\left.\cdot\left(m\left(18 \sin \left(\pi/9\right)+10 \sqrt{3} \sin \left(\pi/18\right)-9 \cos \left(\pi/18\right)\right)-3-\sin \left(\pi/18\right)+2 \cos \left(\pi/9\right)\right)\right],
    \end{aligned}\nonumber
\eea
\bea
\begin{aligned}
    \check{\eta}_{\pm}(m,9,3)=&\hat{\eta}_{\pm}(m,9,3)\\
    =&\frac{1}{8\left(6 m+2 \sqrt{3}+3 \csc \left(\pi/9\right)+3 \csc \left(2 \pi/9\right)+3 \sec \left(\pi/18\right)\right)}
    \left[3\left(24 m-3 \csc ^3\left(2 \pi/9\right)-3 \sec ^3\left(\pi/18\right)\right.\right.\\
    &\left.\left.+8 \sqrt{3}-3\left(1+2 \cos \left(2 \pi/9\right)\right) \csc ^3\left(\pi/9\right)+6 \cos \left(\pi/9\right) \sec ^3\left(\pi/18\right)-6 \sin \left(\pi/18\right) \csc ^3\left(2 \pi/9\right)\right)\right],
    \end{aligned}\nonumber
\eea
\bea
\begin{aligned}
    \check{\eta}_{\pm}(m,9,4)=&\hat{\eta}_{\pm}(m,9,4)\\
    =&\frac{1}{2048\left(6 m+2 \sqrt{3}+3 \csc \left(\pi/9\right)+3 \csc \left(2 \pi/9\right)+3 \sec \left(\pi/18\right)\right)}\left[9 \csc ^3\left(\pi/18\right) \sec ^6\left(\pi/18\right) \sec ^3\left(\pi/9\right)\right.\\
    &\left.\cdot\left(3 m\left(1+\sin \left(\pi/18\right)+2 \cos \left(2 \pi/9\right)\right)
    -2 \left(6 \sqrt{3}+5 \sin \left(\pi/9\right)+14 \sin \left(2 \pi/9\right)+19 \cos \left(\pi/18\right)\right)\right)\right].
    \end{aligned}\nonumber
\eea
Let
\bea
\begin{aligned}
\Xi(m)=\max\left\{\zeta(\check{\alpha}_{\pm}(m,9,l),\check{\eta}_{\pm}(m,9,l)),\zeta(\hat{\alpha}_{\pm}(m,9,l),\hat{\eta}_{\pm}(m,9,l)),\ l=1,2,3,4\right\},
\end{aligned}\nonumber
\eea
with 
$$Q_{max}(9)=\csc ^3\left(\pi/18\right) \sec ^6\left(\pi/18\right) \sec ^3\left(\pi/9\right)\left(6 \sqrt{3}+5 \sin \left(\pi/9\right)+14 \sin \left(2 \pi/9\right)+19 \cos \left(\pi/18\right)\right)/2048\approx 4.9047,$$ we have
\bea
\Xi(m)=|3-(\hat{\alpha}_+(m,9,1)+\hat{\eta}_+(m,9,1))|
=\hat{\alpha}_+(m,9,1)+\hat{\eta}_+(m,9,1)-3,\nonumber
\eea
holds for $(m,e)\in (2Q_{max}(9),+\infty)\times [0,1)$.
By definition of $\mathcal{ST}_9$, we know that
\bea
\mathcal{ST}_9=\left\{(m,e)\in (0,\infty)\times [0,1)\ |\ \Xi(m)<1/\sqrt{\tilde{g}_-(e)}\right\},\nonumber
\eea
so we obtain
\bea
\begin{aligned}\label{EE1+9}
\mathcal{EE}_{(1+9)}&=\left\{(m,e)\in (2Q_{max}(9),+\infty)\times [0,1)\ |\ \Xi(m)<1/\sqrt{\tilde{g}_-(e)}\right\}\\
&=\left\{(m,e)\in (2Q_{max}(9),+\infty)\times [0,1)\ |\ \hat{\alpha}_+(m,9,1)+\hat{\eta}_+(m,9,1)-3<1/\sqrt{\tilde{g}_-(e)}
\right\}.
\end{aligned}\nonumber
\eea
The region is shown in Figure \ref{1+9.jpg}.

\hfill\newline
\noindent{\bf Acknowledgement.}
The authors thank Prof. Alain Abouy, from whose lecture notes on the two-body problem we learned about Kepler's first integrals. The authors would also like to thank Pro.Lei Zhao for providing us with the Chinese translation of Albouy's lecture notes.
This work is partially supported by the National Key R\&D Program of China (2020YFA0713303).
Y.Ou is partially supported by NSFC ($\sharp$12371192), the Young Taishan
Scholars Program of Shandong Province ($\sharp$ tsqn202312055), and the Qilu Young
Scholar Program of Shandong University.





\begin{thebibliography}{99}

\bibitem[D64]{D64} J. M. A. Danby, The stability of the triangular Lagrangian points in the general problem of three
bodies, Astronomical Journal. 69 (1964) 294-296

\bibitem[D77]{D77} R.L. Devaney, Blue sky catastrophes in reversible and Hamiltonian systems, Indiana Univ. Math. J. 26 (1977)
247¨C263.

\bibitem[G43]{G43} M.Gascheau, Examen dune classe d\'{e}quations diff\'{e}rentielles et applicationa un cas particulier du
probleme des trois corps, Comptes Rendus. 16(1843)393-394.

\bibitem[G77]{G77}W.B.Gordon, A minimizing property of Kepler orbits, Amer. J. Math. 99 (1977) 961-971.

\bibitem[HO16]{HO16} X. Hu, Y. Ou, Collision index and stability of elliptic relative equilibria in planar
$n$-body problem. Comm. Math. Phys. 348 (3) (2016) 803-845.

\bibitem[HOW19]{HOW19} X.Hu, Y.Ou, P.Wang, Hill-type formula for Hamiltonian system with Lagrangian boundary
conditions, Journal of Differential Equations. 267(2019)2416-2447.

\bibitem[HLO20]{HLO20} X.Hu, Y.Long, Y.Ou, Linear stability of the elliptic relative equilibrium with
(1+n)-gon central configurations in planar n-body problem, Nonlinearity. 33(2020)1016-1045.

\bibitem[HLS14]{HLS14} X.Hu, Y.Long, S.Sun, Linear stability of elliptic Lagrangian solutions of the
planar three-body problem via index theory, Archive for Rational Mechanics and Analysis. 213(2014)993-1045.

\bibitem[HOW15]{HOW15} X.Hu, Y.Ou, P.Wang, Trace Formula for Linear Hamiltonian Systems with its
Applications to Elliptic Lagrangian Solutions, Archive for Rational Mechanics and Analysis. 216(2015)313-357.

\bibitem[HS10]{HS10} X.Hu, S.Sun, Morse index and stability of elliptic Lagrangian solutions in the
planar three-body problem, Advances in Mathematics.223(2010)98¨C119.

\bibitem[L02]{L02} Y.Long, Index Theory for Symplectic Paths with Applications, Progress in Math., Vol.207, Birkh\"{a}user.
Basel. 2002.

\bibitem[M83]{M83} J. C. Maxwell. Stability of the motion of Saturn's rings. S.Brush, C.W.F. Everitt, and E.Garber,
editors,Maxwell on Saturn's rings. MIT Press, Cambridge,1983.

\bibitem[M92]{M92} R. Moeckel, Linear Stability Analysis of Some Symmetrical Classes of Relative Equilibria.
Hamiltonian dynamical systems (Cincinnati, OH,1992),291-317,IMA Vol.Math Appl. 63. Springer, New York,1995.

\bibitem[MSS06]{MSS06} R.Mart\'{\i}nez, A.Sam\`{a}, C.Sim\'{o}, Stability diagram for 4D linear periodic systems with applications
to homographic solutions, Journal of Differential Equations. 226(2006)619-651.

\bibitem[MS05]{MS05} K.R. Meyer, D.S.Schmidt, Elliptic relative equilibria in the N-body problem,
Journal of Differential Equations. 214(2005) 256-298..

\bibitem[OS22]{OS22} Y.Ou, J.Sun, Linear stability of the elliptic relative equilibrium with (1 + 7)-gon cen-
tral configuration in planar N-body problem, Discrete and Continuous Dynamical Systems. 43.2(2023)559-567.

\bibitem[R98]{R98} G. E. Roberts, Linear stability in the $1+N$-gon relative equilibrium,
http://mathcs.holycross.edu/~grob\\erts/Papers/HAMSYS-98.pdf

\bibitem[VK07]{VK07} R. J. Vanderbei, E. Kolemen,  Stability of ring systems, The Astronomical Journal, 133:656-664, 2007.

\bibitem[ZL17]{ZL17} Q.Zhou and Y.Long, Maslov-Type Indices and Linear Stability of Elliptic Euler Solutions of the Three-Body Problem, Archive for Rational Mechanics and Analysis. 226 (2017) 1249-1301.


\end{thebibliography}
\end{document}